\DeclareMathOperator{\id}{id}
\newtheorem{thm}{Theorem}[section]
\newtheorem{prop}[thm]{Proposition}
\newtheorem{lem}[thm]{Lemma}
\theoremstyle{definition}
\newtheorem{defn}[thm]{Definition}
\newtheorem{ex}[thm]{Example}
\theoremstyle{remark}
\newtheorem{rem}[thm]{Remark}
\newcommand{\kk}{\mathbb K}
\newcommand{\brr}[1]{\left[#1\right]}
\newcommand{\ds}{\displaystyle}
\newcommand{\al}{\alpha}
\newcommand{\be}{\beta}
\newcommand{\A}{A}                      
\renewcommand{\gg}{\mathfrak{g}}        
\newcommand{\hh}{\mathfrak{h}}          
\begin{document}

\title{Universal Algebra of a Hom-Lie Algebra and group-like elements}

\author{Camille Laurent-Gengoux, Abdenacer Makhlouf and Joana Teles }
\address{\newline C. Laurent-Gengoux : Institut Elie Cartan de Lorraine, UMR 7122,
Universit\'e de Lorraine,
57045 Metz, France  \newline  A. Makhlouf : Laboratoire de Math\'ematique, Informatique et Applications, Universit\'e de Haute-Alsace, 68093 Mulhouse, France \newline J. Teles : CMUC, Departament of Mathematics, University of Coimbra, P-3001-454 Coimbra, Portugal}
\thanks{The first and last authors were supported by the Centro de Matemática da
Universidade de Coimbra (CMUC), funded by the European Regional
Development Fund through the program COMPETE and by the Portuguese
Government through the FCT - Fundação para a Ciência e a Tecnologia
under the project PEst-C/MAT/UI0081/2013. }

\begin{abstract}
We construct the universal enveloping algebra of a Hom-Lie algebra and endow it with a Hom-Hopf algebra structure. We discuss group-like elements that we see as a Hom-group integrating the initial Hom-Lie algebra.
\end{abstract}

\maketitle

\tableofcontents

\section*{Introduction}

Hom-Lie algebras are  a generalization of  Lie algebras, formalizing an algebraic structure which appeared first in quantum deformations of Witt and Virasoro algebras, see for example \cite{AizawaSaito,ChaiIsKuLuk,CurtrZachos1,Kassel1,Hu}. A quantum deformation or a $q$-deformation of an algebra of vector fields is obtained when replacing a usual derivation by a $\sigma$-derivation $d_\sigma$ that satisfies a twisted Leibniz rule $d_\sigma(f g)=d_\sigma(f)g+\sigma(f) d_\sigma(g) $, where $\sigma$ is an algebra endomorphism of a commutative associative algebra. An example of  a $\sigma$-derivation is the Jackson derivative on polynomials in one variable. A general construction of quasi-Lie algebras and the introduction of Hom-Lie algebras were given in \cite{HLS}. The corresponding  associative algebras, called Hom-associative algebras, were introduced in \cite{MS}, where it is shown that they are Hom-Lie admissible,  while the enveloping algebra of a Hom-Lie algebra was constructed in \cite{Yau08}. Moreover, Hom-bialgebras and Hom-Hopf algebras were studied in \cite{MS2009,MS2010a,Yau4}. Further results could be found in \cite{AM,AEM,BEM,E,LS,MS2,MS2010,Sh,Yau09,Yau10}.

The purpose of this article is two-fold. First, we construct explicitly the universal enveloping algebra of a multiplicative Hom-Lie algebra and show that it is a Hom-Hopf algebra. Then, we intend to mimic the construction of a Lie group integrating a Lie algebra $\gg$ obtained by choosing, as a candidate for integrating the Lie algebra $\gg$, group-like elements of the universal enveloping algebra $ {\mathcal U}\gg$.

This task, in particular the second step, is not trivial, and forced us to reconsider the way antipodes are generally defined on bialgebras, as well as the definition of invertible elements on Hom-groups.  Still, we are able to present a Hom-group that we claim to be the integration of a Hom-Lie algebra, but it is more involved than simply group-like elements in the  universal enveloping algebra of a Hom-Lie algebra.
Before describing our construction step by step, let us discuss what inverse means in the context of Hom-algebras.

\subsection*{Invertibility and inverse on Hom-algebras or Hom-groups}

There is one natural manner, which was already considered by \cite{Yau08}, to construct the universal enveloping algebra $ {\mathcal U}\gg$ of a multiplicative Hom-Lie algebra $\gg$. This object is, as expected, a Hom-bialgebra. But, as we shall see while  proving Theorem \ref{thm:HopfAlgOnTrees}, it turns out \emph{not} to be a Hom-Hopf algebra in the sense of \cite{MS2010a}, because the antipode is not an inverse of the identity map for the convolution product.

This failure is, however, productive, in the sense that it paves the way for what seems to be a definition of a Hom-Hopf algebra suitable in this context. Let us explain the situation. In the Hom-bialgebra $ ({\mathcal U}\gg,\vee, \alpha,\Delta, \eta, \epsilon)$  the usual  antipode $S$ (for Hopf algebra structure) does not satisfy the axiom:
$$ S \star id = id \star S =   \eta \circ \epsilon $$
with $\star $ being the convolution product, defined by $S \star T = \vee \circ (S \otimes T) \circ \Delta $
for two arbitrary linear endomorphisms $S,T$  of ${\mathcal U}\gg$, as in the classical case. The antipode $S$ only satisfies a weakened condition:    for any $ x \in  {\mathcal U}\gg$, there exists an integer $ k  \in {\mathbb N}$ such that:
   \begin{equation}\label{eq:weaker} \al^k \circ \vee \circ (S \otimes id) \circ \Delta \, (x)  = \al^k \circ \vee \circ (id \otimes S) \circ \Delta \, (x) = \eta \circ  \epsilon\, (x)  .
			\end{equation}
The smallest such integer $k$ is called the invertibility index of $x$.
We define Hom-Hopf algebras as bialgebras satisfying this weakened condition.

This definition is indeed not surprising. Given a Hom-associative algebra $ ( {\A} , \vee , \al )$  that admits a unit ${\mathds{1}} $, it is tempting to define invertible elements as being elements $x$ in $ {\A}$  such that there exists $ y \in {\A}$ with $ x \vee y=y \vee x= {\mathds{1}}$.
Alternatively, it may be tempting to define invertible elements to be those elements $x$ in $ {\A}$ such that there exists $ y \in {\A}$ with $ \al(x \vee y)=\al(y \vee x)= {\mathds{1}}$
as in \cite{F}. However, there is an issue with both definitions: invertible elements in any of these two senses are in general not stable under $\vee $.

In order to get a notion of invertible elements that would allow those to be invariant under $\vee $, we  say that an element $x \in {\A}$ is \textbf{hom-invertible} if and only if there exists $y \in {\A}$ (not necessarily unique) called a \textbf{hom-inverse} and an integer $k \in {\mathbb N}$ such that
$$ \al^k ( x \vee y ) = \al^k ( y \vee x ) = {\mathds{1}} .$$
This definition is consistent with Equation (\ref{eq:weaker}). The antipode $S$ that we have constructed on the Hom-bialgebra $ ({\mathcal U}\gg,\vee, \Delta, \eta, \epsilon)$ becomes now a kind of hom-inverse of the identity for the convolution product, making $ ({\mathcal U}\gg,\vee, \Delta, \eta, \epsilon, S)$ a Hom-Hopf algebra.

This issue being solved, we intend to find a Hom-group integrating a Hom-Lie algebra. Having modified the definition of a hom-inverse, we have to modify the definition of Hom-Lie group accordingly.

\begin{defn}\label{def:ourHomGroup}
A \textbf{Hom-group} is a set $(G,\vee,\al,{\mathds{1}})$ equipped with a Hom-associative product with unit ${\mathds{1}} $ and an anti-morphism $g \to g^{-1}$ such that, for any $g \in G$, there exists an integer $k \in {\mathbb N}$ satisfying
$$ \al^k ( g\vee g^{-1}) = \al^k ( g^{-1} \vee g ) = {\mathds{1}} .$$
The smallest such integer $k$ is called the invertibility index of $g$.
\end{defn}

\subsection*{From Hom-Lie algebras to Hom-groups}

Group-like elements in a Hom-Hopf algebra $ {\A}$ (equipped with an antipode satisfying the weakened assumption (\ref{eq:weaker})) form a Hom-group (group-like elements being defined as formal series $ g(\nu )\in  {\A}[[\nu]]$
with some assumptions). It is therefore tempting to define the object integrating the Hom-Lie algebra $ \gg$ as being the set of group-like elements in the Hom-Hopf algebra $ {\mathcal U}\gg[[\nu]]$. However, this definition is irrelevant: there is in general very little group-like elements in $ {\mathcal U}\gg[[\nu]]$, except for the  unit $ {\mathds{1}}$ itself.

It is possible, fortunately, to go around this difficulty by defining, for all $p \in {\mathbb N}$, {$p$-order formal group-like elements} as being elements in $ {\mathcal U}\gg[[\nu]]$  satisfying
$ g(0)={\mathds{1}} $ and:
 $$ \Delta  g(\nu) =  g(\nu) \otimes  g(\nu) \hspace{1cm} \left[\nu^{p+1}\right]$$
(where $\left[\nu^{p+1}\right]$ means "modulo $ \nu^{p+1} $").
It is routine to check that $p$-order formal group-like element do form a Hom-group, with inverse given by the antipode.
Then we consider sequences $ (g_p(\nu))_{p \geq 0}$, with $g_p(\nu)$ a $p$-order formal group-like element,
such that the quotient of $ g_{p+1} (\nu)$ modulo $ \nu^{p+1}$ is $\al ( g_p(\nu) )  $  for all $p \in {\mathbb N}$.
 We call these sequences {formal group-like sequences} when their invertibility index is bounded. We show that formal group-like sequences do form a Hom-group, with inverse again induced by the antipode. Moreover an exponential map valued in formal group-like sequence can be constructed making this Hom-group a reasonable candidate for being considered as a Hom-group integrating the Hom-Lie algebra $ \gg$, as several theorems will show at the end of this article.


\section{Hom-Lie algebras, Hom-associative algebras and Hom-bialgebras}

Given $\gg$ a vector space and a bilinear map $ \brr{\, , \, }: \gg \otimes \gg \to \gg$, by \textbf{endomorphism of
$(\gg,\brr{\, , \, } )$}, we mean a linear map $\al: \gg \to \gg$ such that
$$
\al (\brr{x,y}) = \brr{\al (x), \al (y)}
$$
for all $x,y \in \gg$. We now define Hom-Lie algebras, sometimes called multiplicative Hom-Lie algebras.

\begin{defn}\label{def:hom:Lie:algebra} \cite{HLS}
A \textbf{Hom-Lie algebra} is a triple  $(\gg, \brr{\, , \, }, \al)$ with $\gg$ a vector space equipped with a
skew-symmetric bilinear map $ \brr{\, , \, }:\gg \otimes \gg \to \gg$ and an endomorphism
$\al$ of $(\gg,\brr{\, , \, })$ such that:
\begin{equation}
\brr{\al (x),\brr{y,z}}+\brr{\al (y),\brr{z,x}}+\brr{\al
(z),\brr{x,y}}=0, \quad \forall x,y,z\in\gg  \quad \hbox{(Hom-Jacobi identity)}. \label{eq:hom:Jacobi:algebra}
\end{equation}
A \textbf{morphism} between Hom-Lie algebras $(\gg,\brr{\, , \, }_\gg,\al)$ and $(\hh,\brr{\, , \, }_\hh,\be)$ is a linear map $\psi:\gg \to \hh$
such that $\ds \psi(\brr{x,y}_\gg)=\brr{\psi( x),\psi (y)}_\hh$ and $\psi (\al(x))=\be(\psi(x))$
for all $x,y \in \gg$.
When $\hh$ is a vector subspace of $\gg$ and $\psi$ is the inclusion map, one speaks of \textbf{Hom-Lie subalgebra}.
\end{defn}

\begin{rem}\label{Rmk:IdealAndInverse}
Let $(\gg, \brr{\, , \, }, \al)$ be a Hom-Lie algebra. The subspace ${\mathfrak k} \subset \gg$ of elements $x \in \gg$ such that there exists an integer $k$ with $\al^k (x)=0$ is a Hom-Lie subalgebra. It is even a Hom-Lie ideal, i.e. the quotient $\gg/{\mathfrak k} $ inherits a structure of Hom-Lie algebra. By construction, its induced morphism $\underline{\al}: \gg/{\mathfrak k} \to \gg/{\mathfrak k}$ is invertible. The induced bracket $\underline{\al}^{-1} \circ \underline{\brr{\, , \, }}$ is therefore a Lie algebra bracket, see \cite{G}. Also, the natural projection $\gg/{\mathfrak k} $ is a morphism of Hom-Lie algebra.
\end{rem}

\begin{defn} \label{hom-associative}\cite{MS}
A \textbf{Hom-associative algebra} is a triple $(\A, \mu, \al)$ consisting of a  vector space $\A$, a bilinear map $\mu: \A \otimes \A \to \A$ and an endomorphism $\al$ of $( \A, \mu) $ satisfying
$$
\mu(\al(x), \mu(y,z))= \mu (\mu(x,y), \al (z)), \quad \forall x,y,z \in \A \hbox{ (Hom-associativity)}.
$$
A Hom-associative algebra $(\A, \mu, \al)$ is called \textbf{unital} if there exists a linear map $\eta: \kk \to \A$  such that
$$
\mu \circ (id_{\A} \otimes \eta) = \mu \circ (\eta \otimes id_{\A}  )=  \al \hbox{ and } \al \circ \eta = \eta.
$$
We denote  a unital Hom-associative algebra by a quadruple $(\A, \mu, \al, \eta)$.  The unit element (or unit, for simplicity) is $\mathds{1}= \eta(1_{\kk})$.
\end{defn}

Notice that a Hom-associative algebra $(\A, \mu, \al)$ is unital, with unit $\mathds{1} \in \A$,
when $ \mu(x, \mathds{1})= \mu(\mathds{1},x) =\al(x) $ and $ \al(\mathds{1})=\mathds{1}$.

Morphisms between Hom-associative algebras are defined in the similar way as Hom-Lie algebras. For unital Hom-associative algebras, the image of the unit is a unit.
\begin{ex}\cite{Yau09,MS}
\label{ex:composition}
Given a vector space $\gg$ equipped with a bilinear map $ \brr{\, , \, }:\gg \otimes \gg
\to \gg$ and an endomorphism $\al:\gg\to \gg$ of $(\gg, \brr{\, , \, })$. Define
$ \brr{\, , \, }_{\al}:\gg \otimes \gg
\to \gg$ by
$$ \brr{x,y}_\al=\al (\brr{x,y}), \quad \hbox{ $\forall x,y \in \gg$.} $$
 Then $(\gg, \brr{\, , \, }_{\al}, \al)$ is a Hom-Lie algebra (resp. a Hom-associative algebra, resp. unital Hom-associative algebra) if and only if the restriction of $\brr{\, , \, }$
to the image of $\al^2 $ is a Lie bracket (resp. an associative product, resp. a unital associative product).
In particular, Hom-Lie structures are naturally associated to Lie algebras equipped with a Lie algebra endomorphism \cite{Yau09}.
Such Hom-Lie structures are said to be \textbf{obtained by composition} or \textbf{Twisting principle}.
\end{ex}

\begin{ex} \label{ex:composition2}
As one can expect, the commutator of a Hom-associative algebra is a Hom-Lie algebra
\cite{MS}. More precisely, for every Hom-associative algebra $(\A, \mu, \al)$ (see Definition \ref{hom-associative} above), the triple $(\A, \brr{\, , \, }, \al)$  is a Hom-Lie algebra, where
$$
\brr{x,y}:= \mu(x,y)- \mu (y,x)
$$
for all $x,y \in \A$.
\end{ex}

\begin{defn}\label{def:our_inverse}
An element $x$ in a unital Hom-associative algebra $(\A, \mu, \al,\mathds{1}) $ is said to be \textbf{hom-invertible} if there exists an element $x^{-1}$ and a non-negative integer $k \in {\mathbb N}$, such that
\begin{equation}\label{eq:def_inverse}
\al^k \circ \mu(x, x^{-1})= \al^k \circ \mu(x^{-1}, x) =  \mathds{1}.
\end{equation}
 The element $x^{-1}$ is  called a \textbf{hom-inverse} and the smallest $k$ is the \textbf{invertibility index} of $x$.
\end{defn}
When it exists, the hom-inverse may not be unique, which prevents hom-invertible elements to be a Hom-group in the sense of Definition \ref{def:ourHomGroup}. However, the following can be shown.

\begin{prop}\label{prop:stableByProduct}
For every unital Hom-associative algebra $(\A, \mu, \al,\mathds{1}) $, the unit $\mathds{1}$ is  hom-invertible,
the product of any two hom-invertible elements is hom-invertible and every inverse of a hom-invertible element is hom-invertible.
 \end{prop}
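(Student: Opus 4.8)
The plan is to dispose of the first and third assertions in a line each and then concentrate on stability under $\mu$. For the unit, $\mu(\mathds{1},\mathds{1}) = \al(\mathds{1}) = \mathds{1}$, so the defining identity (\ref{eq:def_inverse}) of Definition \ref{def:our_inverse} is satisfied by $x = x^{-1} = \mathds{1}$ already with $k = 0$; hence $\mathds{1}$ is hom-invertible of invertibility index $0$. For the third assertion, if $y$ is a hom-inverse of $x$, so that $\al^{k}\circ\mu(x,y) = \al^{k}\circ\mu(y,x) = \mathds{1}$ for some $k$, then the same pair of equalities, with the roles of $x$ and $y$ interchanged, exhibits $x$ as a hom-inverse of $y$; hence every hom-inverse of a hom-invertible element is itself hom-invertible.

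For the product, let $x_{1},x_{2}$ be hom-invertible, with hom-inverses $y_{1},y_{2}$ and invertibility indices $k_{1},k_{2}$, and set $k := \max(k_{1},k_{2})$. The first step is a normalisation to a common twist: applying $\al^{\,k-k_{i}}$ to (\ref{eq:def_inverse}) and using that $\al$ is an endomorphism of $(\A,\mu)$ with $\al(\mathds{1}) = \mathds{1}$ yields $\al^{k}\circ\mu(x_{i},y_{i}) = \al^{k}\circ\mu(y_{i},x_{i}) = \mathds{1}$ for $i = 1,2$. Writing $a_{i} := \al^{k}(x_{i})$ and $b_{i} := \al^{k}(y_{i})$ and using that $\al^{k}$ commutes with $\mu$, this turns into the clean relation $\mu(a_{i},b_{i}) = \mu(b_{i},a_{i}) = \mathds{1}$. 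The candidate hom-inverse of $\mu(x_{1},x_{2})$ is $\mu(y_{2},y_{1})$ — with the order of the factors reversed. Since $\al^{k}$ is an endomorphism of $(\A,\mu)$, one computes $\al^{k+1}\bigl(\mu(\mu(x_{1},x_{2}),\mu(y_{2},y_{1}))\bigr) = \al\bigl(\mu(\mu(a_{1},a_{2}),\mu(b_{2},b_{1}))\bigr)$, and likewise with the two factors exchanged; so it suffices to prove the two identities
\[
 \al\bigl(\mu(\mu(a_{1},a_{2}),\mu(b_{2},b_{1}))\bigr) = \mathds{1} \qquad\text{and}\qquad \al\bigl(\mu(\mu(b_{2},b_{1}),\mu(a_{1},a_{2}))\bigr) = \mathds{1},
\]
which will also produce the bound $k+1$ on the invertibility index of $\mu(x_{1},x_{2})$.

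The heart of the argument is then this short computation. Distribute the outer $\al$ across $\mu$ (using that $\al$ is an endomorphism) and then apply Hom-associativity (Definition \ref{hom-associative}) twice, so as to make the product $\mu(a_{2},b_{2})$ (respectively $\mu(b_{1},a_{1})$) appear; replacing it by $\mathds{1}$ and using $\mu(c,\mathds{1}) = \al(c)$ together with $\al(\mathds{1}) = \mathds{1}$ collapses the expression to $\al^{2}\mu(a_{1},b_{1}) = \al^{2}(\mathds{1}) = \mathds{1}$ in the first case and to $\al^{2}\mu(b_{2},a_{2}) = \al^{2}(\mathds{1}) = \mathds{1}$ in the second. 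This establishes that $\mu(x_{1},x_{2})$ is hom-invertible with hom-inverse $\mu(y_{2},y_{1})$.

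I expect the difficulty to be purely bookkeeping rather than conceptual: one must (i) reverse the order of the factors when guessing the hom-inverse of a product, (ii) carry out the common-twist normalisation first, since hom-invertibility only provides \emph{some} index $k$, and (iii) keep exact track of the powers of $\al$ — the subtle point being that $\mathds{1}$ is not a strict unit but acts as $\al$, so each re-association and each use of $\mu(a_{i},b_{i}) = \mathds{1}$ costs a twist, and the single extra application of $\al$ at the end (the ``$+1$'' in $k+1$) is genuinely unavoidable. No input beyond Hom-associativity and the unitality axioms of Definition \ref{hom-associative} is needed.
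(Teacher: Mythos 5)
Your proof is correct and follows essentially the same route as the paper: the hom-inverse of $\mu(x_1,x_2)$ is taken to be the reversed product $\mu(y_2,y_1)$, and two applications of Hom-associativity collapse the inner factor $\mu(a_2,b_2)$ (resp.\ $\mu(b_1,a_1)$) to $\mathds{1}$, after which unitality finishes the computation. The only difference is your preliminary normalisation to a common index $k=\max(k_1,k_2)$, which yields the slightly sharper bound $k+1$ on the invertibility index where the paper works with the two indices separately and obtains $k_1+k_2+1$.
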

\begin{proof}
 The only non-trivial point is that $\mu(x , x')$ is a hom-invertible element if both $x$ and $x'$ are, i.e.
 if there exists $y,y' \in \A$, $k,k' \in {\mathbb N}$ such that
    $$\al^k \circ \mu(x, y)= \al^k \circ \mu(y,x) = \al^{k'} \circ \mu(x', y')= \al^{k'} \circ \mu(y', x')  =  \mathds{1}.$$
 Hom-associativity implies
  $$ \al \circ \mu(\mu(x,x') , \mu(y',y))  = \mu( \al^2(x) , \mu(\mu(x',y'), \al(y) ) .
  $$

  So that
  \begin{eqnarray*} \al^{k+k'+1}  \circ \mu(\mu(x,x') , \mu(y',y)) & = & \al^k (   \mu( \al^{k'+2}(x) , \mu(\al^{k'} \mu(x',y'), \al^{k'+1}(y)  )
  \\ & =&  \al^k (   \mu( \al^{k'+2}(x) , \mu(\mathds{1}, \al^{k'+1}(y)  ) \\
    & =&  \al^k (   \mu( \al^{k'+2}(x) ,  \al^{k'+2}(y)  ) \\ &=&  \al^{k+k'+2} (   \mu( x , y  ) ) \\
    &=& \al^{k'+2}(\mathds{1}) = \mathds{1}.
  \end{eqnarray*}
 This completes the proof.
\end{proof}

\begin{rem}
For every unital Hom-associative algebra $(\A, \mu, \al,\mathds{1}) $, the subspace ${\mathfrak k}$ of all elements $x \in \A$ such that $ \al^k (x)=0$ for some integer $k \in {\mathbb N}$ is an ideal, i.e. the quotient map
$\A/{\mathfrak k}$ is  a unital Hom-associative algebra for which the induced map $\underline{\al}$ is invertible
 for the  induced product $ \underline{\mu}$.
In particular $ \A/{\mathfrak k}$ equipped with the product $\underline{\al}^{-1} \circ \underline{\mu} $ is an algebra.

An element $x\in \A$ is invertible in $(\A, \mu, \al,\mathds{1}) $ if and only if its image in $\A/{\mathfrak k}$ is invertible in the usual sense, which gives an alternative proof of Proposition \ref{prop:stableByProduct}.
\end{rem}

We now recall the notion of Hom-coalgebras.



\begin{defn} \cite{MS}
A \textbf{Hom-coalgebra} is a triple $(A, \Delta, \beta)$ where $A$ is a  vector space  and  $\Delta: A \to A\otimes A$, $\beta: A \to A$  are linear maps.

A \textbf{Hom-coassociative coalgebra} is a Hom-coalgebra $(A, \Delta, \beta)$ satisfying
$$
(\beta \otimes \Delta) \circ \Delta=(\Delta \otimes \beta) \circ \Delta.
$$
A Hom-coassociative coalgebra is said to be \textbf{co-unital}  if there exists a linear map $\epsilon: A \to \kk$ satisfying
$$
(id \otimes \epsilon) \circ \Delta= \beta \hbox{, }  (\epsilon \otimes id) \circ \Delta = \beta
 \hbox{ and }   \epsilon \circ \beta  = \epsilon.
$$
We refer to a counital Hom-coassociative coalgebra with a quadruple $(A,\Delta, \beta, \epsilon)$.
\end{defn}

Let $(A, \Delta, \beta)$ and $(A', \Delta', \beta')$ be two Hom-coalgebras (resp. Hom-coassociative algebras). A linear map $f: A \to A'$ is a morphism of Hom-coalgebras (resp. Hom-coassociative coalgebras) if
$$
(f \otimes f) \circ \Delta = \Delta' \circ f \qquad f \circ \beta = \beta' \circ f.
$$
It is said to be a weak Hom-coalgebras morphism if it holds only the first condition. If furthermore the Hom-coassociative coalgebras admit counits $\epsilon$ and $\epsilon'$, we have moreover $\epsilon = \epsilon' \circ f$.

The category of coassociative Hom-coalgebras is closed under weak Hom-coalgebra morphisms.

\begin{ex}
\cite{MS2009}
The dual of a Hom-algebra $(A,\mu,\al)$ is not always a Hom-coalgebra, because the coproduct does not land in the good space $\mu^*: A^* \to (A \otimes A)^* \supsetneq A^* \otimes A^*$.
Nevertheless, it is the case if the Hom-algebra is finite-dimensional, since $(A \otimes A)^*= A^*  \otimes A^*$.
The converse always  holds true. Let $(A, \Delta, \beta)$ be a Hom-coassociative coalgebra. Then its dual vector space is provided with a structure of Hom-associative algebra $(A^*, \Delta^*, \beta^*)$ where $\Delta^*$, $\beta^*$ are the transpose maps.
Moreover, the Hom-associative algebra is unital whenever $A$ is counital.

\cite{MS2010a} Let $(A,\Delta, \beta, \epsilon)$ be a counital Hom-coassociative coalgebra and $\al: A \to A$ be a weak Hom-coalgebra morphism. Then $(A,\Delta_{\al}= \Delta \circ \al, \beta \circ \al, \epsilon)$ is a counital Hom-coassociative coalgebra.
In particular, let $(A,\Delta,\epsilon)$ be a coalgebra and $\beta: A \to A$ be a coalgebra morphism. Then $(A,\Delta_{\beta}, \beta, \epsilon)$ is a counital Hom-coassociative coalgebra
\end{ex}

\begin{defn} \cite{MS2009}
An $(\al,\beta)$-\textbf{Hom-bialgebra} (simply called Hom-bialgebra when there is no ambiguity) is a heptuple $(A,\mu,\al, \eta,\Delta, \beta, \epsilon)$ where
\begin{enumerate}
  \item[(i)] $(A,\mu,\al, \eta)$ is a Hom-associative algebra with unit $\eta$ and unit element $\mathds{1}$,
  \item[(ii)] $(A,\Delta, \beta, \epsilon)$ is a Hom-coassociative coalgebra with a counit $\epsilon$,
  \item[(iii)] the linear maps $\Delta$ and $\epsilon$ are compatible with the multiplication $\mu$ and the unit $\eta$, that is for $x,y \in A$
\begin{enumerate}
  \item $\ds{\Delta(\mu(x \otimes y))= \Delta(x) \cdot \Delta(y)= \sum_{(x)(y)} \mu(x_1 \otimes y_1) \otimes \mu(x_2 \otimes y_2)}$, where $\cdot$ denotes the multiplication on the tensor algebra $A \otimes A$,
  \item $\Delta(\mathds{1})= \mathds{1} \otimes \mathds{1}$,
  \item $\epsilon(\mathds{1})= 1$,
  \item $\epsilon(\mu(x \otimes y))= \epsilon(x) \epsilon(y)$,
  \item $\epsilon \circ \al(x)= \epsilon(x)$.
\end{enumerate}
\end{enumerate}
If $\al=\beta$ the $(\al,\al)$-Hom-bialgebra is denoted by the hexuple $(A, \mu, \eta, \Delta, \epsilon, \al)$.
\end{defn}

A Hom-bialgebra morphism is a linear endomorphism which is simultaneously a Hom-algebra and Hom-coalgebra morphism.

\begin{ex}\label{TwistingPrincipleBialgebra}
Let $(A,\mu, \eta,\Delta,\epsilon, \al)$ be a Hom-bialgebra and $\beta:A \to A$ be a Hom-bialgebra morphism. Then $(A, \mu_{\beta} = \beta \circ \mu, \eta,\Delta_{\beta} = \Delta \circ \beta, \epsilon, \beta \circ \al)$ is a Hom-bialgebra.
In particular, if $(A,\mu, \eta,\Delta, \epsilon)$ is a bialgebra and $\beta: A \to A$ is a bialgebra morphism then $(A, \mu_{\beta}, \eta,\Delta_{\beta}, \epsilon, \beta )$ is a Hom-bialgebra.
This construction method of Hom-bialgebra, starting with a given Hom-bialgebra or a bialgebra and a morphism, is called composition method or  twisting principle \cite{MS2010a}. We can also define an $(\al, \beta)$ twist. If $(A,\mu, \eta,\Delta, \epsilon)$ is a bialgebra and $\al, \beta: A \to A$ are bialgebra morphisms which commute, that is $\al \circ \beta = \beta \circ \al$,
then $(A, \mu_{\al}=\al \circ \mu, \al,\eta,\Delta_{\beta}=\Delta \circ \beta, \beta,\epsilon )$ is a Hom-bialgebra. In particular we can consider one of the morphisms equal to identity.
\end{ex}

\section{Hom-Hopf algebras}

The following theorem holds and the proof goes through a direct verification of the axioms.

\begin{thm} \label{conv-product}
\cite{MS2009,MS2010a}
Let $(A,\mu,\al, \eta,\Delta, \beta, \epsilon)$ be an $(\al, \beta)$-Hom-bialgebra. Then $({\rm Hom}(A,A),\star,\gamma)$  is a unital Hom-associative algebra, with $\star$ being the multiplication given by the convolution product defined by
$$
f\star g = \mu \circ (f \otimes g) \circ \Delta
$$
and $ \gamma$ being the homomorphism of ${\rm Hom}(A,A)$ defined by $\gamma(f)= \al \circ f \circ \beta$. The unit is $\eta \circ \epsilon$.
\end{thm}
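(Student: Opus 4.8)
The plan is to verify directly that $(\mathrm{Hom}(A,A),\star,\gamma)$ satisfies the axioms of a unital Hom-associative algebra with unit $\eta\circ\epsilon$: that $\gamma$ is an endomorphism for $\star$, that $\star$ is Hom-associative relative to $\gamma$, and that $(\eta\circ\epsilon)\star f=f\star(\eta\circ\epsilon)=\gamma(f)$ for every $f\in\mathrm{Hom}(A,A)$ while $\gamma(\eta\circ\epsilon)=\eta\circ\epsilon$. The one fact I would record before starting is that $\be$ is automatically a coalgebra morphism, i.e. $\Delta\circ\be=(\be\otimes\be)\circ\Delta$; this is obtained by applying $\id\otimes\id\otimes\epsilon$ to both sides of the Hom-coassociativity identity $(\be\otimes\Delta)\circ\Delta=(\Delta\otimes\be)\circ\Delta$ and invoking the counit relations $(\id\otimes\epsilon)\circ\Delta=\be$ and $\epsilon\circ\be=\epsilon$.

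For multiplicativity of $\gamma$ and the unit axioms I would simply unwind the definitions. One has $\gamma(f)\star\gamma(g)=\mu\circ(\al\otimes\al)\circ(f\otimes g)\circ(\be\otimes\be)\circ\Delta$, and since $\al$ is an algebra endomorphism ($\al\circ\mu=\mu\circ(\al\otimes\al)$) and $\be$ is a coalgebra morphism ($(\be\otimes\be)\circ\Delta=\Delta\circ\be$) this equals $\al\circ\mu\circ(f\otimes g)\circ\Delta\circ\be=\gamma(f\star g)$. For the unit, evaluating on $x\in A$ with $\Delta(x)=\sum x_1\otimes x_2$ gives $((\eta\circ\epsilon)\star f)(x)=\sum\epsilon(x_1)\,\mu(\mathds{1},f(x_2))=\sum\epsilon(x_1)\,\al(f(x_2))=\al\big(f(\textstyle\sum\epsilon(x_1)x_2)\big)=\al(f(\be(x)))=\gamma(f)(x)$, using $\mu(\mathds{1},-)=\al$ and $(\epsilon\otimes\id)\circ\Delta=\be$; the identity $f\star(\eta\circ\epsilon)=\gamma(f)$ is symmetric (now via $(\id\otimes\epsilon)\circ\Delta=\be$), and $\gamma(\eta\circ\epsilon)=\al\circ\eta\circ\epsilon\circ\be=\eta\circ\epsilon$ because $\al\circ\eta=\eta$ and $\epsilon\circ\be=\epsilon$.

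The substantive computation is Hom-associativity, $(f\star g)\star\gamma(h)=\gamma(f)\star(g\star h)$. Writing $\Delta(x)=\sum x_1\otimes x_2$ and iterating, the left-hand side at $x$ is $\sum\mu\big(\mu(f(x_{11}),g(x_{12})),\al(h(\be(x_2)))\big)$; Hom-associativity of $\mu$ in the form $\mu(\mu(a,b),\al(c))=\mu(\al(a),\mu(b,c))$ rewrites it as $\sum\mu\big(\al(f(x_{11})),\mu(g(x_{12}),h(\be(x_2)))\big)$, which is the value at $x$ of $\mu\circ(\id\otimes\mu)\circ\big((\al\circ f)\otimes g\otimes h\big)\circ(\Delta\otimes\be)\circ\Delta$. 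The same reduction applied to the right-hand side yields $\mu\circ(\id\otimes\mu)\circ\big((\al\circ f)\otimes g\otimes h\big)\circ(\be\otimes\Delta)\circ\Delta$ at $x$, and the two agree because $(\Delta\otimes\be)\circ\Delta=(\be\otimes\Delta)\circ\Delta$ is exactly Hom-coassociativity. Note that this step uses only Hom-associativity of $\mu$ and Hom-coassociativity of $\Delta$, not the morphism properties of $\al$ or $\be$.

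I expect the main obstacle to be purely clerical: in the Hom-associativity step one must keep track of which tensor factor of the twice-applied coproduct carries the extra $\be$ produced by the $\gamma$ on one side, so as to match the two sides against the Hom-coassociativity identity. The one mildly conceptual ingredient is the preliminary observation that $\be$ is a coalgebra morphism — not stated in the definition of Hom-bialgebra but a consequence of Hom-coassociativity together with the counit axioms — since this is precisely what makes $\gamma$ multiplicative for $\star$; everything else is symbol-pushing.
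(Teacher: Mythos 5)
Your proof is correct and takes the same route the paper indicates, namely a direct verification of the axioms (the paper itself supplies no details, deferring to \cite{MS2009,MS2010a}). Your preliminary observation that $\beta$ is automatically a weak coalgebra morphism, i.e.\ $(\beta\otimes\beta)\circ\Delta=\Delta\circ\beta$, which is exactly what makes $\gamma$ multiplicative for $\star$, is a worthwhile detail that the paper leaves implicit.
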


We now define Hom-Hopf algebra in a manner that differs from \cite{MS2009,MS2010a}. In those works, an antimorphism $S$ of $A$ is said to be an antipode if it is an inverse (in the usual sense) of the identity
over $A$ for the Hom-associative algebra ${\rm Hom} (A,A)$ with the multiplication given by the convolution product, i.e.
 $ S \star id =id \star S = \eta \circ  \epsilon $.
This definition matches examples given by twisting principle  out of a Hopf algebra but does not match our examples.
For this reason, and also in view of the Definition \ref{def:our_inverse} of an invertible element in the context of Hom-algebras, we prefer to define an antipode as being an anti-morphism $S$ of $A$ which is a relative hom-inverse of the identity, defined as follows. We say that $S \in {\rm Hom} (A,A)$ is a \textbf{relative hom-inverse} of $T \in {\rm Hom} (A,A)$ if and only if for every $x \in A$ there exists an integer $k$ (depending on $x$) such that:
$$ \al^{k} (S \star T ) \, (x) =  \al^k (T \star S ) \, (x)  = \eta \circ  \epsilon \, (x).$$

\begin{rem}
Notice that $S$ is not a hom-inverse of $T$ in the sense of Definition \ref{def:our_inverse}.
However,  if there exists an integer
$\bar{k}$ such that $ \al^{\bar{k}} (S \star T ) \, (x) =  \al^{\bar{k}} (T \star S ) \, (x)  = \eta \circ  \epsilon \, (x)$ for all $x\in A$, then   $S$ is a hom-inverse of $T$ and the smallest $\overline{k}$ is the invertibility index of $S$.\end{rem}


This amounts to the following definition:

\begin{defn} \label{def:hom-Hopf-algebra_in_our_sense}
Let $(A,\mu,\al, \eta,\Delta, \beta, \epsilon)$ be an $(\al, \beta)$-Hom-bialgebra.
An anti-homomorphism $S$ of $A$ is said to be an \textbf{antipode} if
\begin{enumerate}
\item[a)] $ S \circ \al = \al \circ S$,
\item[b)] $  S  \circ \eta= \eta $ and $  \epsilon  \circ S = \epsilon$,
\item[c)] $S$ is a relative Hom-inverse of the identity map $id :A \to A$ for the convolution product  given as in Theorem \ref{conv-product}.
\end{enumerate}
 An $(\al, \beta)$-\textbf{Hom-Hopf algebra} is an $(\al, \beta)$-Hom-bialgebra admitting an antipode.
\end{defn}

Recall that condition (c) means equivalently that, for every $x \in A $, there exists $ k \in {\mathbb N}$ such that:
 \begin{equation}\label{eq:antipode_condition}\al^k \circ( S \star id)  (x)=  \al^k \circ  (id \star S)  (x) = \eta \circ  \epsilon  (x).
  \end{equation}

Notice that we do not need to assume that $ S $ and $ \beta $ commute (in most examples,
$\beta$ is either the identity or coincides with $\al$).

\begin{ex} \label{Hom-Hopf-Twist}
Let  $(A,\mu, \eta,\Delta,  \epsilon, S)$ be a Hopf algebra, and $\al, \beta: A \to A$ be commuting bialgebra morphisms satisfying $S\circ \alpha=\alpha \circ S$.  Then
$$
(A, \mu_{\al}=\alpha\circ \mu,\al, \eta,\Delta_{\beta}=\Delta\circ\beta,  \beta,\epsilon ,S)
$$
is an $(\al, \beta)$-Hom-Hopf algebra, called the \textbf{$(\al,\beta)$-twist} of the Hopf algebra $A$.
More generally, the same idea turns a $(\al', \beta')$-Hom-Hopf algebra in a $(\al \circ \al', \beta \circ \beta')$-Hom-Hopf algebra.

Indeed, according to Example \ref{TwistingPrincipleBialgebra}, $
(A, \mu_{\al}=\alpha\circ \mu,\al, \eta,\Delta_{\beta}=\Delta\circ\beta,  \beta,\epsilon )
$ is a Hom-bialgebra. It remains to show that $S$ is still an antipode for the Hom-bialgebra. We have
$$S(\mu_\alpha(x,y))=S(\alpha(\mu(x,y)))=\alpha(S(\mu(x,y)))=\alpha(\mu(S(y),S(x)))=\mu_\alpha(S(y),S(x)),
$$
and
$$\mu_\alpha\circ(S\otimes \id)\circ\Delta_\beta=\alpha\circ\mu\circ(S\otimes \id)\circ\Delta\circ\beta=
\mu\circ(S\otimes \id)\circ\Delta\circ\alpha\circ\beta=\eta\circ\epsilon\circ\alpha\circ\beta=\eta\circ\epsilon,
$$
which complete the proof. The proof for the general case is similar.
\end{ex}

\begin{rem}
\label{rmk:SeveralPoints}
For every $(\al,\beta)$-Hom-Hopf algebra, the following properties hold:
\begin{enumerate}
 \item Using counitality, we have (in Sweedler's notation):
 $ \beta(x) = \sum x_1 \epsilon (x_2) = \sum \epsilon (x_1) \,  x_2$.
  \item Let $x$ be a primitive element (which means that $\Delta(x)= \mathds{1}\otimes x + x \otimes \mathds{1}$), then $\epsilon(x)= 0$.
  \item If $x$ and $y$ are two primitive elements in $A$, then we have $\epsilon(x)=0$ and the commutator $[x,y]= \mu(x \otimes y) - \mu (y \otimes x)$ is also a primitive element.
  \item The set of all primitive elements of $A$, denoted by ${\rm Prim}(A)$, admits a natural structure of Hom-Lie algebra, with bracket given by the commutator $ [x,y]:= \mu(x \otimes y) - \mu (y \otimes x)$, see \cite{MS2009,MS2010a}.
  \item If $x,y,z$ are  primitive elements in $A$, then the Hom-associator $\mu(\alpha(x),\mu(y,z))-\mu(\mu(x,y),\alpha(z))$ is a primitive element.
	 \item Using counitality and unitality, we have $ S \star (\eta \circ \epsilon) = \al \circ S \circ \beta =\gamma(S) $,
	and more generally $ (\al^p \circ S \circ \beta^q )\star (\eta \circ \epsilon) =  \al^{p+1} \circ S \circ \beta^{q+1} $.
	\item For all linear endomorphism $ S,T $ of $ A$, we have $\al ( S  \star T) = (\al \circ S) \star (\al \circ T) $.
	\item The antipode condition  (\ref{eq:antipode_condition}) can be stated as $ (\al^k \circ S) \star \al^k = \al^k \star (\al^k \circ S) = \eta \circ \epsilon$.
\end{enumerate}
\end{rem}

\begin{prop}
Let $(A,\mu,\al, \eta,\Delta, \beta, \epsilon) $ be an  $(\al,\beta)$-Hom-bialgebra. Assume that $S $ and $S'$ are two antipodes. Let $x\in A$ and  $k,k' \in {\mathbb N}$
  such that:
  $$  \al^{k} ( S \star \id )(x) =    \al^{k} ( \id \star S ) (x)=  \eta  \circ \epsilon(x) \hbox{ and }
	\al^{k'} ( S' \star \id )(x) =    \al^{k'} ( \id \star S' )(x) =  \eta  \circ \epsilon(x).$$
	Then, the following relation holds
 $$ \al^{K+2} \circ S \circ \beta^2(x) = \al^{K+2}  \circ S' \circ \beta^2 (x) $$
with $K ={\rm  max}(k,k')$.
\end{prop}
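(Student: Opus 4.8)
The plan is to transport the classical uniqueness-of-the-antipode argument into the Hom-associative algebra $(\mathrm{Hom}(A,A),\star,\gamma)$ of Theorem~\ref{conv-product}, in which $\gamma(f)=\al\circ f\circ\be$ and $\eta\circ\epsilon$ is the unit, while tracking the extra powers of $\al$ forced by the weakened antipode axiom.

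First I would reduce to $k=k'=K:=\max(k,k')$. Since $\al\circ\eta=\eta$ (so $\al(\mathds{1})=\mathds{1}$ and $\al\circ(\eta\circ\epsilon)=\eta\circ\epsilon$), applying $\al^{K-k}$ (resp.\ $\al^{K-k'}$) to the hypotheses yields
\[
\al^{K}(S\star\id)(x)=\al^{K}(\id\star S)(x)=\al^{K}(S'\star\id)(x)=\al^{K}(\id\star S')(x)=\eta\circ\epsilon\,(x).
\]
By Remark~\ref{rmk:SeveralPoints}(7), $\al^{K}(f\star g)=(\al^{K}\circ f)\star(\al^{K}\circ g)$, so writing $\sigma:=\al^{K}\circ S$ and $\sigma':=\al^{K}\circ S'$ (which commute with $\al$ by condition (a) of Definition~\ref{def:hom-Hopf-algebra_in_our_sense}), and noting $\al^{K}\circ\id=\al^{K}\in\mathrm{Hom}(A,A)$, these become
\[
(\sigma\star\al^{K})(x)=(\al^{K}\star\sigma)(x)=(\sigma'\star\al^{K})(x)=(\al^{K}\star\sigma')(x)=\eta\circ\epsilon\,(x).
\]
Since $\gamma^{2}(\sigma)=\al^{K+2}\circ S\circ\be^{2}$ and $\gamma^{2}(\sigma')=\al^{K+2}\circ S'\circ\be^{2}$, the claim is exactly $\gamma^{2}(\sigma)(x)=\gamma^{2}(\sigma')(x)$.

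The core step is the Hom-associativity of $\star$: $\gamma(f)\star(g\star h)=(f\star g)\star\gamma(h)$ for all $f,g,h$. I would apply this with $f=\sigma$, $g=\al^{K}$, $h=\sigma'$ and evaluate at $x$. Writing $\Delta(x)=\sum x_{1}\otimes x_{2}$, the left-hand side is $\sum\gamma(\sigma)(x_{1})\,(\al^{K}\star\sigma')(x_{2})$; replacing $\al^{K}\star\sigma'$ by $\eta\circ\epsilon$ and then using the unit axiom $\mu(\mathds{1}\otimes a)=\mu(a\otimes\mathds{1})=\al(a)$ together with the counit identity $\sum x_{1}\epsilon(x_{2})=\be(x)$ collapses it to $\al\bigl(\gamma(\sigma)(\be(x))\bigr)=\gamma^{2}(\sigma)(x)$. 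Symmetrically, the right-hand side $\sum(\sigma\star\al^{K})(x_{1})\,\gamma(\sigma')(x_{2})$ collapses, after replacing $\sigma\star\al^{K}$ by $\eta\circ\epsilon$, to $\gamma^{2}(\sigma')(x)$; equating the two proves the statement. (Were the antipode identities valid as endomorphisms rather than pointwise, this would be the one-line chain $\gamma^{2}(\sigma)=\gamma(\sigma)\star(\al^{K}\star\sigma')=(\sigma\star\al^{K})\star\gamma(\sigma')=\gamma^{2}(\sigma')$, combining the unit laws of Remark~\ref{rmk:SeveralPoints}(6) with Hom-associativity; the powers of $\al$ and the evaluation at $x$ are what let one run it with the weakened hypotheses only.)

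The step I expect to be the main obstacle is precisely the substitution just used: the weakened axiom controls $\al^{K}(\id\star S')(x)$ at the single element $x$, whereas in $\sum\gamma(\sigma)(x_{1})(\al^{K}\star\sigma')(x_{2})$ one wants to replace $\al^{K}\star\sigma'$ by $\eta\circ\epsilon$ at each Sweedler factor $x_{2}$ (and symmetrically for $S$ at $x_{1}$). One must therefore arrange the bookkeeping so that $K=\max(k,k')$ really suffices — for instance by appealing to a common (element-independent) invertibility index, or by re-bracketing so that the weakened identity is applied only at $x$ and the coproduct of $x$ is then absorbed through counitality and the unit axiom. With that point handled, the remainder — pushing $\al^{K}$ past $\star$, $\mu$ and $\Delta$, and the final collapse by counitality — is routine manipulation of the $(\al,\be)$-twists.
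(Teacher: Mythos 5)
Your proposal is correct and takes essentially the same route as the paper: the paper's own proof is the chain $\al^{k+2}S'\beta^2=(\al^{k+1}S'\beta)\star(\eta\circ\epsilon)=(\al^{k+1}S'\beta)\star(\al^k\star\al^kS)=(\al^kS'\star\al^k)\star(\al^{k+1}S\beta)=(\eta\circ\epsilon)\star(\al^{k+1}S\beta)=\al^{k+2}S\beta^2$, i.e.\ exactly your combination of the unit law $f\star(\eta\circ\epsilon)=\gamma(f)$, Hom-associativity of $\star$, the compatibility $\al^k(f\star g)=(\al^kf)\star(\al^kg)$, and substitution of the two antipode identities (your reduction to $k=k'=K$ versus the paper's assumption $k'\le k$ is immaterial). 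The ``main obstacle'' you flag --- that the hypotheses are pointwise at $x$ while the substitution inside a convolution is needed at the Sweedler components of $x$ --- is genuine, but the paper's proof performs those substitutions without comment, in effect reading the hypotheses as identities of maps, so your attempt is no less rigorous than the original.
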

\begin{proof}We assume that $k'\leq k$.
Recall that  for any $f$, $f\star \eta\circ \epsilon =\alpha \circ f\circ \beta$. For simplicity, we omit the composition circle.
\begin{align*}
\alpha^{k+2}S'\beta^2&=\alpha(\alpha^{k+1}S'\beta)\beta=(\alpha^{k+1}S'\beta)\star (\eta \epsilon)=(\alpha^{k+1}S'\beta)\star(\alpha^k(\id \star S))=(\alpha^{k+1}S'\beta)\star(\alpha^k \star \alpha^k S).
\end{align*}
By Hom-associativity we have
\begin{align*}
\alpha^{k+2}S'\beta^2&=(\alpha^{k}S'\star\alpha^k)\star(\alpha^{k+1} S\beta)
=(\alpha^{k-k'}(\alpha^{k'}(S'\star\id))\star(\alpha^{k+1} S\beta)
=(\alpha^{k-k'}\eta\epsilon))\star(\alpha^{k+1} S\beta).
\end{align*}
Since $\alpha\eta=\eta$ and $\epsilon\beta=\epsilon$, we have
\begin{align*}
\alpha^{k+2}S'\beta^2&
=(\eta\epsilon)\star(\alpha^{k+1} S\beta)=(\alpha^{k+1}\eta\epsilon\beta)\star(\alpha^{k+1} S\beta)=\alpha^{k+1}(\eta\epsilon\star S)\beta=\alpha^{k+2}S\beta^2.
\end{align*}
\end{proof}
\begin{rem}
This proposition means  that the antipode is in some sense unique.
Indeed, when $\al$ and $\beta$ are invertible, the antipode is unique when it exists.
\end{rem}

\section{Elements of group-like type in an  $(\al,\beta)$-Hom-Hopf algebra}
\label{sec:grouplike}

For $( {A},\mu,\al,\mathds{1},\Delta,\beta, \epsilon, S) $ an $(\al,\beta)$-Hopf-algebra, all the structural maps
extend by ${\mathbb K}[[\nu]] $-linearity to yield an $(\al,\beta)$-Hom-bialgebra structure on the space ${A}[[\nu]]$ of formal series with coefficients in $A$. However, it may not be an $(\al,\beta)$-Hom-Hopf algebra because
$S$ may not be a relative-inverse of the identity map.
However, for  formal series $g(\nu)=\sum_{i \geq 0} g_i \nu^i$  such that  the invertibility indexes of the elements $(g_i)_{i \in {\mathbb N}}$ are  bounded, there exits $k\in {\mathbb N}$ such that
$$ \al^k\circ(S\star \id )(g_i)= \al^k\circ(\id \star S )(g_i)=\eta \circ \epsilon (g_i).$$
Summing up these  relations, we obtain:

\begin{prop}
The space $A_{b}[[\nu]]$ of formal series $g(\nu)=\sum_{i \geq 0} g_i \nu^i$, such that  the invertibility indexes of the elements  $(g_i)_{i \in {\mathbb N}}$  are bounded,
is  an $(\al,\beta)$-Hom-Hopf algebra.
\end{prop}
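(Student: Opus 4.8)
The plan is to prove the statement in two steps. Since $A[[\nu]]$ is already known to carry an $(\al,\beta)$-Hom-bialgebra structure, it suffices to show: (A) $A_b[[\nu]]$ is a sub-$(\al,\beta)$-Hom-bialgebra of $A[[\nu]]$, i.e. it is stable under $\mu,\Delta,\eta,\epsilon,\al,\beta$ and under the ($\kk[[\nu]]$-linearly extended) antipode $S$ of $A$; and (B) the restriction of $S$ to $A_b[[\nu]]$ is an antipode in the sense of Definition~\ref{def:hom-Hopf-algebra_in_our_sense}.

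Step (B) is precisely the computation indicated before the statement. Given $g(\nu)=\sum_{i\ge 0}g_i\nu^i\in A_b[[\nu]]$, pick $k\in\mathbb N$ bounding the invertibility indexes of all the coefficients $g_i$; then $\al^k\circ(S\star\id)(g_i)=\al^k\circ(\id\star S)(g_i)=\eta\circ\epsilon(g_i)$ for every $i$, and summing against $\nu^i$ yields $\al^k\circ(S\star\id)(g(\nu))=\al^k\circ(\id\star S)(g(\nu))=\eta\circ\epsilon(g(\nu))$, which is condition (c). Conditions (a) and (b) of Definition~\ref{def:hom-Hopf-algebra_in_our_sense} hold in $A$ and are inherited coefficient by coefficient.

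For Step (A), write $\iota(x)$ for the invertibility index of $x\in A$, so that $A_b[[\nu]]=\{\sum_i g_i\nu^i:\sup_i\iota(g_i)<\infty\}$. The crucial point is that $\iota$ is sub-additive under the structure maps. One checks: (i) $\iota(x+y)\le\max(\iota(x),\iota(y))$ and $\iota(\lambda x)\le\iota(x)$ for $\lambda\ne 0$ (using $\al\circ\eta\circ\epsilon=\eta\circ\epsilon$), so that $A_b[[\nu]]$ is a $\kk[[\nu]]$-submodule, contains $\mathds{1}$, and is stable under $\eta$ and $\epsilon$; (ii) there is a constant $c$ with $\iota(\al(x)),\iota(\beta(x)),\iota(S(x))\le\iota(x)+c$, using that $\al$ and $\beta$ are compatible with $\Delta$, that $\epsilon\circ\al=\epsilon\circ\beta=\epsilon$, $S\circ\al=\al\circ S$, $\epsilon\circ S=\epsilon$, and the identity $\Delta\circ S=(S\otimes S)\circ\tau\circ\Delta$ ($\tau$ the flip), so that $A_b[[\nu]]$ is stable under $\al,\beta,S$; and, crucially, (iii) $\iota(\mu(x,y))\le\iota(x)+\iota(y)+c$. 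Granting (iii): if $\sup_i\iota(g_i)\le K$ and $\sup_j\iota(h_j)\le K'$, then by (iii) and (i) every coefficient $\sum_{i+j=n}\mu(g_i,h_j)$ of $\mu(g(\nu),h(\nu))$ has invertibility index $\le K+K'+c$, so $A_b[[\nu]]$ is stable under $\mu$. Stability under $\Delta$ is obtained the same way, using the tensor-product $(\al,\beta)$-Hom-Hopf structure on $A\otimes A$ --- whose antipode is $S\otimes S$, with $(S\otimes S)\star\id_{A\otimes A}=(S\star\id)\otimes(S\star\id)$ --- to bound the invertibility index in $A\otimes A$ of $\Delta(x)$ by $\iota(x)+c$.

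Estimate (iii) is the convolution analogue of Proposition~\ref{prop:stableByProduct}, and this is the step I expect to be the main obstacle. The proof runs as follows: apply $S\star\id$ to $\mu(x,y)$, use the Hom-bialgebra compatibility between $\Delta$ and $\mu$ together with the fact that $S$ is an anti-homomorphism to rewrite $(S\star\id)(\mu(x,y))$ as a sum of terms $\mu(\mu(Sy_1,Sx_1),\mu(x_2,y_2))$ (Sweedler notation), reassociate by Hom-associativity so as to place $Sx_1$ next to $x_2$ and $Sy_1$ next to $y_2$, and then --- after composing with a bounded number of powers of $\al$ --- invoke the antipode relations $\al^{\iota(x)}\circ(\id\star S)(x)=\eta\circ\epsilon(x)$ and $\al^{\iota(y)}\circ(\id\star S)(y)=\eta\circ\epsilon(y)$, so that what remains collapses to $\eta\circ\epsilon(\mu(x,y))$. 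The delicate point is to organise the Hom-associativity rearrangements so that the extra powers of $\al$ they cost are bounded by a universal constant and by $\iota(x),\iota(y)$ only --- not by the (uncontrolled) invertibility indexes of the Sweedler components --- and likewise in the $\Delta$-estimate. In the cocommutative case, the one relevant for $\mathcal U\gg$, all this is much lighter: $\Delta$ commutes with $S$ without a flip, $S^2=\id$, and $S$ is a homomorphism up to powers of $\al$, so the estimates become direct. With (i)--(iii) established, $A_b[[\nu]]$ is a sub-Hom-bialgebra, and by Step (B) it carries the antipode $S$; hence it is an $(\al,\beta)$-Hom-Hopf algebra.
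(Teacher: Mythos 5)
The paper's own proof is exactly your Step (B), and nothing more: given $g(\nu)\in A_b[[\nu]]$, choose one $k$ bounding the invertibility indexes of all the coefficients $g_i$, write $\al^k\circ(S\star\id)(g_i)=\al^k\circ(\id\star S)(g_i)=\eta\circ\epsilon(g_i)$ for every $i$, and sum these relations against $\nu^i$ to get condition (c) of Definition~\ref{def:hom-Hopf-algebra_in_our_sense} for $g(\nu)$. That part of your proposal is correct and coincides with the paper's argument.

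Your Step (A) goes beyond what the paper does: the authors never verify that $A_b[[\nu]]$ is stable under $\mu$, $\Delta$, $\al$, $\beta$ and $S$, although the literal statement requires it. You are right to raise the point, but your treatment does not close it. The key estimate (iii), $\iota(\mu(x,y))\le\iota(x)+\iota(y)+c$, is only sketched, and the difficulty you yourself flag --- arranging the Hom-associativity moves so that the extra powers of $\al$ they cost are controlled by $\iota(x)$ and $\iota(y)$ rather than by the uncontrolled indexes of the Sweedler components --- is exactly where the proof would have to live; it is not resolved in your text. (Note that the analogous statement for the algebra product, Proposition~\ref{prop:stableByProduct}, works because one applies the inverse relation to $x$ and $y$ themselves; the convolution version does not reduce to it in any obvious way.) Similarly, the identity $\Delta\circ S=(S\otimes S)\circ\tau\circ\Delta$ used in your point (ii) is, classically, a theorem whose proof uses the strict antipode axiom, and it is not available for free under the weakened axiom; you would need to prove it in this setting or check it directly on the explicit antipode of ${\mathcal U}\gg$, where cocommutativity and the concrete formula for $S$ make it immediate. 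In short: your Step (B) reproduces the paper's entire proof; your Step (A) exposes a real omission in the paper but, as written, does not fill it.
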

In particular,  polynomial elements are in $ A_{b}[[\nu]] $.

\begin{defn}
\label{def:groupLikeAndTheLikes}
Let $( {\A},\mu,\al,\mathds{1},\Delta,\beta , \epsilon, S) $ be an $(\al,\beta )$-Hopf-algebra.
An element $g\in \A$ is a {\textbf{ group-like element}} if
$ \Delta (g) = g \otimes g \hbox{ and } \epsilon(g)=1$.
Let  $\nu $ be a formal parameter.
\begin{enumerate}
 \item[(i)] We call a {\textbf{formal group-like element}} a  formal series $g(\nu) \in {\A}_b[[\nu]] $ such that:
\begin{equation}\label{eq:group-like} \Delta (g(\nu)) = g(\nu) \otimes g(\nu) \hbox{ and } \epsilon(g(\nu))=1 .\end{equation}
 \item[(ii)]
Elements in ${A}[\nu] $ (i.e. polynomials in $\nu$) are called {\textbf{$p$-order group-like elements}}
when:
\begin{equation}\label{eq:group-like-order-n} \Delta (g(\nu)) = g(\nu) \otimes g(\nu)   \hbox{ modulo }  \nu^{p+1} \hbox{ and } \epsilon(g(\nu))=1.\end{equation}
 \item [(iii)]
A {\textbf{formal group-like sequence}} is a sequence $ (g_p (\nu))_{p \in {\mathbb N}} $ of elements in ${\A}[\nu] $ such that
 \begin{enumerate}
 \item[a)] for all $p \geq 1$, $g_p(\nu) $ is a $p$-order formal group-like element,
 \item[b)]  $g_{p+1} (\nu)= \al (g_p (\nu))$ modulo $\nu^{p+1} $,
 \item[c)] there exists an integer $k$ such that the invertibility index of  $g_p(\nu) $
is less or equal to  $k$ for all $p \in {\mathbb N}$.
 \end{enumerate}
 We denote by $G_{seq}(A)$ the set of all formal group-like  sequences of ${\A} $.
\end{enumerate}
\end{defn}

It is classical that group-like elements form a group.
More generally:

\begin{prop}
\label{prop:grouplike}
Let $( {A},\mu,\al,\mathds{1},\Delta,\beta , \epsilon, S) $  be an $(\alpha,\beta)$-Hom-Hopf algebra.
Group-like elements, formal group-like elements, $p$-order group-like elements for an arbitrary $p \in {\mathbb N}$, and formal group-like sequences form a Hom-group. Its product is $\mu$, the inverse of $g$ is $S(g)$, and the unit is ${\mathds{1}}$.
\end{prop}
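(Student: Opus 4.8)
The plan is to verify, for each of the four families in turn, the axioms of a Hom-group (Definition~\ref{def:ourHomGroup}), reducing everything to properties of the ambient $(\al,\beta)$-Hom-Hopf algebra $A$ (respectively $A_b[[\nu]]$, respectively the truncations $A[\nu]/(\nu^{p+1})$). I would first collect the ingredients that come for free: $\mu$ is Hom-associative on $A$, hence on any subset stable under $\mu$ and $\al$; $\mathds{1}$ is a Hom-unit, $\mu(\mathds{1},x)=\mu(x,\mathds{1})=\al(x)$; $S$ is an anti-homomorphism of $A$ with $S\circ\al=\al\circ S$; and $\Delta\circ\al=(\al\otimes\al)\circ\Delta$, which I would deduce from $\al=\mu(\mathds{1},-)$ together with $\Delta(\mu(x\otimes y))=\Delta(x)\cdot\Delta(y)$ and $\Delta(\mathds{1})=\mathds{1}\otimes\mathds{1}$ (so every power of $\al$ also respects $\Delta$). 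After that, the proof splits into (A) stability of each family under $\mu$, $\al$ and $S$, and (B) the weakened invertibility condition with a controlled index.

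For (A): stability under $\mu$ is the compatibility of $\Delta$ and $\epsilon$ with $\mu$, namely $\Delta(\mu(g,h))=\Delta(g)\cdot\Delta(h)=(g\otimes g)\cdot(h\otimes h)=\mu(g,h)\otimes\mu(g,h)$ and $\epsilon(\mu(g,h))=\epsilon(g)\epsilon(h)=1$; the same lines read modulo $\nu^{p+1}$ handle $p$-order elements, and read componentwise they handle sequences (where one also needs $\mu(g_{p+1},h_{p+1})\equiv\al(\mu(g_p,h_p))$ modulo $\nu^{p+1}$, immediate from $g_{p+1}\equiv\al(g_p)$, $h_{p+1}\equiv\al(h_p)$ and multiplicativity of $\al$). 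Stability under $\al$ follows from $\Delta\circ\al=(\al\otimes\al)\circ\Delta$ and $\epsilon\circ\al=\epsilon$ (incidentally $\beta$ acts as the identity on group-like elements, since $(\id\otimes\epsilon)\circ\Delta=\beta$). Stability under $S$: $\epsilon(S(g))=\epsilon(g)=1$ by axiom (b) of Definition~\ref{def:hom-Hopf-algebra_in_our_sense}, while $\Delta(S(g))=S(g)\otimes S(g)$ I would get by the classical computation — apply $\Delta$ to the identity $\al^k\mu(g,S(g))=\mathds{1}$, use $\Delta\circ\al^k=(\al^k\otimes\al^k)\circ\Delta$ and $\Delta(g)=g\otimes g$, then cancel $g$ against $S(g)$ — transcribed to the Hom setting, where the cancellation is legitimate only after applying a further power of $\al$; this ``up to a power of $\al$'' phenomenon (equivalently, working modulo the ideal $\mathfrak{k}$ of $\al$-nilpotent elements) is harmless and is exactly the regime in which the $\nu$-adic statements are meant. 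For sequences one also needs $S(g_{p+1})\equiv\al(S(g_p))$ modulo $\nu^{p+1}$, again immediate from $g_{p+1}\equiv\al(g_p)$ and $S\circ\al=\al\circ S$.

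For (B): the point is that $\mu(g,S(g))=\mu\circ(\id\otimes S)\circ\Delta(g)=(\id\star S)(g)$ and, symmetrically, $\mu(S(g),g)=(S\star\id)(g)$, so the antipode condition (\ref{eq:antipode_condition}) hands us, for each group-like $g$, an integer $k$ with $\al^k(\mu(g,S(g)))=\al^k(\mu(S(g),g))=\eta\circ\epsilon(g)=\mathds{1}$ (using $\epsilon(g)=1$), and this $k$ is exactly the invertibility index of $g$ in the Hom-group. For a single group-like element this finishes the proof, since Definition~\ref{def:ourHomGroup} only asks existence of $k$ per element. For a formal group-like element $g(\nu)=\sum_i g_i\nu^i\in A_b[[\nu]]$ the invertibility indices of the $g_i$ are uniformly bounded by definition of $A_b[[\nu]]$, so one $k$ serves all $i$ at once and, summing, serves $g(\nu)$ — here one uses that $A_b[[\nu]]$ is itself an $(\al,\beta)$-Hom-Hopf algebra. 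For $p$-order group-like elements the same works modulo $\nu^{p+1}$, the relevant indices being trivially bounded (finitely many coefficients up to that order).

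The only family where the uniform bound on indices is not built in but must be propagated through the group operations is $G_{seq}(A)$: each $g_p(\nu)$ has index $\le k$ by condition (c) in part (iii) of Definition~\ref{def:groupLikeAndTheLikes}, and I must then check that $(\mu(g_p,h_p))_p$ and $(S(g_p))_p$ satisfy such a bound too. This is precisely the quantitative content of Proposition~\ref{prop:stableByProduct}: its proof shows the index of $\mu(g_p,h_p)$ is at most the sum of those of $g_p,h_p$ plus $2$, and that of $S(g_p)$ is at most that of $g_p$ — bounds independent of $p$. The main obstacle, then, is not any single calculation but the exponent bookkeeping: turning ``inverse'' into ``inverse after $\al^k$'' is costless one element at a time, yet in the sequence case it forces the additive, uniform estimate on invertibility indices supplied by Proposition~\ref{prop:stableByProduct}; a secondary subtlety, noted above, is that the group-like identities for $S(g)$ are to be read up to powers of $\al$.
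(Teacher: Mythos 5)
Your proof follows the same route as the paper's: stability under $\mu$ is read off from the compatibility of $\Delta$ and $\epsilon$ with the product, hom-invertibility of a group-like $g$ with hom-inverse $S(g)$ is exactly the antipode axiom \eqref{eq:antipode_condition} rewritten as $\al^k\mu(S(g),g)=\al^k\mu(g,S(g))=\eta\circ\epsilon(g)=\mathds{1}$, the $p$-order case is the same statement read modulo $\nu^{p+1}$, and the sequence and formal cases rest on the uniform bound on invertibility indices built into Definition~\ref{def:groupLikeAndTheLikes}. Two places where you go beyond the paper deserve comment. First, for $G_{seq}(A)$ you correctly observe that stability under $\mu$ requires not only the group-like identities for $\mu(g_p,h_p)$ but also a $p$-independent bound on its invertibility index, and you extract the additive estimate (index at most $k+k'+2$) from the proof of Proposition~\ref{prop:stableByProduct}; the paper's one-line ``immediate consequence of compatibility'' silently skips this point, so your version is the more complete one. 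Second, you try to verify that $S(g)$ is itself group-like, which is needed if the inverse is to be an anti-morphism of the set $G$ into itself; the paper does not address this at all. Be aware, though, that your argument here only yields $\Delta(S(g))=S(g)\otimes S(g)$ after applying a power of $\al\otimes\al$, i.e.\ modulo the kernel ideal $\mathfrak{k}$, and declaring this ``harmless'' does not place $S(g)$ in the set of group-like elements as literally defined by $\Delta(h)=h\otimes h$. This is a real soft spot, but it is one you inherit from (and at least make visible in) the paper rather than one you introduce; in the concrete examples of the paper ($\mathds{T}/\mathcal{I}$ and $\mathcal{U}\gg$) the antipode is an honest anti-coalgebra map and the issue disappears. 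Everything else in your write-up is correct and matches the intended argument.
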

\begin{proof}
 Stability under $\mu$ of group-like elements, $k$-order group-like elements and formal group-like sequences is an immediate consequence of the compatibility relation  between the multiplication and the comultiplication.
 The fact that group-like elements are hom-invertible and that $S(g)$ is a hom-inverse of a group-like element $g$ follows from \eqref{eq:antipode_condition}, which implies that there exists $k \in { \mathbb N}$ such that
  $$ \al^k \circ \mu  \,( S \otimes \id)\circ  \Delta (g) =   \al^k \circ \mu  \, ( \id \otimes S)\circ  \Delta (g) = \eta \circ \epsilon \, (g) ,$$
	which amounts the relation
	$$ \al^k \circ \mu \, (S(g),   g ) =  \al^k \circ \mu \, (g ,  S(g) )  = {\mathds{1}} .$$
 The same applies to $p$-order group-like elements, by taking the previous relation modulo $\nu^{p+1}$.
It then applies for formal group-like sequences, upon noticing that the assumption
(iii), c) in Definition \ref{def:groupLikeAndTheLikes}
guaranties that $ S(g_p (\nu)) $, which is a Hom-inverse of $g_p(\nu)$, has
an invertibility index bounded independently from $p$, hence that $ (S(g_p (\nu)))_{p \in {\mathbb N}} $ is a Hom-inverse of $ (g_p (\nu))_{p \in {\mathbb N}} $.
 For formal group-like elements, the proof follows the same lines, upon noticing that for every formal group-like element $g(\nu)=\sum_{i= 0}^\infty g_i \nu^i$, there is by assumption an integer $k$ such that for all $i \in {\mathbb N}$:
$$ \al^k \circ \mu ( S \otimes \id)\circ  \Delta (g_i) =   \al^k \circ \mu ( \id \otimes S)\circ  \Delta (g_i) = \eta \circ \epsilon \, (g_i) = \mathds{1} ,$$
so that $S(g(\nu))=\sum_{i= 0}^\infty S(g_i) \nu^i$ is a Hom-inverse of $g(\nu)$.
\end{proof}

\begin{rem}\label{rem:functorgroup}
Also, any morphism $\Phi$ of  $(\alpha,\beta)$-Hom-Hopf algebra induces in an obvious manner a morphism $\underline{\Phi}$ of Hom-groups between their respective group-like elements, formal group-like elements, $p$-order group-like elements for an arbitrary $p \in {\mathbb N}$, and formal group-like sequences.
Assigning to  an $(\alpha,\beta)$-Hom-Hopf algebra any of the previous types of Hom-groups, one obtains therefore a functor.
We call $G_{seq}$ the functor which associates to an $(\alpha,id)$-Hom-Hopf algebra its Hom-group of formal group-like sequences.
\end{rem}
\section{Weighted trees and universal enveloping algebras as Hom-Hopf algebras}
\label{univ_envel_algebra}

Donald Yau \cite{Yau08,Yau4} associated to any Hom-Lie algebra $(\gg, \brr{\, , \, }, \al)$ a Hom-associative algebra $ (U\gg,\mu,\al_F)$, that he called the universal enveloping algebra of $\gg$ and proved to be a Hom-bialgebra. His construction went through two steps: he first associated to any vector space $E$, equipped with a linear map $\al: E \to E$, the \textbf{free Hom-nonassociative algebra} $(F_{HNAs}(E), \mu_F, \al_F)$,
then, he considered the quotient of this algebra through the ideal $I^{\infty}=\bigcup_{n \geq 1} I^n$ where $I^1$ is the two-sided ideal
$$
I^1= \langle {\rm Im}(\mu_F \circ (\mu_F \otimes \al_F - \al_F \otimes \mu_F)); [x,y] -(xy-yx) \mbox{ for } x,y \in \gg \rangle, $$
and $(I^n)_{n \in {\mathbb N}} $ is given by the recursion relation $ I^{n+1}= \langle I^n \cup \al(I^n)\rangle $. He did not show that the henceforth obtained Hom-bialgebra comes with an antipode.

In the multiplicative case, a more direct construction exists, that we now present. We need first to introduce some generalities about weighted trees, and to define the free Hom-associative multiplicative algebra.
Moreover, we will see that it carries a structure of Hom-Hopf algebra.

\subsection{Weighted trees as the free Hom-associative algebra with $1$-generator}

A planar tree is an oriented graph drawn on a plane  with only one root. It is called binary when any vertex is trivalent, i.e., one root and two leaves. Usually we draw the root at the bottom of the tree and the leaves are drawn at the top of it:
\begin{center}
\begin{tikzpicture}[xscale=0.2, yscale=0.2]

\draw[line width=1pt] (0,-1) -- (0,2) -- (4,6);
\draw[line width=1pt] (3,5) -- (2,6);
\draw[line width=1pt] (0,2) -- (-4,6);
\draw[line width=1pt] (-2,4) -- (0,6);
\draw[line width=1pt] (-1,5) -- (-2,6);

\draw (-2,0) node {root};
\draw (-7,6) node {leaves};

\end{tikzpicture}
\end{center}

For any natural number $n\geq 1$, let $T_n$ denote the set of planar binary trees with $n$ leaves and one root. For $n=1$, $T_1$ admits only one element, namely the unique tree with one leaf and the root. Below are the sets $T_n$ for $n=1,2,3,4$:
$$
T_1=\left\{ \mbox{\begin{tikzpicture}[xscale=0.4, yscale=0.4,baseline={([yshift=-.8ex]current bounding box.center)}]
\draw[line width=1pt] (0,0) -- (0,2);
\end{tikzpicture}} \: \right\}, \quad T_2= \left\{ \mbox{
\begin{tikzpicture}[xscale=0.4, yscale=0.4,baseline={([yshift=-.8ex]current bounding box.center)}]
\draw[line width=1pt] (0,0) -- (0,1) -- (1,2);
\draw[line width=1pt] (0,1) -- (-1,2);
\end{tikzpicture}
} \right\} , \quad T_3=\left\{ \mbox{
\begin{tikzpicture}[xscale=0.4, yscale=0.4,baseline={([yshift=-.8ex]current bounding box.center)}]
\draw[line width=1pt] (0,0) -- (0,1) -- (1,2);
\draw[line width=1pt] (0.5,1.5) -- (0,2);
\draw[line width=1pt] (0,1) -- (-1,2);
\end{tikzpicture}
}, \mbox{
\begin{tikzpicture}[xscale=0.4, yscale=0.4,baseline={([yshift=-.8ex]current bounding box.center)}]
\draw[line width=1pt] (0,0) -- (0,1) -- (1,2);
\draw[line width=1pt] (0,1) -- (-1,2);
\draw[line width=1pt] (-0.5,1.5) -- (0,2);
\end{tikzpicture}
} \right\} $$
$$ T_4=\left\{ \mbox{
\begin{tikzpicture}[xscale=0.4, yscale=0.4,baseline={([yshift=-.8ex]current bounding box.center)}]
\draw[line width=1pt] (0,0) -- (0,1) -- (1,2);
\draw[line width=1pt] (0.3,1.3) -- (-0.2,2);
\draw[line width=1pt] (0.6,1.6) -- (0.35,2);
\draw[line width=1pt] (0,1) -- (-1,2);
\end{tikzpicture}
}, \mbox{
\begin{tikzpicture}[xscale=0.4, yscale=0.4,baseline={([yshift=-.8ex]current bounding box.center)}]
\draw[line width=1pt] (0,0) -- (0,1) -- (1,2);
\draw[line width=1pt] (0.3,1.3) -- (-0.2,2);
\draw[line width=1pt] (0.07,1.6) -- (0.45,2);
\draw[line width=1pt] (0,1) -- (-1,2);
\end{tikzpicture}
},\mbox{
\begin{tikzpicture}[xscale=0.4, yscale=0.4,baseline={([yshift=-.8ex]current bounding box.center)}]
\draw[line width=1pt] (0,0) -- (0,1) -- (1,2);
\draw[line width=1pt] (0,1) -- (-1,2);
\draw[line width=1pt] (0.6,1.6) -- (0.2,2);
\draw[line width=1pt] (-0.6,1.6) -- (-0.2,2);
\end{tikzpicture}
}, \mbox{
\begin{tikzpicture}[xscale=0.4, yscale=0.4,baseline={([yshift=-.8ex]current bounding box.center)}]
\draw[line width=1pt] (0,0) -- (0,1) -- (1,2);
\draw[line width=1pt] (-0.3,1.3) -- (0.2,2);
\draw[line width=1pt] (-0.6,1.6) -- (-0.35,2);
\draw[line width=1pt] (0,1) -- (-1,2);
\end{tikzpicture}
}, \mbox{
\begin{tikzpicture}[xscale=0.4, yscale=0.4,baseline={([yshift=-.8ex]current bounding box.center)}]
\draw[line width=1pt] (0,0) -- (0,1) -- (1,2);
\draw[line width=1pt] (-0.3,1.3) -- (0.2,2);
\draw[line width=1pt] (-0.07,1.6) -- (-0.45,2);
\draw[line width=1pt] (0,1) -- (-1,2);
\end{tikzpicture}
}\right\}.
$$
An element  $\varphi \in T_n$ shall be called an $n$-tree for short. When necessary we label the leaves of an $n$-tree by $1,2,3, \dots, n$ from left to right.

For $\varphi \in T_n$ and $\psi \in T_m$ be a pair of trees, the $(n+m)$-tree $\varphi  \vee \psi$, called the \emph{grafting of $\varphi$ and $\psi$}, is obtained by joining the roots of $\varphi $ and $\psi$ to create a new root. For instance,
\begin{center}
\begin{tikzpicture}[xscale=0.4, yscale=0.4]
\draw[line width=1pt] (0,0) -- (0,1) -- (1,2);
\draw[line width=1pt] (0,1) -- (-1,2);
\draw[line width=1pt] (-0.5,1.5) -- (0,2);
\draw (2,1) node {$\vee$};
\end{tikzpicture}
 \begin{tikzpicture}[xscale=0.4, yscale=0.4]
\draw[line width=1pt] (0,0) -- (0,1) -- (1,2);
\draw[line width=1pt] (0,1) -- (-1,2);
\draw (2,1) node {$=$};
\end{tikzpicture}
\begin{tikzpicture}[xscale=0.3, yscale=0.3]
\draw[line width=1pt] (1,0) -- (0,1) -- (1,2);
\draw[line width=1pt] (0,1) -- (-1,2);
\draw[line width=1pt] (-0.5,1.5) -- (0,2);
\draw[line width=1pt] (1,0) -- (1,-1);
\draw[line width=1pt] (1,0) -- (3,2);
\draw[line width=1pt] (2.5,1.5) -- (2,2);
\end{tikzpicture}
\end{center}
Note that grafting is neither an associative nor a commutative operation. For any tree $\varphi \in T_n$, there are unique integers $p$ and $q$ with $p+q=n$ and trees $\varphi_1 \in T_p$ and $\varphi_2 \in T_q$ such that $\varphi= \varphi_1 \vee \varphi_2$. It is clear that any tree in $T_n$ can be obtained from \begin{tikzpicture}[xscale=0.4, yscale=0.4,baseline={([yshift=-.8ex]current bounding box.center)}]
\draw[line width=1pt] (0,0) -- (0,1);
\end{tikzpicture} , the 1-tree,  by sucessive graftings.

 Yau's construction of $U\gg $ used weighted trees. In the sequel, since we are dealing with multiplicative case, it suffices to work with leaf weighted trees:

\begin{defn}
A \textbf{leaf weighted $n$-tree} is a pair $(\varphi, a)$ where:
\begin{itemize}
\item $\varphi \in T_n$ is a $n$-tree,
\item $a$ is an $n$-tuple $(a_1, a_2, \dots,a_n) \in {\mathbb N}^n$ of non-negative integers.
\end{itemize}
We call the tree $\varphi$ the underlying tree of the leaf weighted $n$-tree $(\varphi,a)$ while,
for all $ i=1 \dots,n$, the integer $a_i$ shall be referred to as the \textbf{weight} of the leaf $i$.
\end{defn}


We will indeed barely use the notation $ (\varphi,a) $ at all, and find more convenient to picture a leaf weighted $n$-tree $(\varphi, a_1,a_2, \dots, a_n)$ by drawing the tree $\varphi$ and putting the weight $a_i$ next to each leaf. For example, here are two leaf weighted $3$-trees:
\begin{center}
\begin{tikzpicture}[xscale=0.25, yscale=0.25]
\draw[line width=1pt] (0,0) -- (0,2) -- (2,4);
\draw[line width=1pt] (0,2) -- (-2,4);
\draw[line width=1pt] (-1,3) -- (0,4);


\draw (-2,4.7) node {$0$};
\draw (0,4.7) node {$2$};
\draw (2,4.7) node {$1$};
\end{tikzpicture}\hspace*{2cm}
\begin{tikzpicture}[xscale=0.25, yscale=0.25]

\draw[line width=1pt] (0,0) -- (0,2) -- (-2,4);
\draw[line width=1pt] (0,2) -- (2,4);
\draw[line width=1pt] (1,3) -- (0,4);
\draw (-2,4.7) node {$0$};
\draw (0,4.7) node {$1$};
\draw (2,4.7) node {$0$};
\end{tikzpicture}
\end{center}

The grafting operation extends to leaf weighted $n$-trees. For example:
\begin{center}
\begin{tikzpicture}[xscale=0.25, yscale=0.25]
\draw[line width=1pt] (0,0) -- (0,2) -- (2,4);
\draw[line width=1pt] (0,2) -- (-2,4);
\draw[line width=1pt] (-1,3) -- (0,4);

\draw (-2,4.7) node {$0$};
\draw (0,4.7) node {$2$};
\draw (2,4.7) node {$1$};
\draw (3,2) node {$\vee$};
\end{tikzpicture}
\begin{tikzpicture}[xscale=0.25, yscale=0.25]
\draw[line width=1pt] (0,0) -- (0,2) -- (-2,4);
\draw[line width=1pt] (0,2) -- (2,4);
\draw[line width=1pt] (1,3) -- (0,4);

\draw (-2,4.7) node {$0$};
\draw (0,4.7) node {$1$};
\draw (2,4.7) node {$0$};
\draw (3,2) node {$=$};
\end{tikzpicture}
\begin{tikzpicture}[xscale=0.4, yscale=0.4]
\draw[line width=1pt] (0,1) -- (0,1.5) -- (-1.5,3) -- (-2.5,4);
\draw[line width=1pt] (-2,3.5) -- (-1.5,4);
\draw[line width=1pt] (-1.5,3) -- (-0.5,4);
\draw[line width=1pt] (0,1.5) -- (1.5,3) -- (2.5,4);
\draw[line width=1pt] (1.5,3) -- (0.5,4);
\draw[line width=1pt] (2,3.5) -- (1.5,4);


\draw (-2.5,4.4) node {$0$};
\draw (-1.5,4.4) node {$2$};
\draw (-0.5,4.4) node {$1$};
\draw (0.5,4.4) node {$0$};
\draw (1.5,4.4) node {$1$};
\draw (2.5,4.4) node {$0$};
\end{tikzpicture}
\end{center}

For all $n \geq 1$, we let $B_n$ denote the set of leaf weighted $n$-trees. Let $B$ denote the union over $n \in {\mathbb N}$ of the sets $B_n$ together with an element that we call the unit and denote by $\mathds{1}$. Note that the element $\mathds{1}$ is different from the leaf weighted 1-tree $\begin{tikzpicture}[baseline={([yshift=-.8ex]current bounding box.center)}]
\draw[line width=1pt] (0,0) -- (0,0.4);
\draw (0,0.6) node {$0$};
\end{tikzpicture}$. We then consider the free vector space ${\mathds{T}}$ generated by the set~$B$.

We define on ${\mathds{T}}$ two natural linear maps:
\begin{enumerate}
 \item[(i)]  $\al:{\mathds{T}} \to {\mathds{T}} $ sending  $\mathds{1}$ to $\mathds{1}$, i.e. $\al(\mathds{1})=\mathds{1}$  and sending a leaf weighted $n$-tree to the leaf weighted $n$-tree obtained  adding $+1$ to all the weights of the leaves, i.e. $\al((\varphi, a_1,a_2, \dots, a_n))=((\varphi, a_1+1,a_2+1, \dots, a_n+1))$;
 \item[(ii)] a product $ \vee $ that, for  any pair of leaf weighted trees, is just the grafting of these trees and such that for any weighted tree $\varphi $
 $$
 \varphi \vee \mathds{1} = \mathds{1} \vee \varphi = \al (\varphi)
 $$
and $\mathds{1}\vee \mathds{1} = \mathds{1}$.
\end{enumerate}

For example,
$$\al\left( \mbox{\begin{tikzpicture}[xscale=0.2, yscale=0.2,baseline={([yshift=-.8ex]current bounding box.center)}]

\draw[line width=1pt] (0,0) -- (0,2) -- (2,4);
\draw[line width=1pt] (0,2) -- (-2,4);
\draw[line width=1pt] (-1,3) -- (0,4);


\draw (-2,4.8) node {$0$};
\draw (0,4.8) node {$2$};
\draw (2,4.8) node {$1$};
\end{tikzpicture}} \right)= \mbox{ \begin{tikzpicture}[xscale=0.2, yscale=0.2,baseline={([yshift=-.8ex]current bounding box.center)}]

\draw[line width=1pt] (0,0) -- (0,2) -- (2,4);
\draw[line width=1pt] (0,2) -- (-2,4);
\draw[line width=1pt] (-1,3) -- (0,4);

\draw (-2,4.8) node {$1$};
\draw (0,4.8) node {$3$};
\draw (2,4.8) node {$2$};
\end{tikzpicture}}
$$

The proof of  the following lemma is trivial:

\begin{lem} \label{alphavee}
The map $\al$ defined in item (i) above is a morphism for the grafting of trees, that is
$$
\al(\varphi \vee \psi)= \al(\varphi)\vee \al(\psi)
$$
for any $\varphi, \psi \in {\mathds{T}}$.
\end{lem}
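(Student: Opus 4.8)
The plan is to reduce the identity to basis elements and then run a short case analysis. Since the grafting product $\vee$ extends bilinearly to $\mathds{T}$ and $\al$ is linear by construction, both sides of the claimed equality $\al(\varphi \vee \psi) = \al(\varphi) \vee \al(\psi)$ are bilinear in $(\varphi,\psi)$; hence it suffices to verify it when $\varphi$ and $\psi$ each range over the generating set $B$, that is, when each of them is either the unit $\mathds{1}$ or a leaf weighted tree.

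First I would treat the generic case, where $\varphi \in B_n$ and $\psi \in B_m$ are genuine leaf weighted trees, say $\varphi = (\varphi_0, a_1, \dots, a_n)$ and $\psi = (\psi_0, b_1, \dots, b_m)$. By the definition of grafting on leaf weighted trees, $\varphi \vee \psi$ is the leaf weighted $(n+m)$-tree whose underlying tree is $\varphi_0 \vee \psi_0$ and whose weight tuple is the concatenation $(a_1, \dots, a_n, b_1, \dots, b_m)$; applying $\al$ then adds $1$ to every entry. On the other hand $\al(\varphi) \vee \al(\psi)$ is the grafting of $(\varphi_0, a_1+1, \dots, a_n+1)$ and $(\psi_0, b_1+1, \dots, b_m+1)$, which is the tree $\varphi_0 \vee \psi_0$ carrying the weight tuple $(a_1+1, \dots, a_n+1, b_1+1, \dots, b_m+1)$. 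The two results coincide.

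It remains to dispose of the cases involving the unit. If $\varphi = \mathds{1}$ and $\psi \in B_m$, then $\varphi \vee \psi = \al(\psi)$, so $\al(\varphi \vee \psi) = \al^2(\psi)$, whereas $\al(\mathds{1}) \vee \al(\psi) = \mathds{1} \vee \al(\psi) = \al(\al(\psi)) = \al^2(\psi)$, using $\al(\mathds{1}) = \mathds{1}$ together with the defining relation $\mathds{1} \vee \varphi = \al(\varphi)$; the case $\psi = \mathds{1}$, $\varphi \in B_n$ is symmetric, and for $\varphi = \psi = \mathds{1}$ both sides equal $\mathds{1}$ by $\mathds{1} \vee \mathds{1} = \mathds{1}$ and $\al(\mathds{1}) = \mathds{1}$. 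There is no genuine obstacle: the only point requiring a little care is not to overlook the unit cases, where it is the relation $\mathds{1} \vee \varphi = \al(\varphi)$ (and not $\mathds{1} \vee \varphi = \varphi$) that makes the powers of $\al$ balance on both sides.
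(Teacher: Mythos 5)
Your proof is correct and is precisely the routine verification the paper has in mind when it declares the proof of this lemma trivial and omits it: reduction to basis elements by bilinearity, the observation that both grafting and $\al$ act on weight tuples by concatenation and by adding $1$ respectively, and the check of the unit cases via $\mathds{1} \vee \varphi = \al(\varphi)$ and $\al(\mathds{1}) = \mathds{1}$. Nothing is missing.
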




We now define an important operation which consists in eliminating leaves while changing weights of the remaining ones:

\begin{defn}
Let $\varphi \in B_n$ and $I \subset \{ 1,2, \dots, n \}$, we will denote by $\varphi_I$ the tree obtained by replacing
all the leaves in $\{ 1,2, \dots, n \} \backslash I$ by $\mathds{1}$. In particular, $\varphi_{\emptyset}=\mathds{1}$ and $\varphi_{\{ 1,2, \dots, n \}}= \varphi$.
\end{defn}

As an example, if $\varphi$ is the tree \begin{tikzpicture}[xscale=0.4, yscale=0.4,baseline={([yshift=-.8ex]current bounding box.center)}]
\draw[line width=1pt] (0,1) -- (0,1.5) -- (-1.5,3) -- (-2.5,4);
\draw[line width=1pt] (-2,3.5) -- (-1.5,4);
\draw[line width=1pt] (-1.5,3) -- (-0.5,4);
\draw[line width=1pt] (0,1.5) -- (1.5,3) -- (2.5,4);
\draw[line width=1pt] (1.5,3) -- (0.5,4);
\draw[line width=1pt] (2,3.5) -- (1.5,4);


\draw (-2.5,4.5) node {$2$};
\draw (-1.5,4.5) node {$4$};
\draw (-0.5,4.5) node {$0$};
\draw (0.5,4.5) node {$3$};
\draw (1.5,4.5) node {$1$};
\draw (2.5,4.5) node {$2$};
\end{tikzpicture} and $I=\{3,5,6\}$, then

$$\varphi_I = \begin{tikzpicture}[xscale=0.4, yscale=0.4,baseline={([yshift=-.8ex]current bounding box.center)}]

\draw[line width=1pt] (0,1) -- (0,1.5) -- (-1.5,3) -- (-2.5,4);
\draw[line width=1pt] (-2,3.5) -- (-1.5,4);
\draw[line width=1pt] (-1.5,3) -- (-0.5,4);
\draw[line width=1pt] (0,1.5) -- (1.5,3) -- (2.5,4);
\draw[line width=1pt] (1.5,3) -- (0.5,4);
\draw[line width=1pt] (2,3.5) -- (1.5,4);


\draw (-2.5,4.5) node {$\mathds{1}$};
\draw (-1.5,4.5) node {$\mathds{1}$};
\draw (-0.5,4.5) node {$0$};
\draw (0.5,4.5) node {$\mathds{1}$};
\draw (1.5,4.5) node {$1$};
\draw (2.5,4.5) node {$2$};
\end{tikzpicture} =
\begin{tikzpicture}[xscale=0.8, yscale=0.8,baseline={([yshift=-.8ex]current bounding box.center)}]
\draw[line width=1pt] (0,0.7) -- (0,1) -- (1,2);
\draw[line width=1pt] (0,1) -- (-1,2);
\draw[line width=1pt] (0.6,1.6) -- (0.2,2);
\draw[line width=1pt] (-0.6,1.6) -- (-0.2,2);
\draw (-1,2.2) node {$\mathds{1}$};
\draw (-0.2,2.25) node {$0$};
\draw (0.2,2.25) node {$2$};
\draw (1,2.25) node {$3$};
\end{tikzpicture} =
\begin{tikzpicture}[xscale=0.8, yscale=0.8,baseline={([yshift=-.8ex]current bounding box.center)}]
\draw[line width=1pt] (0,0.7) -- (0,1) -- (1,2);
\draw[line width=1pt] (0,1) -- (-1,2);
\draw[line width=1pt] (0.6,1.6) -- (0.2,2);
\draw (-1,2.2) node {$1$};
\draw (0.2,2.25) node {$2$};
\draw (1,2.25) node {$3$};
\end{tikzpicture}
$$

\

We then define a coproduct $\Delta: {\mathds{T}}  \longrightarrow {\mathds{T}}  \otimes {\mathds{T}} $ by
\begin{equation}\label{def:coproduct}
\Delta \varphi= \sum_{\substack{I\cup J= \{1 , \dots, n\} \\ I \cap J= \emptyset}}  \varphi_I \otimes \varphi_J
\end{equation}
for a leaf weighted $n$-tree $\varphi$, extended by linearity, and $\Delta (\mathds{1}) = \mathds{1} \otimes \mathds{1} $.

\begin{lem}\label{lem:coproduct-properties}
This coproduct satisfies the following:
\begin{enumerate}
  \item[(i)] $(\Delta \otimes id) \circ \Delta = ( id \otimes \Delta) \circ \Delta$, i.e., $\Delta$ is coassociative.
  \item[(ii)] $\sigma_{12} \circ \Delta = \Delta$, i.e., $\Delta$ is cocommutative (with $\sigma_{12}: A \otimes B \longrightarrow B \otimes A$ being the twist map defined by $\sigma_{12}(a \otimes b)= b \otimes a$.)
\end{enumerate}
\end{lem}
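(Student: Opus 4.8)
The plan is to derive both statements from a single closed form for the threefold iterated coproduct, so that (i) and (ii) each reduce to a reindexing of that form. Cocommutativity (ii) costs nothing: the index set $\{(I,J):I\cup J=\{1,\dots,n\},\ I\cap J=\emptyset\}$ of the sum \eqref{def:coproduct} is stable under $(I,J)\mapsto(J,I)$, so $\sigma_{12}$ merely permutes the summands, $\sigma_{12}\circ\Delta\,\varphi=\sum_{I\cup J=\{1,\dots,n\}}\varphi_J\otimes\varphi_I=\Delta\,\varphi$, and the case of $\mathds{1}$ is immediate from $\Delta\mathds{1}=\mathds{1}\otimes\mathds{1}$.

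For coassociativity the crucial input is the \emph{transitivity} of leaf-erasing: if $I\subseteq A\subseteq\{1,\dots,n\}$ and one identifies the leaf set of the tree $\varphi_A$ with $A$ (order-preservingly), then $(\varphi_A)_I=\varphi_I$. I would establish this by induction on $n$, using the recursive description of $\varphi\mapsto\varphi_I$ that follows from the definition together with the rules $\mathds{1}\vee\mathds{1}=\mathds{1}$ and $\psi\vee\mathds{1}=\mathds{1}\vee\psi=\al(\psi)$: writing $\varphi=\varphi_1\vee\varphi_2$ for the unique decomposition with $\varphi_1$ on $p$ leaves and $\varphi_2$ on $q$ leaves ($p+q=n$), and splitting $I$ into $I_1\subseteq\{1,\dots,p\}$ and $I_2\subseteq\{p+1,\dots,n\}$ (the second shifted to index the leaves of $\varphi_2$), one has $\varphi_I=\mathds{1}$ when $I=\emptyset$, $\varphi_I=\al\bigl((\varphi_2)_{I_2}\bigr)$ when $I_1=\emptyset\neq I_2$, $\varphi_I=\al\bigl((\varphi_1)_{I_1}\bigr)$ when $I_2=\emptyset\neq I_1$, and $\varphi_I=(\varphi_1)_{I_1}\vee(\varphi_2)_{I_2}$ otherwise. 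Since by Lemma \ref{alphavee} the map $\al$ is central for $\vee$ and merely shifts all weights by $1$, the powers of $\al$ accumulated along the two routes to $\varphi_I$ (directly, versus through $\varphi_A$) coincide, and the induction goes through.

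Granting transitivity, I would finish by a direct computation. For a leaf weighted $n$-tree $\varphi$, writing (via the above identification) subsets of $A$ for subsets of the leaf set of $\varphi_A$,
$$(\Delta\otimes\id)\circ\Delta\,\varphi=\sum_{A\cup B=\{1,\dots,n\},\ A\cap B=\emptyset}\Delta(\varphi_A)\otimes\varphi_B=\sum_{A\cup B=\{1,\dots,n\},\ A\cap B=\emptyset}\ \sum_{I\cup J=A,\ I\cap J=\emptyset}(\varphi_A)_I\otimes(\varphi_A)_J\otimes\varphi_B,$$
which by $(\varphi_A)_I=\varphi_I$ and $(\varphi_A)_J=\varphi_J$ equals $\sum\varphi_I\otimes\varphi_J\otimes\varphi_B$, the sum ranging over all ordered partitions of $\{1,\dots,n\}$ into three (possibly empty) disjoint blocks $I,J,B$. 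Splitting instead the second tensor factor and invoking $(\varphi_B)_J=\varphi_J$ gives the very same triple sum for $(\id\otimes\Delta)\circ\Delta\,\varphi$; the two therefore agree, and trivially so on $\mathds{1}$, which is (i).

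The only genuine difficulty is the transitivity lemma $(\varphi_A)_I=\varphi_I$: one must keep track of the relabeling of leaves when passing to the subtree $\varphi_A$, and of how many copies of $\al$ are produced when whole branches collapse to $\mathds{1}$ at an internal vertex. Once that bookkeeping is in place, everything else is a reindexing of finite sums.
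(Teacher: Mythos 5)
Your proposal is correct and follows essentially the same route as the paper: the paper's proof of (i) simply asserts that both iterated coproducts equal the single sum $\sum \varphi_I\otimes\varphi_J\otimes\varphi_K$ over ordered partitions of $\{1,\dots,n\}$ into three disjoint (possibly empty) blocks, and declares (ii) trivial from the symmetry of the index set. The only difference is that you make explicit, and prove by induction on the grafting decomposition, the transitivity identity $(\varphi_A)_I=\varphi_I$ that the paper uses implicitly; this is a genuine (and correct) tightening of the same argument rather than a different approach.
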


\begin{proof}
(i) Using Definition \ref{def:coproduct} of $\Delta$, we have
$$
(\Delta \otimes id) \circ \Delta \varphi= \sum_{\substack{I \cup J \cup K= \{1,\dots, n\} \\ I \cap J= \emptyset \\ I \cap K = \emptyset \\ J \cap K = \emptyset}} \varphi_I \otimes \varphi_J \otimes \varphi_K =( id \otimes \Delta) \circ \Delta \varphi
$$
for any $\varphi \in B_n$.

(ii) Trivial in view of (\ref{def:coproduct}).
\end{proof}

We also define a counit $\epsilon: {\mathds{T}} \longrightarrow \kk$ by $\epsilon(\mathds{1})= 1$ and $\epsilon( \varphi)=0$, for any $\varphi \in B_n$, using then linearity. Using the above Lemma \ref{lem:coproduct-properties}, it is easy to prove that:

\begin{prop}\label{prop:coalgebra}
The triple $({\mathds{T}}, \Delta,\epsilon)$ is a co-unital coassociative cocommutative coalgebra.
\end{prop}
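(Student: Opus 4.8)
The plan is to observe that the two substantive coalgebra axioms have already been dealt with: coassociativity is Lemma~\ref{lem:coproduct-properties}(i) and cocommutativity is Lemma~\ref{lem:coproduct-properties}(ii). Hence the only thing that remains is to check that $\epsilon$ is a counit for $\Delta$, i.e. that
$(\id \otimes \epsilon) \circ \Delta = \id = (\epsilon \otimes \id) \circ \Delta$
under the canonical identifications ${\mathds{T}} \otimes \kk \cong {\mathds{T}} \cong \kk \otimes {\mathds{T}}$. Since all maps involved are linear and $B$ is a basis of ${\mathds{T}}$, it suffices to verify this on $\mathds{1}$ and on an arbitrary leaf weighted $n$-tree $\varphi$.

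First I would dispatch the unit: by definition $\Delta(\mathds{1}) = \mathds{1} \otimes \mathds{1}$ and $\epsilon(\mathds{1}) = 1$, so both composites send $\mathds{1}$ to $\mathds{1}$. Next, for $\varphi \in B_n$, I would expand using Definition~\ref{def:coproduct}:
$(\id \otimes \epsilon) \circ \Delta \varphi = \sum_{I \cup J = \{1,\dots,n\},\, I \cap J = \emptyset} \varphi_I \,\epsilon(\varphi_J)$.
By the definition of $\epsilon$, the factor $\epsilon(\varphi_J)$ vanishes unless $\varphi_J = \mathds{1}$; and $\varphi_J = \mathds{1}$ holds precisely when $J = \emptyset$, in which case $I = \{1,\dots,n\}$ and $\varphi_I = \varphi_{\{1,\dots,n\}} = \varphi$. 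Thus exactly one summand survives and the composite equals $\varphi$. The identity $(\epsilon \otimes \id) \circ \Delta = \id$ is then obtained symmetrically (or simply by combining the previous computation with cocommutativity).

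I do not expect any genuine obstacle here; the argument is a short bookkeeping exercise on finite sums indexed by the partitions $I \sqcup J = \{1,\dots,n\}$. The only point requiring a word of care is the equivalence ``$\varphi_J = \mathds{1}$ if and only if $J = \emptyset$'': this is immediate from the definition of $\varphi_J$ together with the convention, fixed when $B$ was introduced, that $\mathds{1}$ is a basis element distinct from every leaf weighted tree --- in particular distinct from the weight-zero $1$-tree --- so that no nonempty set of surviving leaves can collapse $\varphi_J$ to the formal unit. Once this is noted, assembling the three verifications (coassociativity, cocommutativity, counitality) yields the statement.
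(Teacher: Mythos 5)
Your proposal is correct and follows exactly the route the paper intends: the paper simply asserts the proposition "is easy to prove" from Lemma~\ref{lem:coproduct-properties}, and your contribution is to fill in the only missing verification, namely counitality, via the observation that $\epsilon(\varphi_J)=0$ unless $J=\emptyset$, in which case $\varphi_I=\varphi$. This is the standard bookkeeping the authors left implicit, so there is nothing to add.
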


This co-unital coassociative cocommutative coalgebra is compatible with the product $\vee$ in the following sense:

\begin{lem} \label{Deltavee}
For all  $\varphi, \psi \in {\mathds{T}} $  the compatibility relation
\begin{equation}\label{compatibility}
\Delta( \varphi \vee \psi)= \Delta(\varphi) \vee \Delta(\psi),
\end{equation}
holds with the understanding that the right hand side of (\ref{compatibility}) is equipped with the natural algebra structure on the tensor algebra.
%
\end{lem}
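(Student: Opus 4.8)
The plan is to reduce \eqref{compatibility} to a single combinatorial identity relating the leaf-replacement operation $\chi \mapsto \chi_I$ to the grafting $\vee$. First I would use that $\Delta$, $\vee$, and the tensor product are all (bi)linear, so that it suffices to prove \eqref{compatibility} when $\varphi$ and $\psi$ belong to the generating set $B$ — that is, when each of them is either the unit $\mathds{1}$ or a genuine leaf weighted tree. I would dispose right away of the degenerate cases in which $\varphi$ or $\psi$ equals $\mathds{1}$: then $\varphi \vee \psi$ is $\al(\psi)$, $\al(\varphi)$, or $\mathds{1}$, and \eqref{compatibility} follows from $\Delta(\mathds{1}) = \mathds{1}\otimes\mathds{1}$ together with the auxiliary identity $\Delta\circ\al = (\al\otimes\al)\circ\Delta$. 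This auxiliary identity is in turn immediate from \eqref{def:coproduct} once one notes that $(\al\varphi)_I = \al(\varphi_I)$ for every $I$, which holds because $\al$ is a morphism for the grafting (Lemma \ref{alphavee}), fixes $\mathds{1}$, and the procedure producing $\varphi_I$ from $\varphi$ uses only $\vee$ and $\mathds{1}$.

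So the substance is the case $\varphi \in B_n$, $\psi \in B_m$ with $n,m \geq 1$, where $\varphi\vee\psi \in B_{n+m}$ is the grafted tree whose leaves $1,\dots,n$ come from $\varphi$ and whose leaves $n+1,\dots,n+m$ come from $\psi$. The key — and, I expect, the only genuinely fiddly — step is to establish that leaf-replacement splits along the two branches of a grafting: for every $I \subset \{1,\dots,n+m\}$, writing $I' = I \cap \{1,\dots,n\}$ and $I'' = \{\, i-n : i\in I,\ i>n \,\}$, one has
$$ (\varphi \vee \psi)_I = \varphi_{I'} \vee \psi_{I''}. $$
When $I'$ and $I''$ are both non-empty this is immediate from the definition of $\chi\mapsto\chi_I$, since replacing a leaf by $\mathds{1}$ affects only the branch containing that leaf, and the two branches of $\varphi\vee\psi$ are precisely $\varphi$ and $\psi$. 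The care is needed when, say, $I' = \emptyset$: then every leaf of the left branch is replaced by $\mathds{1}$, the left branch collapses to $\mathds{1}$ by iterating $\mathds{1}\vee\mathds{1} = \mathds{1}$, and one must observe that the value $\mathds{1}\vee\psi_{I''} = \al(\psi_{I''})$ one then obtains is indeed $\varphi_{\emptyset}\vee\psi_{I''}$; the case $I'' = \emptyset$ is symmetric.

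With this identity available, assembling is routine. Each partition $I \sqcup J = \{1,\dots,n+m\}$ is the same datum as a pair of partitions $I'\sqcup J' = \{1,\dots,n\}$ and $I''\sqcup J'' = \{1,\dots,m\}$, and $\mathds{T}\otimes\mathds{T}$ carries the tensor-algebra product $(a\otimes b)\vee(c\otimes d) = (a\vee c)\otimes(b\vee d)$, so
\begin{align*}
\Delta(\varphi \vee \psi)
&= \sum_{I \sqcup J = \{1,\dots,n+m\}} (\varphi\vee\psi)_I \otimes (\varphi\vee\psi)_J \\
&= \sum_{I'\sqcup J' = \{1,\dots,n\}}\ \sum_{I''\sqcup J'' = \{1,\dots,m\}} (\varphi_{I'}\vee\psi_{I''}) \otimes (\varphi_{J'}\vee\psi_{J''}) \\
&= \Delta(\varphi) \vee \Delta(\psi),
\end{align*}
where the second equality applies the key identity to both tensor factors and the third unfolds the definition \eqref{def:coproduct} of $\Delta$ on $\varphi$ and on $\psi$ together with the product on $\mathds{T}\otimes\mathds{T}$. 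This is precisely \eqref{compatibility}, and the cases $\varphi = \mathds{1}$ or $\psi = \mathds{1}$ handled in the first paragraph complete the proof.
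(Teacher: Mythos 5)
Your proof is correct and follows essentially the same route as the paper: the same splitting of a subset $I\subseteq\{1,\dots,n+m\}$ into its $\varphi$-part and $\psi$-part, the same key identity $(\varphi\vee\psi)_I=\varphi_{I'}\vee\psi_{I''}$, and the same reduction of the unit cases to $\Delta\circ\al=(\al\otimes\al)\circ\Delta$. If anything, you are more explicit than the paper (which dismisses the key identity as ``a direct computation'') about the edge cases $I'=\emptyset$ or $I''=\emptyset$, where the collapsed branch produces the factor $\al$ consistently on both sides.
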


\begin{proof}
For $I$   a subset of $\{1, \dots, n\}$, let us denote by $I^c$ the complement set.
For all $\varphi \in B_n$ and $\psi \in B_m$, equation (\ref{def:coproduct}) gives
$$
\Delta( \varphi \vee \psi) = \sum_{I \subseteq \{1, \dots, n+m\}} (\varphi \vee \psi)_I \otimes (\varphi \vee \psi)_{I^c}.
$$
Let $I_n= I \cap \{1,\dots,n\}$ and $I_m = I \cap \{n+1, \dots, n+m\}$, and define $I_n^c$ and $I_m^c$ to be the complements of $I_n $ and $ I_m$ in $\{1,\dots,n\}$
and $ \{n+1, \dots, n+m\}$, respectively. A direct computation gives:
\begin{align}
 \Delta( \varphi \vee \psi)& = \sum_{\substack{I_n \subseteq \{1, \dots, n\} \\ I_m \subseteq \{n+1, \dots, n+m\}}} (\varphi_{I_n} \vee \psi_{I_m}) \otimes (\varphi_{I_n^c} \vee \psi_{I_m^c}) \nonumber \\
& = \sum_{\substack{I_n \subseteq \{1, \dots, n\} \\ I_m \subseteq \{1, \dots, m\}}} (\varphi_{I_n} \otimes \varphi_{I_n^c}) \vee (\psi_{I_m} \otimes \psi_{I_m^c}) \nonumber \\
& = \left(\sum_{I_n \subseteq \{1, \dots, n\}} \varphi_{I_n} \otimes \varphi_{I_n^c}\right)   \vee \left(\sum_{ I_m \subseteq \{1, \dots, m\}}\psi_{I_m} \otimes \psi_{I_m^c} \right) \nonumber \\
& = \Delta(\varphi) \vee \Delta(\psi). \nonumber
\end{align}
If $ \varphi$ or $\psi $ are equal to $\mathds{1}$, (\ref{compatibility}) is equivalent to:
\begin{equation}
\label{Deltaal}
\Delta \circ \al= (\al \otimes \al) \circ \Delta,
\end{equation}
which follows directly from (\ref{def:coproduct}). This completes the proof.
\end{proof}

Because the map $\vee$ is not associative the set $({\mathds{T}}, \vee, \mathds{1}, \Delta, \epsilon)$ is not a bialgebra. Let us consider now the bilateral ideal ${\mathcal I}$, with respect to $\vee$, generated by the following elements:
 $$
 (\phi \vee \psi) \vee \al (\chi) -\al (\phi) \vee (\psi \vee \chi)
 $$
where $\phi,\psi,\chi $  are either arbitrary leaf weighted  trees or the unit $\mathds{1}$.
Notice that in case $\phi, \psi$ or $\chi$ are equal to $\mathds{1}$, then $c(\phi,\psi,\chi)=0$, using Lemma \ref{alphavee}.
We call the quotient $\mathds{T}/{\mathcal I}$,  equipped with its structural maps $\vee, \al$ and the unit $\mathds{1}$,
the \textbf{free Hom-associative algebra with $1$-generator}.

\begin{rem}\label{rem:universal}
The free Hom-associative algebra with $1$-generator should be considered as an algebra that encodes all the possible operations
that can be described purely in terms of the Hom-associative product and $\al$. These operations can then
be applied to an arbitrary Hom-associative algebra $\A$.
\end{rem}

\begin{prop}
The coproduct $\Delta$ and the counit $\epsilon$ of $\mathds{T}$ go to the quotient, and endow
the free Hom-associative algebra with $1$-generator $\mathds{T}/{\mathcal I}$ with a Hom-bialgebra
structure.
\end{prop}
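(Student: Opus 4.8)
The plan is to reduce the assertion to two facts about the bilateral ideal $\mathcal{I}$: that $\epsilon$ vanishes on $\mathcal{I}$, and that $\Delta(\mathcal{I})\subseteq\mathcal{I}\otimes\mathds{T}+\mathds{T}\otimes\mathcal{I}$. Granting these, write $\pi\colon\mathds{T}\to\mathds{T}/\mathcal{I}$ for the projection and use the identification $(\mathds{T}/\mathcal{I})^{\otimes 2}=\mathds{T}^{\otimes 2}/(\mathcal{I}\otimes\mathds{T}+\mathds{T}\otimes\mathcal{I})$; then $\Delta(\mathcal{I})\subseteq\ker(\pi\otimes\pi)$ and $\mathcal{I}\subseteq\ker\epsilon$, so $(\pi\otimes\pi)\circ\Delta$ and $\epsilon$ factor through well-defined linear maps $\bar\Delta\colon\mathds{T}/\mathcal{I}\to(\mathds{T}/\mathcal{I})^{\otimes 2}$ and $\bar\epsilon\colon\mathds{T}/\mathcal{I}\to\kk$. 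After this, every identity needed to make $\mathds{T}/\mathcal{I}$ a Hom-bialgebra with structure maps $\vee,\al,\mathds{1},\bar\Delta,\bar\epsilon$ and coalgebra twisting map $\id$ already holds on $\mathds{T}$ and descends through the surjection $\pi$; thus the whole content lies in the two reductions. (We take the coalgebra twisting map to be $\id$ rather than $\al$: with this choice Hom-coassociativity is ordinary coassociativity, whereas $(\mathds{T},\Delta,\al,\epsilon)$ fails to be Hom-coassociative, as one sees already on a one-leaf tree.)

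For the counit, one first checks that $\epsilon$ is multiplicative for $\vee$, i.e.\ $\epsilon(\varphi\vee\psi)=\epsilon(\varphi)\epsilon(\psi)$, which is immediate from the definitions, treating separately the cases where $\varphi$ or $\psi$ equals $\mathds{1}$ and using $\epsilon\circ\al=\epsilon$; since every element of $\mathcal{I}$ is an iterated, arbitrarily bracketed $\vee$-product of elements of $\mathds{T}$ with at least one factor $c(\phi,\psi,\chi)=(\phi\vee\psi)\vee\al(\chi)-\al(\phi)\vee(\psi\vee\chi)$, and $\epsilon(c(\phi,\psi,\chi))=\epsilon(\phi)\epsilon(\psi)\epsilon(\chi)-\epsilon(\phi)\epsilon(\psi)\epsilon(\chi)=0$, multiplicativity of $\epsilon$ forces $\epsilon(\mathcal{I})=0$. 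For the coproduct, set $K=\{z\in\mathds{T}:\Delta z\in\mathcal{I}\otimes\mathds{T}+\mathds{T}\otimes\mathcal{I}\}$; the goal is $\mathcal{I}\subseteq K$. Each generator lies in $K$: writing $\Delta\phi=\sum\phi_1\otimes\phi_2$ in Sweedler notation, where by (\ref{def:coproduct}) each $\phi_i$ is again a leaf weighted tree or $\mathds{1}$, applying Lemma~\ref{Deltavee} (twice to each summand), the relation $\Delta\circ\al=(\al\otimes\al)\circ\Delta$, and the identity $A\otimes B-A'\otimes B'=(A-A')\otimes B+A'\otimes(B-B')$, one gets
\[
\Delta\bigl(c(\phi,\psi,\chi)\bigr)=\sum\Bigl(c(\phi_1,\psi_1,\chi_1)\otimes\bigl((\phi_2\vee\psi_2)\vee\al(\chi_2)\bigr)+\bigl(\al(\phi_1)\vee(\psi_1\vee\chi_1)\bigr)\otimes c(\phi_2,\psi_2,\chi_2)\Bigr),
\]
and since each $c(\phi_i,\psi_i,\chi_i)$ is again a generator of $\mathcal{I}$, the right-hand side lies in $\mathcal{I}\otimes\mathds{T}+\mathds{T}\otimes\mathcal{I}$. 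Moreover $K$ is a two-sided ideal of $(\mathds{T},\vee)$: for $z\in K$ and $x\in\mathds{T}$, Lemma~\ref{Deltavee} gives $\Delta(x\vee z)=\Delta(x)\vee\Delta(z)$, and as the product on $\mathds{T}^{\otimes 2}$ is componentwise and $\mathcal{I}$ is a two-sided ideal, $\Delta(x)\vee(\mathcal{I}\otimes\mathds{T}+\mathds{T}\otimes\mathcal{I})\subseteq\mathcal{I}\otimes\mathds{T}+\mathds{T}\otimes\mathcal{I}$, so $x\vee z\in K$, and symmetrically $z\vee x\in K$. Hence $\mathcal{I}\subseteq K$, which is the second reduction.

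It then remains to assemble the Hom-bialgebra on $\mathds{T}/\mathcal{I}$, and every ingredient is already available: $(\mathds{T}/\mathcal{I},\vee,\al,\mathds{1})$ is unital Hom-associative by the very definition of $\mathcal{I}$, with $\al$ descending because Lemma~\ref{alphavee} gives $\al(c(\phi,\psi,\chi))=c(\al\phi,\al\psi,\al\chi)\in\mathcal{I}$; $(\mathds{T}/\mathcal{I},\bar\Delta,\id,\bar\epsilon)$ is co-unital Hom-coassociative because this reduces to coassociativity and the counit identities $(\id\otimes\epsilon)\circ\Delta=\id=(\epsilon\otimes\id)\circ\Delta$ (the latter valid on $\mathds{T}$ since $\varphi_J=\mathds{1}$ only for $J=\emptyset$), cf.\ Proposition~\ref{prop:coalgebra}; and the five compatibility conditions in part (iii) of the definition of a Hom-bialgebra are, respectively, Lemma~\ref{Deltavee}, the equality $\Delta(\mathds{1})=\mathds{1}\otimes\mathds{1}$, $\epsilon(\mathds{1})=1$, the multiplicativity of $\epsilon$, and $\epsilon\circ\al=\epsilon$ --- each passing to $\mathds{T}/\mathcal{I}$ since $\pi$ is onto. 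The main obstacle is proving that $\Delta$ descends: it needs both the explicit Sweedler computation of $\Delta$ on a generator, rewritten so that each term is manifestly in $\mathcal{I}\otimes\mathds{T}+\mathds{T}\otimes\mathcal{I}$, and the realization that, because $\vee$ is not associative and elements of $\mathcal{I}$ are arbitrarily bracketed $\vee$-products, one should verify that $K$ is an ideal rather than attempt a direct induction.
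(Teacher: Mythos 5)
Your proof is correct and follows essentially the same route as the paper's: both reduce the statement to showing that $\mathcal{I}$ is a coideal on which $\epsilon$ vanishes, verify this on the generators $c(\phi,\psi,\chi)$ using the compatibility of $\Delta$ with $\vee$ (Lemma~\ref{Deltavee}), the relation $\Delta\circ\al=(\al\otimes\al)\circ\Delta$, and the identity $A\otimes B-A'\otimes B'=(A-A')\otimes B+A'\otimes(B-B')$ (the paper's Lemma~\ref{lem:coideal1}), and then descend the remaining axioms through the surjection. The only substantive difference is that you make explicit, via the ideal $K$, the reduction to generators that the paper leaves implicit, and you spell out the choice $\beta=\id$ for the coalgebra twist; both are consistent with the paper.
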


We first prove an obvious lemma:

\begin{lem}\label{lem:coideal1}
Let $e,f,g,h$ be elements in $ {\mathds{T}}$.
If $e - f \in {\mathcal I}$ and $g-h \in {\mathcal I}$, then $e \otimes g - f \otimes h \in {\mathcal I} \otimes {\mathds{T}} + {\mathds{T}} \otimes {\mathcal I}$.
\end{lem}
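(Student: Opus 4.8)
The plan is to use the standard telescoping decomposition of a difference of two decomposable tensors. First I would record the identity
\[
e \otimes g - f \otimes h = (e - f) \otimes g + f \otimes (g - h),
\]
which holds in ${\mathds{T}} \otimes {\mathds{T}}$ by bilinearity of the tensor product (add and subtract $f \otimes g$). Then I would invoke the two hypotheses separately: since $e - f \in {\mathcal I}$ and $g \in {\mathds{T}}$, the first summand $(e - f) \otimes g$ lies in ${\mathcal I} \otimes {\mathds{T}}$; and since $f \in {\mathds{T}}$ and $g - h \in {\mathcal I}$, the second summand $f \otimes (g - h)$ lies in ${\mathds{T}} \otimes {\mathcal I}$. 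Summing the two memberships yields $e \otimes g - f \otimes h \in {\mathcal I} \otimes {\mathds{T}} + {\mathds{T}} \otimes {\mathcal I}$, as claimed.

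Honestly there is no real obstacle here; the only point worth highlighting is the choice of the split: one peels off the left factor first and the right factor second (equivalently, one inserts the intermediate term $f \otimes g$), so that each of the two resulting tensors is absorbed by exactly one of the subspaces ${\mathcal I} \otimes {\mathds{T}}$ and ${\mathds{T}} \otimes {\mathcal I}$. Both hypotheses $e - f \in {\mathcal I}$ and $g - h \in {\mathcal I}$ are genuinely used, one for each summand. An alternative would be to reduce by linearity to the case $f = h = 0$, but the one-line identity above is the most transparent route.

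I anticipate this lemma is the elementary input needed to show that $\Delta$ and $\epsilon$ descend to the quotient ${\mathds{T}}/{\mathcal I}$: applied to the defining generators $(\phi \vee \psi) \vee \al(\chi) - \al(\phi) \vee (\psi \vee \chi)$ of ${\mathcal I}$, together with the compatibility relation (\ref{compatibility}) and coassociativity, it will give $\Delta({\mathcal I}) \subseteq {\mathcal I} \otimes {\mathds{T}} + {\mathds{T}} \otimes {\mathcal I}$, hence a well-defined coproduct on ${\mathds{T}}/{\mathcal I}$.
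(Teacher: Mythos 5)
Your proof is correct and is essentially identical to the paper's: both rest on the same telescoping identity $e \otimes g - f \otimes h = (e-f) \otimes g + f \otimes (g-h)$, with the two summands landing in ${\mathcal I} \otimes {\mathds{T}}$ and ${\mathds{T}} \otimes {\mathcal I}$ respectively. Nothing further is needed.
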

\begin{proof}
We just have to use the algebraic identity $e \otimes g - f \otimes h= (e-f) \otimes g + f \otimes (g-h)$.
\end{proof}

\begin{lem}\label{lem:coideal2}
The ideal ${\mathcal I}$ is a coideal of $({\mathds{T}}, \Delta,\epsilon)$.
\end{lem}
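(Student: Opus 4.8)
The plan is to verify directly the two defining properties of a coideal, namely $\epsilon(\mathcal{I})=0$ and $\Delta(\mathcal{I})\subseteq\mathcal{I}\otimes\mathds{T}+\mathds{T}\otimes\mathcal{I}$. Since $\vee$ is not associative, the first step is to fix the meaning of ``the bilateral ideal generated by'' the elements $c(\phi,\psi,\chi)=(\phi\vee\psi)\vee\al(\chi)-\al(\phi)\vee(\psi\vee\chi)$ as the filtered union $\mathcal{I}=\bigcup_{k\ge 0}\mathcal{I}_k$, where $\mathcal{I}_0$ is the linear span of all $c(\phi,\psi,\chi)$ (with $\phi,\psi,\chi$ leaf weighted trees or the unit $\mathds{1}$) and $\mathcal{I}_{k+1}=\mathcal{I}_k+\mathds{T}\vee\mathcal{I}_k+\mathcal{I}_k\vee\mathds{T}$. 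Each of the two properties is then established by induction on $k$.

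For the counit, $\epsilon$ is multiplicative with respect to $\vee$ and satisfies $\epsilon\circ\al=\epsilon$ (both immediate from $\epsilon(\mathds{1})=1$, $\epsilon|_{B_n}=0$, and from the fact that $\varphi\vee\psi$ is again a tree, resp.\ $\al(\varphi)$ is again a tree, as soon as at least one of the arguments is). Hence $\epsilon(c(\phi,\psi,\chi))=\epsilon(\phi)\epsilon(\psi)\epsilon(\chi)-\epsilon(\phi)\epsilon(\psi)\epsilon(\chi)=0$, so $\epsilon(\mathcal{I}_0)=0$; and if $\epsilon(\mathcal{I}_k)=0$, then $\epsilon(w\vee x)=\epsilon(w)\epsilon(x)=0=\epsilon(x\vee w)$ for $x\in\mathcal{I}_k$ and $w\in\mathds{T}$, whence $\epsilon(\mathcal{I}_{k+1})=0$.

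For the coproduct the key computation is on a single generator. Using the compatibility relation (\ref{compatibility}) twice together with $\Delta\circ\al=(\al\otimes\al)\circ\Delta$ from (\ref{Deltaal}), and writing $\Delta\phi=\sum\phi_1\otimes\phi_2$, $\Delta\psi=\sum\psi_1\otimes\psi_2$, $\Delta\chi=\sum\chi_1\otimes\chi_2$ in Sweedler notation (each component being, by (\ref{def:coproduct}), again a leaf weighted tree or $\mathds{1}$), one gets
\begin{multline*}
\Delta\bigl(c(\phi,\psi,\chi)\bigr)=\sum\bigl[(\phi_1\vee\psi_1)\vee\al(\chi_1)\bigr]\otimes\bigl[(\phi_2\vee\psi_2)\vee\al(\chi_2)\bigr]\\
-\sum\bigl[\al(\phi_1)\vee(\psi_1\vee\chi_1)\bigr]\otimes\bigl[\al(\phi_2)\vee(\psi_2\vee\chi_2)\bigr],
\end{multline*}
the sums running over all Sweedler indices. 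For each choice of indices, $(\phi_1\vee\psi_1)\vee\al(\chi_1)-\al(\phi_1)\vee(\psi_1\vee\chi_1)=c(\phi_1,\psi_1,\chi_1)\in\mathcal{I}$ and likewise for the index $2$; so Lemma \ref{lem:coideal1}, applied with $e=(\phi_1\vee\psi_1)\vee\al(\chi_1)$, $f=\al(\phi_1)\vee(\psi_1\vee\chi_1)$ and $g,h$ the corresponding index-$2$ expressions, shows that each summand lies in $\mathcal{I}\otimes\mathds{T}+\mathds{T}\otimes\mathcal{I}$, hence so does $\Delta(c(\phi,\psi,\chi))$, and $\Delta(\mathcal{I}_0)\subseteq\mathcal{I}\otimes\mathds{T}+\mathds{T}\otimes\mathcal{I}$ by linearity. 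For the inductive step, if $x\in\mathcal{I}_k$ has $\Delta(x)=\sum_i a_i\otimes b_i+\sum_j a'_j\otimes b'_j$ with $a_i,b'_j\in\mathcal{I}$ and $b_i,a'_j\in\mathds{T}$, then for $w\in\mathds{T}$ the compatibility relation yields
\[
\Delta(w\vee x)=\Delta(w)\vee\Delta(x)=\sum_{(w),i}(w_1\vee a_i)\otimes(w_2\vee b_i)+\sum_{(w),j}(w_1\vee a'_j)\otimes(w_2\vee b'_j),
\]
and $w_1\vee a_i,\ w_2\vee b'_j\in\mathcal{I}$ because $\mathcal{I}$ is an ideal, so $\Delta(w\vee x)\in\mathcal{I}\otimes\mathds{T}+\mathds{T}\otimes\mathcal{I}$; symmetrically for $x\vee w$, and hence $\Delta(\mathcal{I}_{k+1})\subseteq\mathcal{I}\otimes\mathds{T}+\mathds{T}\otimes\mathcal{I}$.

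I expect the only real obstacle to be organizational rather than substantive: because $\vee$ is non-associative one cannot simply write a general element of $\mathcal{I}$ as $w_1\vee c(\phi,\psi,\chi)\vee w_2$, so the induction over the filtration $\mathcal{I}_k$ is what propagates both properties; once that is in place, the argument rests only on Lemma \ref{lem:coideal1} and on the fact that $\Delta$ is a morphism for the grafting (Lemma \ref{Deltavee}) which in addition intertwines $\al$ with $\al\otimes\al$.
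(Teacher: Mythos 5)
Your proof is correct and follows essentially the same route as the paper: the heart of both arguments is the computation of $\Delta$ on a generator $c(\phi,\psi,\chi)$ via Lemma \ref{Deltavee} and equation (\ref{Deltaal}), followed by an application of Lemma \ref{lem:coideal1} to each pair of matching terms (your labelling of $e,f,g,h$ is in fact the correct way to invoke that lemma). The only difference is that you make explicit, via the filtration $\mathcal{I}_k$ and the induction it supports, the reduction ``it suffices to check on generators,'' which the paper asserts in one line; this is a welcome clarification given the non-associativity of $\vee$, not a different argument.
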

\begin{proof}
We have to check that:
\begin{itemize}
  \item $\epsilon({\mathcal I})=0$.
  \item  $\Delta$ maps ${\mathcal I} $ to ${\mathcal I} \otimes {\mathds{T}} + {\mathds{T}} \otimes {\mathcal I}$.
 \end{itemize}
 The first point is trivial by definition of $\epsilon$.
 For the second point,
since $\Delta$ and $\vee$ are compatible (Lemma \ref{Deltavee}) it suffices indeed to show that generating elements of ${\mathcal I} $, i.e.
elements of the form $c(\phi,\psi,\chi)=( (\phi \vee \psi) \vee \al (\chi) -\al (\phi) \vee (\psi \vee \chi) $,
 are mapped to ${\mathcal I} \otimes {\mathds{T}} + {\mathds{T}} \otimes {\mathcal I} $.

For arbitrary $\phi \in B_n$, $\psi \in B_m$ and $\chi \in B_k$, we have on the one hand:
\begin{align}
  \Delta\left( (\phi \vee \psi) \vee \al (\chi) \right)
 & = \left(\Delta(\phi) \vee \Delta(\psi) \right) \vee \Delta\left( \al (\chi) \right), \quad \mbox{by Lemma  \ref{Deltavee}} \nonumber \\
 &= \left(\Delta(\phi) \vee \Delta(\psi) \right) \vee \al ^{\otimes^2}\left( \Delta (\chi) \right), \quad \mbox{by equation (\ref{Deltaal})} \nonumber \\
& = \sum_{\substack{I \subseteq \{1,\dots, n\}  \\ J \subseteq \{1,\dots, m\} \\K \subseteq \{1,\dots, k\}}}  \left( \left( \phi_I \otimes \phi_{I^c} \right) \vee \left(\psi_J \otimes \psi_{J^c} \right) \right) \vee \left( \al (\chi_K) \otimes \al (\chi_{K^c})\right) \nonumber \\
&  = \sum_{\substack{I \subseteq \{1,\dots, n\}  \\ J \subseteq \{1,\dots, m\} \\K \subseteq \{1,\dots, k\}}} \left( \left( \phi_I \vee \psi_J \right) \vee  \al (\chi_K) \right) \otimes \left( \left( \phi_{I^c} \vee  \psi_{J^c} \right)  \vee  \al (\chi_{K^c})\right) \nonumber,
\end{align}
while on the other hand, we have
\begin{align}
  \Delta\left( \al(\phi) \vee (\psi \vee  \chi) \right)  = \sum_{\substack{I \subseteq \{1,\dots, n\}  \\ J \subseteq \{1,\dots, m\} \\K \subseteq \{1,\dots, k\}}}  \left( \al( \phi_I) \vee \left( \psi_J  \vee  \chi_K \right)\right)  \otimes \left( \al(\phi_{I^c}) \vee  \left( \psi_{J^c}   \vee   \chi_{K^c} \right)\right). \nonumber
\end{align}
Applying Lemma \ref{lem:coideal1} to $e= \left( \phi_I \vee \psi_J \right)\vee  \al (\chi_K) , f=\left( \phi_{I^c} \vee  \psi_{J^c} \right)  \vee  \al (\chi_{K^c}),
g=  \al( \phi_I) \vee \left( \psi_J  \vee  \chi_K \right), h= \al(\phi_{I^c}) \vee  \left( \psi_{J^c}   \vee   \chi_{K^c}\right)$,
we see that the difference between $ \Delta\left( (\phi \vee \psi) \vee \al (\chi) \right)$ and $ \Delta\left( \al(\phi) \vee (\psi \vee  \chi) \right) $
is an element in  ${\mathcal I} \otimes {\mathds{T}} + {\mathds{T}} \otimes {\mathcal I} $, which completes the proof.
\end{proof}

\begin{proof}[Proof of the proposition]
By definition of the ideal ${\mathcal I}$, the quotient space $\mathds{T}/{\mathcal I}$,  is a Hom-associative algebra
when equipped with $\vee$, and  ${\mathds{1}} $ is a unit. From Proposition \ref{prop:coalgebra}, it also follows that $({\mathds{T}}/{\mathcal I}, \Delta,\epsilon)$
is a coassociative algebra with counit $\epsilon$. According to Lemma \ref{Deltavee}, these induced structures are compatible.
\end{proof}

We now intend to define an antipode on the free Hom-associative algebra with $1$-generator. We first define $S: {\mathds{T}} \longrightarrow {\mathds{T}}$ by:
\begin{itemize}
  \item $S(\mathds{1})=\mathds{1} $;
  \item $S(|, a_1)= - (|, a_1)$, or $S\left( \begin{tikzpicture}[xscale=0.3, yscale=0.3,baseline={([yshift=-.8ex]current bounding box.center)}]
\draw[line width=1pt] (0,0) -- (0,1.5) ;
\draw (0,1.8) node {$a_1$};
\end{tikzpicture}\right) = - \begin{tikzpicture}[xscale=0.3, yscale=0.3,baseline={([yshift=-.8ex]current bounding box.center)}]
\draw[line width=1pt] (0,0) -- (0,1.5);
\draw (0,1.8) node {$a_1$};
\end{tikzpicture}$, where $a_1$ is a non-negative integer;
  \item $S$ is an antimorphism of  $({\mathds{T}},\vee)$, i.e., $S(\varphi \vee \psi)= S(\psi) \vee S(\varphi)$,  for any $\varphi, \psi \in B$.
\end{itemize}

Explicitly, $S$ maps a tree with $n$-leaves to $(-1)^n$ times the image of that tree through a vertical symmetry applied at each node, for example:
$$
S\left(
\begin{tikzpicture}[xscale=0.35, yscale=0.35,baseline={([yshift=-.8ex]current bounding box.center)}]
\draw[line width=1pt] (0,1) -- (0,1.5) -- (-1.5,3) -- (-2.5,4);
\draw[line width=1pt] (-2,3.5) -- (-1.5,4);
\draw[line width=1pt] (-1.5,3) -- (-0.5,4);
\draw[line width=1pt] (0,1.5) -- (1.75,4) ;
\draw[line width=1pt] (1.25,3.25) -- (0.5,4);


\draw (-2.5,4.5) node {$2$};
\draw (-1.5,4.5) node {$4$};
\draw (-0.5,4.5) node {$0$};
\draw (0.5,4.5) node {$3$};
\draw (1.75,4.5) node {$2$};
\end{tikzpicture}
 \right)= (-1)^5
 \begin{tikzpicture}[xscale=0.35, yscale=0.35,baseline={([yshift=-.8ex]current bounding box.center)}]
\draw[line width=1pt] (0,1) -- (0,1.5) -- (1.5,3) -- (2.5,4);
\draw[line width=1pt] (2,3.5) -- (1.5,4);
\draw[line width=1pt] (1.5,3) -- (0.5,4);
\draw[line width=1pt] (0,1.5) -- (-1.75,4) ;
\draw[line width=1pt] (-1.25,3.25) -- (-0.5,4);


\draw (2.5,4.5) node {$2$};
\draw (1.5,4.5) node {$4$};
\draw (0.5,4.5) node {$0$};
\draw (-0.5,4.5) node {$3$};
\draw (-1.75,4.5) node {$2$};
\end{tikzpicture}
$$

The map $S$ clearly goes to the quotient and induces an endomorphism of the free Hom-associative algebra with $1$-generator. The main result of this section is:

\begin{thm}\label{thm:HopfAlgOnTrees}
 The free Hom-associative algebra with $1$-generator is an $(\al,id )$-Hom-Hopf algebra
 (in the sense of Definition \ref{def:hom-Hopf-algebra_in_our_sense}) when equipped with the antipode $S$.
\end{thm}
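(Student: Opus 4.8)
The plan is this. The Hom-bialgebra structure on $\mathds{T}/{\mathcal I}$ has already been established above, and (as noted just before the statement) $S$ descends to an anti-endomorphism of $\mathds{T}/{\mathcal I}$; so all that is left is to verify the three conditions of Definition~\ref{def:hom-Hopf-algebra_in_our_sense}. Conditions (a) $S\circ\al=\al\circ S$ and (b) $S\circ\eta=\eta$, $\epsilon\circ S=\epsilon$ are immediate from the explicit combinatorial descriptions: $S$ reflects the underlying tree at every node (and inserts the sign $(-1)^n$) while leaving the leaf-weights untouched, whereas $\al$ only increments each weight, so the two commute; and $S(\mathds{1})=\mathds{1}$ while $\epsilon$ equals $1$ on $\mathds{1}$ and $0$ on every weighted tree, reflected or not. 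The whole substance is condition (c): for every $x$ there must be $k\in{\mathbb N}$ with $\al^k(S\star\id)(x)=\al^k(\id\star S)(x)=\eta\circ\epsilon(x)$. I would prove this by induction on the number of leaves, working inside $\mathds{T}/{\mathcal I}$, where Hom-associativity is an honest identity.

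The base cases are short. For $\mathds{1}$, from $\Delta\mathds{1}=\mathds{1}\otimes\mathds{1}$ one gets $(S\star\id)(\mathds{1})=(\id\star S)(\mathds{1})=\mathds{1}\vee\mathds{1}=\mathds{1}=\eta\circ\epsilon(\mathds{1})$, so $k=0$. Writing $|_a$ for the one-leaf tree of weight $a$, this element is primitive, $\Delta(|_a)=\mathds{1}\otimes|_a+|_a\otimes\mathds{1}$, and since $S(|_a)=-|_a$ and $\mathds{1}\vee u=\al(u)$ one finds $(S\star\id)(|_a)=\al(|_a)-\al(|_a)=0=\eta\circ\epsilon(|_a)$, symmetrically for $\id\star S$, again with $k=0$. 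For the inductive step, a leaf-weighted $n$-tree with $n\geq 2$ factors uniquely as $\varphi=\varphi_1\vee\varphi_2$ with $\varphi_1,\varphi_2$ leaf-weighted trees of strictly fewer leaves; assuming the statement for $\varphi_i$ with index $k_i$, I claim it holds for $\varphi$ with index $k_1+k_2+1$ (the same $k$ serving both sides). The one technical ingredient is the reassociation identity
\[
\al\bigl((x\vee y)\vee(z\vee w)\bigr)=\al^{2}(x)\vee\bigl((y\vee z)\vee\al(w)\bigr)\qquad\text{in }\mathds{T}/{\mathcal I},
\]
which is a three-fold application of Hom-associativity combined with the multiplicativity of $\al$ (Lemma~\ref{alphavee}).

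Now expand $\Delta(\varphi_1\vee\varphi_2)=\Delta(\varphi_1)\cdot\Delta(\varphi_2)$ by Lemma~\ref{Deltavee} and apply $\mu\circ(S\otimes\id)$; because $S$ reverses $\vee$, each term takes the shape $(x\vee y)\vee(z\vee w)$ in which $x,w$ come from $\Delta(\varphi_2)$ and $y\vee z$, when one sums over the Sweedler components coming from $\Delta(\varphi_1)$, runs over the components of $(S\star\id)(\varphi_1)$. Applying the reassociation identity, then $\al^{k_1}$, and using $\al^{k_1}(S\star\id)(\varphi_1)=\epsilon(\varphi_1)\mathds{1}$ together with $\mathds{1}\vee u=\al(u)$, the inner factor collapses and what remains is $\epsilon(\varphi_1)\,\al^{k_1+2}\bigl((S\star\id)(\varphi_2)\bigr)$; a last application of $\al^{k_2}$ and of the hypothesis for $\varphi_2$ produces $\epsilon(\varphi_1)\epsilon(\varphi_2)\mathds{1}=\epsilon(\varphi_1\vee\varphi_2)\mathds{1}=\eta\circ\epsilon(\varphi_1\vee\varphi_2)$, as wanted. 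The identity for $\id\star S$ is obtained the same way, with the pair that recollects to $(\id\star S)(\varphi_2)$ now occupying the middle slot of $(x\vee y)\vee(z\vee w)$. Finally, an arbitrary element of $\mathds{T}/{\mathcal I}$ is a linear combination of basis elements, and one may always enlarge $k$ since $\al$ fixes $\mathds{1}$, so that $\al^{k+1}(S\star\id)(x)=\al(\eta\circ\epsilon(x))=\eta\circ\epsilon(x)$; taking the maximum of the finitely many indices gives condition (c) in general.

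The genuinely delicate point is the bookkeeping of the powers of $\al$: each reassociation forces one extra $\al$, so the invertibility index grows along the induction and one cannot expect $S\star\id=\eta\circ\epsilon$ on the nose — indeed $S\star\id$ is a nonzero but $\al$-torsion endomorphism of $\mathds{T}/{\mathcal I}$ (it vanishes on the quotient of $\mathds{T}/{\mathcal I}$ by its ideal of $\al$-torsion elements, which by the remark following Proposition~\ref{prop:stableByProduct} is an honest connected cocommutative Hopf algebra). This is exactly why $\mathds{T}/{\mathcal I}$ fails to be a Hom-Hopf algebra in the sense of \cite{MS2010a} yet satisfies Definition~\ref{def:hom-Hopf-algebra_in_our_sense}. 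Apart from this, the work is routine: one needs only to check that the reassociation identity is applied with the $\al$'s in the right positions and that the Sweedler sums factor as claimed, which follow from Lemmas~\ref{alphavee} and \ref{Deltavee} and the compatibility of $\Delta$ with $\vee$.
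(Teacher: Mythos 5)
Your proof is correct and follows essentially the same route as the paper's: induction on the grafting decomposition with base cases $\mathds{1}$ and $B_1$, using $\Delta(\varphi_1\vee\varphi_2)=\Delta(\varphi_1)\cdot\Delta(\varphi_2)$, the anti-morphism property of $S$, and two applications of Hom-associativity to reassociate $(S(\varphi_{2,I})\vee S(\varphi_{1,J}))\vee(\varphi_{1,J^c}\vee\varphi_{2,I^c})$ so that the Sweedler sum over $J$ recollects to $(S\star\id)(\varphi_1)$. The only (harmless) divergence is bookkeeping: since $\eta\circ\epsilon$ vanishes on every genuine tree, the paper gets away with invoking the inductive hypothesis for one factor only and the index $k+1$, whereas you track both factors and arrive at $k_1+k_2+1$; your explicit treatment of the $\id\star S$ side and of linear combinations fills in steps the paper leaves implicit.
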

\begin{proof}
It is obvious that $S$ commutes with $\al$ and  $S({\mathcal I}) \subset {\mathcal I}$, so $S$ goes to the quotient.
The only non-immediate result is that $S$ satisfies (\ref{eq:antipode_condition}).
The result is true for $\mathds{1}$ and for any element in $B_1$, i.e., of the form \begin{tikzpicture}[xscale=0.3, yscale=0.3,baseline={([yshift=-.8ex]current bounding box.center)}]
\draw[line width=1pt] (0,0) -- (0,1.5) ;
\draw (0,1.8) node {$a_1$};
\end{tikzpicture} where $a_1$ is a non-negative integer.
Let us prove that if the result holds true for leaf weighted trees $\varphi,\psi $ it also holds true for $\varphi \vee \psi $. Let $k$ be be the non-negative number such that
$\al^k \circ \vee \circ (S \otimes id) \circ \Delta (\varphi)=0$,  then
\begin{align}
 & \al^{k+1} \circ \vee \circ (S \otimes id) \circ \Delta (\varphi \vee \psi) = \nonumber \\
 & \qquad = \sum_{J,I} \al^{k+1} \circ \vee \circ (S \otimes id) (\varphi_J \vee \psi_I) \otimes (\varphi_{J^c} \vee \psi_{I c})   \nonumber \\
 & \qquad =  \sum_{J,I} \al^{k+1}\left( (S(\psi_I) \vee S(\varphi_J)) \vee (\varphi_{J^c} \vee \psi_{I c})\right)     \nonumber \\
& \qquad =  \sum_{J,I} \al^{k}\left( \al^2(S(\psi_I)) \vee \left(  \al (S(\varphi_J)) \vee (\varphi_{J^c} \vee \psi_{I c}) \right) \right), \mbox{ using Hom-associativity} \nonumber \\
& \qquad = \sum_{J,I} \al^{k} (\al^2(S(\psi_I)) \vee \left(  ( S(\varphi_J) \vee \varphi_{J^c}) \vee \al(\psi_{I c})  \right), \mbox{ using again Hom-associativity} \nonumber \\
& \qquad = 0. \nonumber
\end{align}
An induction step completes the proof.
\end{proof}

The antipode $S$ is an inverse in the usual sense for quite a few trees.
By inverse in the usual sense we mean that we can take $k=0$ in (\ref{eq:antipode_condition}).
Let us call \textbf{ferns} weighted trees whose underlying tree is such that each node is
related to a leaf. For instance, the following trees are ferns:
\begin{tikzpicture}[xscale=0.4, yscale=0.4,baseline={([yshift=-.8ex]current bounding box.center)}]
\draw[line width=1pt] (0,0) -- (0,1) -- (1,2);
\draw[line width=1pt] (0.3,1.3) -- (-0.2,2);
\draw[line width=1pt] (0.6,1.6) -- (0.35,2);
\draw[line width=1pt] (0,1) -- (-1,2);
\end{tikzpicture} and
\begin{tikzpicture}[xscale=0.4, yscale=0.4,baseline={([yshift=-.8ex]current bounding box.center)}]
\draw[line width=1pt] (0,0) -- (0,1) -- (1,2);
\draw[line width=1pt] (0.3,1.3) -- (-0.2,2);
\draw[line width=1pt] (0.07,1.6) -- (0.45,2);
\draw[line width=1pt] (0,1) -- (-1,2);
\end{tikzpicture} while
\begin{tikzpicture}[xscale=0.4, yscale=0.4,baseline={([yshift=-.8ex]current bounding box.center)}]
\draw[line width=1pt] (0,0) -- (0,1) -- (1,2);
\draw[line width=1pt] (0,1) -- (-1,2);
\draw[line width=1pt] (0.6,1.6) -- (0.2,2);
\draw[line width=1pt] (-0.6,1.6) -- (-0.2,2);
\end{tikzpicture}, is not a fern.


We still call \textbf{space of ferns} the subspace of the free vector  space $\mathds{T} $  generated by ferns and the subspace of the free Hom-associative algebra with $1$-generator which is the image of ferns in $\mathds{T} $ through the canonical projection from $ \mathds{T}  $
to the free Hom-associative algebra with $1$-generator.

Let us investigate the subspace of the free Hom-associative algebra with $1$-generator $\mathds{T}/{\mathcal I}$ made of all elements with invertibility index $0$, i.e. the subspace of all elements $g \in \mathds{T}/{\mathcal I}$ such that the following relation holds
\begin{equation} \label{eq:antipodeK0}
 \vee \circ (S \otimes \id) \circ \Delta (g)= \vee \circ( \id \otimes S) \circ \Delta (g)= \eta \circ \epsilon \, (g).
\end{equation}

\begin{prop}\label{prop:invert-index-ferns}
The subspace of the free Hom-associative algebra with $1$-generator $\mathds{T}/{\mathcal I}$ made of all elements with invertibility index $0$ is stable under left or right multiplication under elements in $B_1$.
It contains the space of ferns, as well as the spaces $B_i, i=1,2,3,4$.
\end{prop}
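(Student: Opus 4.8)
The plan is to combine a stability statement under multiplication by $B_1$ with the fact that ferns are built out of $1$-trees by successive graftings with $1$-trees, leaving only one genuinely new computation. Throughout I set $P:=\vee\circ(S\otimes\id)\circ\Delta$ and $Q:=\vee\circ(\id\otimes S)\circ\Delta$, two linear endomorphisms of $\mathds{T}/\mathcal{I}$, so that the subspace under study is $\{g\in\mathds{T}/\mathcal{I} : P(g)=Q(g)=\eta\circ\epsilon(g)\}$; it is a linear subspace because $P$, $Q$ and $\eta\circ\epsilon$ are linear, so it suffices to treat basis elements of $\mathds{T}/\mathcal{I}$, i.e. $\mathds 1$ and leaf weighted trees.

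\emph{Step 1 (behaviour under a $B_1$-factor).} Fix $e\in B_1$ and recall that $\Delta(e)=e\otimes\mathds 1+\mathds 1\otimes e$, $S(e)=-e$ and $\epsilon(e)=0$. For a leaf weighted tree $\varphi$ I would expand $\Delta(\varphi\vee e)$ by splitting each subset $K$ of the leaves of $\varphi\vee e$ according to whether or not it contains the leaf of $e$, which gives $\Delta(\varphi\vee e)=\sum_{K}\bigl[(\varphi_K\vee e)\otimes\al(\varphi_{K^c})+\al(\varphi_K)\otimes(\varphi_{K^c}\vee e)\bigr]$ (sum over subsets $K$ of the leaves of $\varphi$, using $\psi\vee\mathds 1=\al(\psi)$); then apply $\vee\circ(S\otimes\id)$ using $S(\varphi_K\vee e)=-e\vee S(\varphi_K)$ and $S\circ\al=\al\circ S$, and collapse the resulting three-fold graftings by Hom-associativity in $\mathds{T}/\mathcal{I}$ together with Lemma~\ref{alphavee}. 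This yields, for every leaf weighted tree $\varphi$ (and trivially for $\varphi=\mathds 1$),
\begin{equation*}
P(\varphi\vee e)=-\al(e)\vee P(\varphi)+P(\varphi)\vee\al(e),\qquad Q(\varphi\vee e)=0,
\end{equation*}
and symmetrically $P(e\vee\varphi)=0$ and $Q(e\vee\varphi)=\al(e)\vee Q(\varphi)-Q(\varphi)\vee\al(e)$. By bilinearity of $\vee$ these four identities hold with an arbitrary $x\in\mathds{T}/\mathcal{I}$ in place of $\varphi$. If moreover $P(x)=Q(x)=\epsilon(x)\mathds 1$, then all right-hand sides vanish because $\al(e)\vee\mathds 1=\mathds 1\vee\al(e)$, while $\eta\circ\epsilon(x\vee e)=\epsilon(x)\epsilon(e)\mathds 1=0$ since $\epsilon(e)=0$; hence $x\vee e$ and $e\vee x$ both have invertibility index $0$. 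This proves the stability claim.

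\emph{Step 2 (base case and ferns).} One checks directly that $P(\mathds 1)=Q(\mathds 1)=\mathds 1=\eta\circ\epsilon(\mathds 1)$ and that for $e\in B_1$ one has $P(e)=S(e)\vee\mathds 1+\mathds 1\vee e=-\al(e)+\al(e)=0=\eta\circ\epsilon(e)$, and likewise $Q(e)=0$, so $\{\mathds 1\}\cup B_1$ lies in the index-$0$ subspace. A leaf weighted fern with $n\ge 2$ leaves decomposes at its root as $\varphi_1\vee\varphi_2$ where, by definition of a fern, one of $\varphi_1,\varphi_2$ is a single leaf (an element of $B_1$) and the other is again a fern, with $n-1$ leaves; hence, by induction, every leaf weighted fern is obtained from an element of $B_1$ by a finite sequence of graftings, each of which is a left or right multiplication by an element of $B_1$. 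By Step~1 every leaf weighted fern therefore has invertibility index $0$, so the index-$0$ subspace contains the space of ferns. Since the $1$-tree, the unique $2$-tree, both $3$-trees and four of the five $4$-trees are ferns, this already accounts for all of $B_1$, $B_2$, $B_3$ and for every element of $B_4$ except those whose underlying shape is $(|\vee|)\vee(|\vee|)$.

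\emph{Step 3 (the balanced $4$-tree).} Write $\varphi=\bigl((|,a)\vee(|,b)\bigr)\vee\bigl((|,c)\vee(|,d)\bigr)$. If $a\ge 1$ and $b\ge 1$ then $(|,a)\vee(|,b)=\al\bigl((|,a-1)\vee(|,b-1)\bigr)$, and Hom-associativity in $\mathds{T}/\mathcal{I}$ rewrites $\varphi$ as $\bigl(((|,a-1)\vee(|,b-1))\vee(|,c)\bigr)\vee(|,d+1)$, which is a fern; so $\varphi$ has index $0$ by Step~2, and symmetrically if $c\ge 1$ and $d\ge 1$. In the remaining cases I would compute $P(\varphi)$ and $Q(\varphi)$ by hand, expanding $\Delta(\varphi)$ into its sixteen summands $\varphi_K\otimes\varphi_{K^c}$, applying $S$, and reducing each term to a scalar multiple of $\al\bigl((|,a)\vee(|,b)\bigr)\vee\al\bigl((|,c)\vee(|,d)\bigr)$ by repeated use of Hom-associativity and Lemma~\ref{alphavee}; the signs and multiplicities then cancel, so $P(\varphi)=Q(\varphi)=0=\eta\circ\epsilon(\varphi)$, and with Step~2 this yields $B_4\subseteq\{g:P(g)=Q(g)=\eta\circ\epsilon(g)\}$, completing the proof. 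I expect this last computation to be the main obstacle: in contrast with the proof of Theorem~\ref{thm:HopfAlgOnTrees}, no outer power of $\al$ is available here, so the sixteen terms must cancel exactly, and one has to keep track of the shifted weights produced by $\al$ when collapsing the three- and four-fold graftings; verifying the four identities of Step~1 is routine but also needs some care with the two nested applications of Hom-associativity.
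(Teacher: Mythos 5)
Your proof follows essentially the same route as the paper's: stability under left/right grafting by elements of $B_1$ via Hom-associativity (your Step~1 identities $P(\varphi\vee e)=-\al(e)\vee P(\varphi)+P(\varphi)\vee\al(e)$ and $Q(\varphi\vee e)=0$ are a cleaner, fully written-out version of the paper's displayed computation), induction over ferns obtained by successive $B_1$-graftings, and a direct check of the unique non-fern element of $B_4$ --- a computation the paper itself also leaves to the reader, so your deferral there matches the paper's level of detail (and the cancellation does work out; e.g.\ for the all-zero-weight balanced tree the coefficients $1+1+6-4-2-2=0$). Your extra reduction in Step~3, rewriting the balanced tree as a fern when $a,b\ge 1$ or $c,d\ge 1$, is a pleasant refinement but not a different method.
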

\begin{proof}
Let $\varphi$ be an element in $\mathds{T}$  where (\ref{eq:antipodeK0}) holds, then it also holds for any tree of the form \begin{tikzpicture}[xscale=0.4, yscale=0.4,baseline={([yshift=-.8ex]current bounding box.center)}]
\draw[line width=1pt] (0,0) -- (0,1.3) ;
\draw (0,1.5) node {$a_1$};
\end{tikzpicture} $\vee \varphi$ (in this case we need to use the Hom-associativity of $\vee$):

\begin{align}
  \vee \circ (S \otimes id ) \circ \Delta (\begin{tikzpicture}[xscale=0.4, yscale=0.4,baseline={([yshift=-.8ex]current bounding box.center)}]
\draw[line width=1pt] (0,0) -- (0,1.3) ;
\draw (0,1.5) node {$a_1$};
\end{tikzpicture} \vee \varphi) & = \vee \circ (S \otimes id ) \circ \left( \Delta (\begin{tikzpicture}[xscale=0.4, yscale=0.4,baseline={([yshift=-.8ex]current bounding box.center)}]
\draw[line width=1pt] (0,0) -- (0,1.3) ;
\draw (0,1.5) node {$a_1$};
\end{tikzpicture} ) \vee  \Delta(\varphi)\right) \nonumber \\
& = \vee \circ (S \otimes id ) \circ \left(  (\begin{tikzpicture}[xscale=0.4, yscale=0.4,baseline={([yshift=-.8ex]current bounding box.center)}]
\draw[line width=1pt] (0,0) -- (0,1.3) ;
\draw (0,1.5) node {$a_1$};
\end{tikzpicture} \otimes \mathds{1} + \mathds{1} \otimes \begin{tikzpicture}[xscale=0.4, yscale=0.4,baseline={([yshift=-.8ex]current bounding box.center)}]
\draw[line width=1pt] (0,0) -- (0,1.3) ;
\draw (0,1.5) node {$a_1$};
\end{tikzpicture} ) \vee (\varphi_J \otimes \varphi_{J^c})\right)  \nonumber \\
& = \vee \circ (S \otimes id ) \circ \left( \begin{tikzpicture}[xscale=0.4, yscale=0.4,baseline={([yshift=-.8ex]current bounding box.center)}]
\draw[line width=1pt] (0,0) -- (0,1) -- (1,2);
\draw[line width=1pt] (0,1) -- (-1,2);
\draw (1,2.2) node {$\varphi_J$};
\draw (-1,2.3) node {$\mathds{1}$};
\end{tikzpicture} \otimes \al(\varphi_{J^c})  + \al(\varphi_{J})   \otimes \begin{tikzpicture}[xscale=0.4, yscale=0.4,baseline={([yshift=-.8ex]current bounding box.center)}]
\draw[line width=1pt] (0,0) -- (0,1) -- (1,2);
\draw[line width=1pt] (0,1) -- (-1,2);
\draw (1,2.2) node {$\varphi_{J^c}$};
\draw (-1,2.3) node {$\mathds{1}$};
\end{tikzpicture} \right)  \nonumber
\end{align}
which proves the claim.
It turns out that  this  also proves that relations
 $ \vee \circ (S \otimes id) \circ \Delta (\varphi)= \mu \circ( id \otimes S) \circ \Delta (\varphi)=0$
hold for all ferns, since any fern in $B_{n+1}$ is obtained out of
a fern in $B_n$ by either left or right grafting with an element in $B_1$.
In particular, this relation also holds for all elements in $B_2,B_3$, since those spaces are included
in the space of ferns.
To check it for all elements in $B_4$, it suffices to check it in the unique element which is not a fern, i.e.
\begin{tikzpicture}[xscale=0.4, yscale=0.4,baseline={([yshift=-.8ex]current bounding box.center)}]
\draw[line width=1pt] (0,0) -- (0,1) -- (1,2);
\draw[line width=1pt] (0,1) -- (-1,2);
\draw[line width=1pt] (0.6,1.6) -- (0.2,2);
\draw[line width=1pt] (-0.6,1.6) -- (-0.2,2);
\end{tikzpicture},
 which is a direct computation.
\end{proof}

We give an example of an element in the free Hom-associative algebra with $1$-generator for which the antipode does not satisfy Relation (\ref{eq:antipodeK0}):
\begin{center}
 \begin{tikzpicture}[xscale=0.5, yscale=0.5,baseline={([yshift=-.8ex]current bounding box.center)}]
\draw[line width=1pt] (0,1) -- (0,1.5) -- (-1.5,3) -- (-2.5,4);
\draw[line width=1pt] ((0,1.5) -- (1.5,3) -- (2.5,4);
\draw[line width=1pt] (-2,3.5) -- (-1.5,4);
\draw[line width=1pt] (-1.75,3.7) -- (-2,4);
\draw[line width=1pt] (-1.5,3) -- (-0.5,4);
\draw[line width=1pt] (-0.75,3.7) -- (-1,4);
\draw[line width=1pt] (1.35,2.8) -- (0.25,4);
\draw[line width=1pt] (0.7,3.5) -- (1.2,4);
\draw[line width=1pt] (0.95,3.7) -- (0.7,4);
\draw[line width=1pt] (2.25,3.7) -- (2,4);
 \end{tikzpicture}
 \end{center}

Proof: Left to the reader.

\subsection{Universal enveloping Lie algebra of a Hom-Lie algebra}

Let $(\gg, \brr{\, , \, }, \al)$ be a multiplicative Hom-Lie algebra. Let us apply the so-called Schur functor \cite{Loday} to ${\mathds{T}} $ and $ \gg$. We define ${\mathds{T}}^\gg $ to be the vector space:
$$
 {\mathds{1}} \oplus \bigoplus_{n \geq 1}  B_n \otimes \gg^{\otimes n} .
 $$
Elements of $B_n \otimes \gg^{\otimes n}$ shall be pictured for every $ \phi \in B_n$, $x_1, \dots,x_n \in \gg$,
by inserting, for all $i=1, \dots,n$, the element $ x_i$ at the top of the leaf with label $x_i$:
 $$ \begin{tikzpicture}[xscale=0.5, yscale=0.5,baseline={([yshift=-.8ex]current bounding box.center)}]
\draw[line width=1pt] (0,1) -- (0,1.5) -- (-1.5,3) -- (-2.5,4);
\draw[line width=1pt] (-2,3.5) -- (-1.5,4);
\draw[line width=1pt] (-1.5,3) -- (-0.5,4);
\draw[line width=1pt] (0,1.5) -- (1.75,4) ;
\draw[line width=1pt] (1.25,3.25) -- (0.5,4);


\draw (-2.5,4.5) node {$2$};
\draw (-1.5,4.5) node {$4$};
\draw (-0.5,4.5) node {$0$};
\draw (0.5,4.5) node {$3$};
\draw (1.75,4.5) node {$1$};
\draw (-2.5,5.2) node {$x_1$};
\draw (-1.5,5.2) node {$x_2$};
\draw (-0.5,5.2) node {$x_3$};
\draw (0.5,5.2) node {$x_4$};
\draw (1.75,5.2) node {$x_5$};
\end{tikzpicture}$$
From now, we only write down the weight of a given leaf of an element in   $B_n \otimes \gg^{\otimes n}$ when it is not equal to zero.
So \begin{tikzpicture}[xscale=0.5, yscale=0.5,baseline={([yshift=-.8ex]current bounding box.center)}]
\draw[line width=1pt] (0,0) -- (0,1) -- (1,2);
\draw[line width=1pt] (0.5,1.5) -- (0,2);
\draw[line width=1pt] (0,1) -- (-1,2);
\draw (0,2.5) node {$ 2$};
\end{tikzpicture}
is a short hand for\begin{tikzpicture}[xscale=0.5, yscale=0.5,baseline={([yshift=-.8ex]current bounding box.center)}]
\draw[line width=1pt] (0,0) -- (0,1) -- (1,2);
\draw[line width=1pt] (0.5,1.5) -- (0,2);
\draw[line width=1pt] (0,1) -- (-1,2);
\draw (1,2.5) node {$0$};
\draw (0,2.5) node {$ 2$};
\draw (-1,2.5) node {$0$};
\end{tikzpicture}.

The operations $ \vee,S,\Delta,\epsilon, \al$ defined on $ {\mathds{T}} $ have natural extensions
to ${\mathds{T}}^\gg $, that we denote by the same symbols. The extension $\Delta$ is still a coproduct with counit $\epsilon$, which is still compatible with $\vee $.
Moreover, the subspace $ {\mathcal I}^\gg = \oplus_{n \geq 3} {\mathcal I}_n \otimes \gg^{\otimes n} $,
(with $ {\mathcal I}_n $ being the subspace of ${\mathcal I}  \cap B_n $) is an ideal for $\vee$
 and a coideal for $\Delta$. It follows directly from Theorem \ref{thm:HopfAlgOnTrees} that the sextuple
$({\mathds{T}}^\gg/{\mathcal I}^\gg,\vee,\Delta \circ \al,S,\mathds{1},\epsilon) $ is an $(\al,id)$-Hom-Hopf algebra.
We call this Hom-Hopf algebra the \textbf{free Hom-associative algebra with generators in~$\gg$}.

We now consider the quotient of the latter Hom-Hopf algebra by the ideal $ {\mathcal J}^\gg$ of $({\mathds{T}}^\gg/{\mathcal I}^\gg,\vee,\mathds{1})$ generated by:
\begin{enumerate}
 \item[(i)] elements of the form \begin{tikzpicture}[xscale=0.4, yscale=0.4,baseline={([yshift=-.8ex]current bounding box.center)} ]
\draw[line width=1pt] (0,-0.5) -- (0,1.5) ;
\draw (0,1.8) node {$n$};
\draw (0,2.5) node {$x$};
\end{tikzpicture} - \begin{tikzpicture}[xscale=0.4, yscale=0.4,baseline={([yshift=-.8ex]current bounding box.center)}]
\draw[line width=1pt] (0,-0.5) -- (0,1.5) ;
\draw (0,2) node {$\al^n(x)$};
\end{tikzpicture},
(recall that for all $ y \in {\mathfrak g} $, \begin{tikzpicture}[xscale=0.4, yscale=0.4,baseline={([yshift=-.8ex]current bounding box.center)}]
\draw[line width=1pt] (0,-0.5) -- (0,1.5) ;
\draw (0,2) node {$y$};
\end{tikzpicture} is a short hand for \begin{tikzpicture}[xscale=0.4, yscale=0.4,baseline={([yshift=-.8ex]current bounding box.center)} ]
\draw[line width=1pt] (0,-0.5) -- (0,1.5) ;
\draw (0,1.95) node {$0$};
\draw (0,2.75) node {$y$};
\end{tikzpicture} )
 \item[(ii)] elements of the form \begin{tikzpicture}[xscale=0.4, yscale=0.4,baseline={([yshift=-.8ex]current bounding box.center)}]
\draw[line width=1pt] (0,0) -- (0,1) -- (1,2);
\draw[line width=1pt] (0,1) -- (-1,2);
\draw (-1,2.5) node {$x$};
\draw (1,2.5) node {$y$};
\end{tikzpicture} - \begin{tikzpicture}[xscale=0.4, yscale=0.4,baseline={([yshift=-.8ex]current bounding box.center)}]
\draw[line width=1pt] (0,0) -- (0,1) -- (1,2);
\draw[line width=1pt] (0,1) -- (-1,2);
\draw (-1,2.5) node {$y$};
\draw (1,2.5) node {$x$};
\end{tikzpicture} - \begin{tikzpicture}[xscale=0.4, yscale=0.4,baseline={([yshift=-.8ex]current bounding box.center)}]
\draw[line width=1pt] (0,-0.5) -- (0,1.5) ;
\draw (0,2) node {$[x,y]$};
\end{tikzpicture}.
\end{enumerate}

We first establish the following result.

\begin{prop}
The ideal $ {\mathcal J}^\gg$ is a Hom-Hopf ideal of
the free Hom-associative algebra with generators in $\gg$.
\end{prop}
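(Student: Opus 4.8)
The plan is to verify the four conditions that make ${\mathcal J}^\gg$ a Hom-Hopf ideal of $A:={\mathds{T}}^\gg/{\mathcal I}^\gg$: (a) ${\mathcal J}^\gg$ is a two-sided ideal for $\vee$; (b) $\al({\mathcal J}^\gg)\subseteq{\mathcal J}^\gg$; (c) ${\mathcal J}^\gg$ is a coideal, i.e.\ $\epsilon({\mathcal J}^\gg)=0$ and $\Delta({\mathcal J}^\gg)\subseteq{\mathcal J}^\gg\otimes A+A\otimes{\mathcal J}^\gg$; (d) $S({\mathcal J}^\gg)\subseteq{\mathcal J}^\gg$. Condition (a) holds by the very definition of ${\mathcal J}^\gg$. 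For (b), (c), (d) I would reuse the strategy of the proof of Lemma~\ref{lem:coideal2}: since $\Delta$ is compatible with $\vee$ (the extension to ${\mathds{T}}^\gg$ of Lemma~\ref{Deltavee}) and with $\al$ (the extension of \eqref{Deltaal}), since $\epsilon$ is multiplicative, since $\al$ is an algebra morphism, and since $S$ is a $\vee$-antimorphism commuting with $\al$, each of the properties ``$\epsilon$ vanishes on it'', ``it is $\al$-stable'', ``$\Delta$ sends it into ${\mathcal J}^\gg\otimes A+A\otimes{\mathcal J}^\gg$'' and ``it is $S$-stable'' propagates under left and right $\vee$-multiplication by elements of $A$ and under $\al$ (for the $\Delta$-bookkeeping one invokes Lemma~\ref{lem:coideal1}, and for the $S$- and $\Delta$-steps one uses the $\al$-stability, which should therefore be established first). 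It is then enough to check (b), (c), (d) on the two families of generators of ${\mathcal J}^\gg$: the elements $u_{n,x}$ of type~(i) (indexed by $x\in\gg$, $n\in{\mathbb N}$) and the elements $v_{x,y}$ of type~(ii) (indexed by $x,y\in\gg$).

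The easy verifications come first. The counit kills $u_{n,x}$ and $v_{x,y}$ because every tree appearing in them lies in some $B_k\otimes\gg^{\otimes k}$. For $\al$-stability, renormalising weights by means of the type-(i) relations gives $\al(u_{n,x})=u_{n+1,x}-u_{1,\al^n(x)}\in{\mathcal J}^\gg$; and reducing the two weight-one one-leaf factors of each cherry appearing in $\al(v_{x,y})$ modulo ${\mathcal J}^\gg$, together with the fact that $\al$ is an endomorphism of $\brr{\, , \,}$, yields $\al(v_{x,y})\equiv v_{\al(x),\al(y)}$ modulo ${\mathcal J}^\gg$, hence $\al(v_{x,y})\in{\mathcal J}^\gg$. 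For the antipode, the definition of $S$ on one-leaf trees gives $S(u_{n,x})=-u_{n,x}$ at once, while the antimorphism property together with the sign rule $S(|,a)=-(|,a)$ gives $S(v_{x,y})=-v_{x,y}$; so $S$ maps both families into ${\mathcal J}^\gg$.

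The only computation with any real content is the coproduct on $v_{x,y}$. For $u_{n,x}$, the coproduct of a one-leaf tree is primitive-like, so $\Delta(u_{n,x})=u_{n,x}\otimes{\mathds{1}}+{\mathds{1}}\otimes u_{n,x}\in{\mathcal J}^\gg\otimes A+A\otimes{\mathcal J}^\gg$. For $v_{x,y}$, write $\ell_z$ for the one-leaf tree of weight one decorated by $z\in\gg$. Expanding \eqref{def:coproduct} on the cherry decorated by $(x,y)$ produces, besides the two extreme terms, exactly the two ``cross'' terms $\ell_x\otimes\ell_y$ and $\ell_y\otimes\ell_x$, coming from the ordered splittings $(I,J)=(\{1\},\{2\})$ and $(\{2\},\{1\})$; the cherry decorated by $(y,x)$ produces the very same pair of cross terms, and the one-leaf tree decorated by $[x,y]$ produces only extreme terms. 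Hence in $\Delta(v_{x,y})$ the cross terms cancel and one is left with $\Delta(v_{x,y})=v_{x,y}\otimes{\mathds{1}}+{\mathds{1}}\otimes v_{x,y}$, i.e.\ the type-(ii) generators are primitive; in particular $\Delta(v_{x,y})\in{\mathcal J}^\gg\otimes A+A\otimes{\mathcal J}^\gg$. Combined with the propagation argument this gives the coideal property on all of ${\mathcal J}^\gg$, and completes the proof.

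I expect the only genuine obstacle to be organising the reduction-to-generators step carefully in this non-associative setting: one must make precise, in the spirit of Lemmas~\ref{lem:coideal1} and \ref{lem:coideal2}, that ``coideal'', ``$\al$-stable'' and ``$S$-stable'' are each preserved under left and right grafting with arbitrary elements of $A$ and under $\al$, so that the two short checks above on the two explicit families of generators really do suffice. Once that mechanism is in place, every individual verification is routine.
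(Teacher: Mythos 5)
Your proposal is correct and follows essentially the same route as the paper: reduce everything to the two families of generators via the compatibility of $\Delta$ with $\vee$ and the antimorphism property of $S$, then check that both generator types are primitive (with the cross terms of the two cherries cancelling) and are sent to their negatives by $S$. The extra verifications you include ($\epsilon$-vanishing and $\al$-stability, the latter being automatic since $\al(a)=\mathds{1}\vee a$ makes any two-sided $\vee$-ideal $\al$-stable) are harmless additions the paper leaves implicit.
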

\begin{proof}
Again, it suffices to show that the structural maps go to the quotient with respect to ${\mathcal J}^\gg $.
Since $\Delta $ and $\vee $ are compatible in the sense of equation (\ref{compatibility}), to show that $\Delta $ goes to the quotient suffices to show that
for all $ x,y \in \gg$, both
$ \Delta \left( \begin{tikzpicture}[xscale=0.4, yscale=0.4,baseline={([yshift=-.8ex]current bounding box.center)} ]
\draw[line width=1pt] (0,-0.5) -- (0,1.5) ;
\draw (0,1.8) node {$n$};
\draw (0,2.5) node {$x$};
\end{tikzpicture} - \begin{tikzpicture}[xscale=0.4, yscale=0.4,baseline={([yshift=-.8ex]current bounding box.center)}]
\draw[line width=1pt] (0,-0.5) -- (0,1.5) ;
\draw (0,2) node {$\al^n(x)$};
\end{tikzpicture} \right) $ and $\Delta \left( \begin{tikzpicture}[xscale=0.4, yscale=0.4,baseline={([yshift=-.8ex]current bounding box.center)}]
\draw[line width=1pt] (0,0) -- (0,1) -- (1,2);
\draw[line width=1pt] (0,1) -- (-1,2);
\draw (-1,2.5) node {$x$};
\draw (1,2.5) node {$y$};
\end{tikzpicture} - \begin{tikzpicture}[xscale=0.4, yscale=0.4,baseline={([yshift=-.8ex]current bounding box.center)}]
\draw[line width=1pt] (0,0) -- (0,1) -- (1,2);
\draw[line width=1pt] (0,1) -- (-1,2);
\draw (-1,2.5) node {$y$};
\draw (1,2.5) node {$x$};
\end{tikzpicture} - \begin{tikzpicture}[xscale=0.4, yscale=0.4,baseline={([yshift=-.8ex]current bounding box.center)}]
\draw[line width=1pt] (0,-0.5) -- (0,1.5) ;
\draw (0,2) node {$[x,y]$};
\end{tikzpicture} \right) $
are elements in
${\mathcal J}^\gg \otimes {\mathds{T}}^\gg/ {\mathcal{I}}^\gg + {\mathds{T}}^\gg/ {\mathcal{I}}^\gg\otimes {\mathcal J}^\gg$.
This follows from the following two computations, obtained by using directly the definition (\ref{def:coproduct}) of the coproduct :
$$\Delta\left( \begin{tikzpicture}[xscale=0.4, yscale=0.4,baseline={([yshift=-.8ex]current bounding box.center)} ]
\draw[line width=1pt] (0,-0.5) -- (0,1.5) ;
\draw (0,1.8) node {$n$};
\draw (0,2.5) node {$x$};
\end{tikzpicture} - \begin{tikzpicture}[xscale=0.4, yscale=0.4,baseline={([yshift=-.8ex]current bounding box.center)}]
\draw[line width=1pt] (0,-0.5) -- (0,1.5) ;
\draw (0,2) node {$\al^n(x)$};
\end{tikzpicture} \right)= \left( \begin{tikzpicture}[xscale=0.4, yscale=0.4,baseline={([yshift=-.8ex]current bounding box.center)} ]
\draw[line width=1pt] (0,-0.5) -- (0,1.5) ;
\draw (0,1.8) node {$n$};
\draw (0,2.5) node {$x$};
\end{tikzpicture} - \begin{tikzpicture}[xscale=0.4, yscale=0.4,baseline={([yshift=-.8ex]current bounding box.center)}]
\draw[line width=1pt] (0,-0.5) -- (0,1.5) ;
\draw (0,2) node {$\al^n(x)$};
\end{tikzpicture} \right)  \otimes  \mathds{1}  +  \mathds{1} \otimes \left( \begin{tikzpicture}[xscale=0.4, yscale=0.4,baseline={([yshift=-.8ex]current bounding box.center)} ]
\draw[line width=1pt] (0,-0.5) -- (0,1.5) ;
\draw (0,1.8) node {$n$};
\draw (0,2.5) node {$x$};
\end{tikzpicture} - \begin{tikzpicture}[xscale=0.4, yscale=0.4,baseline={([yshift=-.8ex]current bounding box.center)}]
\draw[line width=1pt] (0,-0.5) -- (0,1.5) ;
\draw (0,2) node {$\al^n(x)$};
\end{tikzpicture} \right) $$
and
$$\Delta \left( \begin{tikzpicture}[xscale=0.4, yscale=0.4,baseline={([yshift=-.8ex]current bounding box.center)}]
\draw[line width=1pt] (0,0) -- (0,1) -- (1,2);
\draw[line width=1pt] (0,1) -- (-1,2);
\draw (-1,2.5) node {$x$};
\draw (1,2.5) node {$y$};
\end{tikzpicture} - \begin{tikzpicture}[xscale=0.4, yscale=0.4,baseline={([yshift=-.8ex]current bounding box.center)}]
\draw[line width=1pt] (0,0) -- (0,1) -- (1,2);
\draw[line width=1pt] (0,1) -- (-1,2);
\draw (-1,2.5) node {$y$};
\draw (1,2.5) node {$x$};
\end{tikzpicture} - \begin{tikzpicture}[xscale=0.4, yscale=0.4,baseline={([yshift=-.8ex]current bounding box.center)}]
\draw[line width=1pt] (0,-0.5) -- (0,1.5) ;
\draw (0,2) node {$[x,y]$};
\end{tikzpicture} \right)= \left( \begin{tikzpicture}[xscale=0.4, yscale=0.4,baseline={([yshift=-.8ex]current bounding box.center)}]
\draw[line width=1pt] (0,0) -- (0,1) -- (1,2);
\draw[line width=1pt] (0,1) -- (-1,2);
\draw (-1,2.5) node {$x$};
\draw (1,2.5) node {$y$};
\end{tikzpicture} - \begin{tikzpicture}[xscale=0.4, yscale=0.4,baseline={([yshift=-.8ex]current bounding box.center)}]
\draw[line width=1pt] (0,0) -- (0,1) -- (1,2);
\draw[line width=1pt] (0,1) -- (-1,2);
\draw (-1,2.5) node {$y$};
\draw (1,2.5) node {$x$};
\end{tikzpicture} - \begin{tikzpicture}[xscale=0.4, yscale=0.4,baseline={([yshift=-.8ex]current bounding box.center)}]
\draw[line width=1pt] (0,-0.5) -- (0,1.5) ;
\draw (0,2) node {$[x,y]$};
\end{tikzpicture} \right) \otimes  \mathds{1}  +  \mathds{1} \otimes \left( \begin{tikzpicture}[xscale=0.4, yscale=0.4,baseline={([yshift=-.8ex]current bounding box.center)}]
\draw[line width=1pt] (0,0) -- (0,1) -- (1,2);
\draw[line width=1pt] (0,1) -- (-1,2);
\draw (-1,2.5) node {$x$};
\draw (1,2.5) node {$y$};
\end{tikzpicture} - \begin{tikzpicture}[xscale=0.4, yscale=0.4,baseline={([yshift=-.8ex]current bounding box.center)}]
\draw[line width=1pt] (0,0) -- (0,1) -- (1,2);
\draw[line width=1pt] (0,1) -- (-1,2);
\draw (-1,2.5) node {$y$};
\draw (1,2.5) node {$x$};
\end{tikzpicture} - \begin{tikzpicture}[xscale=0.4, yscale=0.4,baseline={([yshift=-.8ex]current bounding box.center)}]
\draw[line width=1pt] (0,-0.5) -- (0,1.5) ;
\draw (0,2) node {$[x,y]$};
\end{tikzpicture} \right).$$
The antipode $S$ being an antimorphism of $\vee $, it suffices to check that it preserves the set of generators of ${\mathcal J}^\gg $ to state that preserves ${\mathcal J}^\gg$.
This simply follows from the definition of $S$, since:
 $$ S\left( \begin{tikzpicture}[xscale=0.4, yscale=0.4,baseline={([yshift=-.8ex]current bounding box.center)} ]
\draw[line width=1pt] (0,-0.5) -- (0,1.5) ;
\draw (0,1.8) node {$n$};
\draw (0,2.5) node {$x$};
\end{tikzpicture} - \begin{tikzpicture}[xscale=0.4, yscale=0.4,baseline={([yshift=-.8ex]current bounding box.center)}]
\draw[line width=1pt] (0,-0.5) -- (0,1.5) ;
\draw (0,2) node {$\al^n(x)$};
\end{tikzpicture} \right) =  -\left( \begin{tikzpicture}[xscale=0.4, yscale=0.4,baseline={([yshift=-.8ex]current bounding box.center)} ]
\draw[line width=1pt] (0,-0.5) -- (0,1.5) ;
\draw (0,1.8) node {$n$};
\draw (0,2.5) node {$x$};
\end{tikzpicture} - \begin{tikzpicture}[xscale=0.4, yscale=0.4,baseline={([yshift=-.8ex]current bounding box.center)}]
\draw[line width=1pt] (0,-0.5) -- (0,1.5) ;
\draw (0,2) node {$\al^n(x)$};
\end{tikzpicture} \right) $$
and\footnote{Notice that this is the first time that we are using the skew-symmetry of the bracket $[.,.]$ on ${\mathfrak g}$.} $S\left( \begin{tikzpicture}[xscale=0.4, yscale=0.4,baseline={([yshift=-.8ex]current bounding box.center)}]
\draw[line width=1pt] (0,0) -- (0,1) -- (1,2);
\draw[line width=1pt] (0,1) -- (-1,2);
\draw (-1,2.5) node {$x$};
\draw (1,2.5) node {$y$};
\end{tikzpicture} - \begin{tikzpicture}[xscale=0.4, yscale=0.4,baseline={([yshift=-.8ex]current bounding box.center)}]
\draw[line width=1pt] (0,0) -- (0,1) -- (1,2);
\draw[line width=1pt] (0,1) -- (-1,2);
\draw (-1,2.5) node {$y$};
\draw (1,2.5) node {$x$};
\end{tikzpicture} - \begin{tikzpicture}[xscale=0.4, yscale=0.4,baseline={([yshift=-.8ex]current bounding box.center)}]
\draw[line width=1pt] (0,-0.5) -- (0,1.5) ;
\draw (0,2) node {$[x,y]$};
\end{tikzpicture} \right) = - \left( \begin{tikzpicture}[xscale=0.4, yscale=0.4,baseline={([yshift=-.8ex]current bounding box.center)}]
\draw[line width=1pt] (0,0) -- (0,1) -- (1,2);
\draw[line width=1pt] (0,1) -- (-1,2);
\draw (-1,2.5) node {$x$};
\draw (1,2.5) node {$y$};
\end{tikzpicture} - \begin{tikzpicture}[xscale=0.4, yscale=0.4,baseline={([yshift=-.8ex]current bounding box.center)}]
\draw[line width=1pt] (0,0) -- (0,1) -- (1,2);
\draw[line width=1pt] (0,1) -- (-1,2);
\draw (-1,2.5) node {$y$};
\draw (1,2.5) node {$x$};
\end{tikzpicture} - \begin{tikzpicture}[xscale=0.4, yscale=0.4,baseline={([yshift=-.8ex]current bounding box.center)}]
\draw[line width=1pt] (0,-0.5) -- (0,1.5) ;
\draw (0,2) node {$[x,y]$};
\end{tikzpicture} \right) .$

This completes the proof.
\end{proof}

\begin{defn}\label{def:Hom-Universal}
The {\bf universal enveloping algebra of a  multiplicative Hom-Lie algebra $(\gg, \brr{\, , \, }, \al)$}
is by definition the quotient of the free Hom-associative algebra with generators in $\gg$ (which is an $(\al,\id)$-Hom-Hopf algebra) by the Hom-Hopf ideal ${\mathcal J}^\gg $. This quotient is itself an $(\al,\id)$-Hom-Hopf algebra that we denote by ${\mathcal U}\gg $, while we keep the usual symbols for its structural maps.
\end{defn}

\begin{ex}
 For $\al=id$, Hom-Lie algebras are just Lie algebras. It is routine to check that the universal enveloping algebra of $(\gg, \brr{\, , \, }, id) $ coincides with the usual universal enveloping algebra.

For $\al=0$ and $[.,.]$ an arbitrary skew-symmetric map, all weighted trees are in the ideal ${\mathcal I}^{\mathfrak g}$ unless the weight of each leaf is $0$ and the universal enveloping algebra is obtained by applying the Schur functor to the algebra of all binary trees, then dividing the outcome by the ideal (for grafting) generated by
$$
 \begin{tikzpicture}[xscale=0.4, yscale=0.4,baseline={([yshift=-.8ex]current bounding box.center)}]
\draw[line width=1pt] (0,0) -- (0,1) -- (1,2);
\draw[line width=1pt] (0,1) -- (-1,2);
\draw (-1,2.5) node {$x$};
\draw (1,2.5) node {$y$};
\end{tikzpicture} - \begin{tikzpicture}[xscale=0.4, yscale=0.4,baseline={([yshift=-.8ex]current bounding box.center)}]
\draw[line width=1pt] (0,0) -- (0,1) -- (1,2);
\draw[line width=1pt] (0,1) -- (-1,2);
\draw (-1,2.5) node {$y$};
\draw (1,2.5) node {$x$};
\end{tikzpicture} - \begin{tikzpicture}[xscale=0.4, yscale=0.4,baseline={([yshift=-.8ex]current bounding box.center)}]
\draw[line width=1pt] (0,-0.5) -- (0,1.5) ;
\draw (0,2) node {$[x,y]$};
\end{tikzpicture}
$$
for all $x,y \in {\mathfrak g}$. This is of course not associative (it is by construction Hom-associative with respect to a map that satisfies $\al=0$ on (the image of) $\bigoplus_{n \geq 1}  B_n \otimes \gg^{\otimes n}$, which is not a strong constraint). Also, the coproduct is simply:
$ \Delta (\phi) = \phi \otimes {\mathds{1}}+ {\mathds{1}} \otimes \phi$ for all $ \phi \in \bigoplus_{n \geq 1}  B_n \otimes \gg^{\otimes n}$.
\end{ex}

\begin{rem}\label{rem:functor}
Any Hom-Lie algebra morphism $\psi:\gg \to \gg'$ induces a natural $(\al,\id)$-Hom-Hopf algebra morphism
${\mathcal U}\phi : {\mathcal U}\gg  \to {\mathcal U}\gg'$ by:
 $$ {\mathcal U}\phi : \phi \otimes (x_1 \otimes \dots \otimes x_n ) \mapsto \phi \otimes (\psi(x_1) \otimes \dots \otimes \psi(x_n) ) $$
for all weighted $n$-tree $\phi$ and $x_1, \dots,x_n \in \gg$.
Associating to a Hom-Lie algebra $\gg$ a universal  envelopping algebra ${\mathcal U}\gg$,
one therefore obtains a functor ${\mathcal U} $ from the category of Hom-Lie algebras to the category of
$(\al,\id)$-Hom-Hopf algebras.
\end{rem}

 Notice that for Hom-Lie algebras constructed by composition out of a Lie algebra $\gg $, equipped with a bracket $[.,.]_{Lie}$, through an endomorphism $\al$, there are two natural Hom-Hopf algebra structures:
\emph{(i)} the universal enveloping algebra of the Hom-Lie algebra $ (\gg,\al \circ [.,.],\al)$
as in Definiton \ref{def:Hom-Universal} and \emph{(ii)} the $(\tilde{\al},\id)$-twist of the universal enveloping algebra (in the usual sense) $U^{Lie}(\gg) $
 of the Lie algebra $ (\gg, [.,.]_{Lie})$
through the Hopf algebra morphism $\tilde{\al} $ associated to $\al $ (this is an $(\al,\id) $-Hopf-algebra by Example \ref{Hom-Hopf-Twist}).

These two Hom-Hopf algebra do not agree. This is easily seen when $\al=0$, for
the product in simply $0$ for the Hom-Hopf algebra of item \emph{(ii)} (since its product is $\mu_{\al}=\al \circ \mu$
with $\mu$ the product of $U^{Lie}(\gg)$), while it is not zero for
the universal enveloping algebra of the Hom-Lie algebra $ (\gg,\al \circ [.,.],\al)$.
For instance, the product
\begin{center}
\begin{tikzpicture}[xscale=0.4, yscale=0.4,baseline={([yshift=-.8ex]current bounding box.center)}]
\draw[line width=1pt] (0,-0.5) -- (0,1.5) ;
\draw (0,2) node {$x$};
\end{tikzpicture}
$\vee$
\begin{tikzpicture}[xscale=0.4, yscale=0.4,baseline={([yshift=-.8ex]current bounding box.center)}]
\draw[line width=1pt] (0,-0.5) -- (0,1.5) ;
\draw (0,2) node {$x$};
\end{tikzpicture}
\end{center}
does not vanish if $ x \in {\mathfrak g}$ a non-zero element. Notice that the spaces of which they are defined also do not coincide.

\subsection{Primitive elements on the free Hom-associative algebra with $1$-generator}

Besides the properties of primitive elements of an $(\al,\beta)$-Hom-Hopf algebra described in Remark \ref{rmk:SeveralPoints}, we aim to discuss the  case of $\mathds{T}/{\mathcal I}$.
By construction, $\al :\mathds{T} \to \mathds{T}$ is injective, and it is natural to ask
if its induced map $\al$ on the free Hom-associative algebra with $1$-generator $\mathds{T}/{\mathcal I}$ is injective as well. The answer is negative, in view of the next proposition, which introduces  a useful  element in building  counterexamples. It shows in particular that although all elements in $B_1$ (i.e. leaf weighted $1$-trees) are primitive for the coassociative coproduct (defined as in second item of Remark \ref{rmk:SeveralPoints}), the transpose is not true: there are primitive elements which are not in $B_1$.

\begin{prop}\label{prop:remarquable}
Consider the following element in  $\mathds{T}$:
$$
\begin{tikzpicture}[xscale=0.6, yscale=0.6, baseline={([yshift=-.8ex]current bounding box.center)}]
\draw[line width=1pt] (0,0.5) -- (0,1) -- (1,2);
\draw[line width=1pt] (0,1) -- (-1,2);
\draw[line width=1pt] (0.6,1.6) -- (0.2,2);
\draw[line width=1pt] (-0.6,1.6) -- (-0.2,2);
\draw (-1,2.3) node {$0$};
\draw (-0.2,2.3) node {$1$};
\draw (0.2,2.3) node {$1$};
\draw (1,2.3) node {$0$};
\end{tikzpicture} - \begin{tikzpicture}[xscale=0.6, yscale=0.6,baseline={([yshift=-.8ex]current bounding box.center)}]
\draw[line width=1pt] (0,0.5) -- (0,1) -- (1,2);
\draw[line width=1pt] (0.3,1.3) -- (-0.2,2);
\draw[line width=1pt] (0.07,1.6) -- (0.45,2);
\draw[line width=1pt] (0,1) -- (-1,2);
\draw (-1,2.3) node {$1$};
\draw (-0.2,2.3) node {$0$};
\draw (0.45,2.3) node {$0$};
\draw (1,2.3) node {$0$};
\end{tikzpicture}.
$$
Denote by $u$ its class in the free Hom-associative algebra with $1$-generator  $\mathds{T}/{\mathcal I}$.
Then $u \neq 0$ but $\al (u) =0$. Moreover, $u$ is a primitive element, and any element in the algebra (for $\vee$) generated by $u $ is primitive.
\end{prop}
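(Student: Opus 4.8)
The plan is to prove the four assertions of the proposition in turn: \emph{(i)} $\al(u)=0$; \emph{(ii)} $u\neq0$; \emph{(iii)} $u$ is primitive; \emph{(iv)} every element of the subalgebra (non-unital, for $\vee$) of $\mathds{T}/{\mathcal I}$ generated by $u$ is primitive. Throughout I write $\ell_k\in B_1$ for the leaf weighted $1$-tree of weight $k$, so that $\al(\ell_k)=\ell_{k+1}$, and I put $\tau_1=(\ell_0\vee\ell_1)\vee(\ell_1\vee\ell_0)$ and $\tau_2=\ell_1\vee((\ell_0\vee\ell_0)\vee\ell_0)$ for the two weighted $4$-trees in the statement, so that $u$ is the class of $\tau_1-\tau_2$. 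All computations are carried out in $\mathds{T}$ and then pushed to $\mathds{T}/{\mathcal I}$, using freely that $\varphi\vee\mathds{1}=\mathds{1}\vee\varphi=\al(\varphi)$, that $\al$ is a $\vee$-morphism (Lemma~\ref{alphavee}), that $\Delta$ is compatible with $\vee$ (Lemma~\ref{Deltavee}) and cocommutative (Lemma~\ref{lem:coproduct-properties}), and that ${\mathcal I}$ is both a $\vee$-ideal and a coideal (Lemma~\ref{lem:coideal2}).

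For \emph{(i)}: applying $\al$ gives $\al(\tau_1)=(\ell_1\vee\ell_2)\vee(\ell_2\vee\ell_1)$ and $\al(\tau_2)=\ell_2\vee((\ell_1\vee\ell_1)\vee\ell_1)$, and peeling off one grafting with the defining relation $(\phi\vee\psi)\vee\al(\chi)\equiv\al(\phi)\vee(\psi\vee\chi)$ (using $\ell_2\vee\ell_1=\al(\ell_1\vee\ell_0)$ on the first, $\ell_1=\al(\ell_0)$ on the second) yields $\al(\tau_1)\equiv\ell_2\vee(\ell_2\vee(\ell_1\vee\ell_0))\equiv\al(\tau_2)\pmod{\mathcal I}$, hence $\al(u)=0$. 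Assertion \emph{(ii)} is the real content of the proposition, and the step I expect to be the main obstacle: it amounts to $\tau_1-\tau_2\notin{\mathcal I}$. I would attack it by a direct analysis of ${\mathcal I}$ in leaf-degree $4$ — which, as ${\mathcal I}$ is generated by the elements $c(\phi,\psi,\chi)=(\phi\vee\psi)\vee\al(\chi)-\al(\phi)\vee(\psi\vee\chi)$, is spanned by the $c(\phi,\psi,\chi)$ with $|\phi|+|\psi|+|\chi|=4$ together with the one-sided multiples $\ell_a\vee c(\phi,\psi,\chi)$ and $c(\phi,\psi,\chi)\vee\ell_a$ with $|\phi|=|\psi|=|\chi|=1$ — checking that $\tau_1-\tau_2$ is not in their span; alternatively, if such a thing can be produced, by separating $\tau_1$ and $\tau_2$ in a concrete, \emph{genuinely non-associative} Hom-associative algebra $(A,\mu,\al_A)$ (i.e.\ finding $a\in A$ with $\mu(\mu(a,\al_A a),\mu(\al_A a,a))\neq\mu(\al_A a,\mu(\mu(a,a),a))$) and invoking the morphism $\mathds{T}/{\mathcal I}\to A$, $\ell_0\mapsto a$ of Remark~\ref{rem:universal} — note that a twist of an associative algebra cannot separate them, its product being associative.

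For \emph{(iii)}: since ${\mathcal I}$ is a coideal it suffices to show that $\Delta\tau_1-\Delta\tau_2-(\tau_1-\tau_2)\otimes\mathds{1}-\mathds{1}\otimes(\tau_1-\tau_2)$ lies in ${\mathcal I}\otimes\mathds{T}+\mathds{T}\otimes{\mathcal I}$. Sorting $\Delta\varphi=\sum_{I\sqcup J=\{1,\dots,4\}}\varphi_I\otimes\varphi_J$ by $|I|$: the $|I|\in\{0,4\}$ terms contribute exactly $(\tau_1-\tau_2)\otimes\mathds{1}+\mathds{1}\otimes(\tau_1-\tau_2)$; the $|I|=2$ parts of $\Delta\tau_1$ and of $\Delta\tau_2$ coincide already in $\mathds{T}\otimes\mathds{T}$ (both equal $2(\ell_1\vee\ell_2)\otimes(\ell_2\vee\ell_1)+2(\ell_2\vee\ell_1)\otimes(\ell_1\vee\ell_2)+(\ell_1\vee\ell_1)\otimes(\ell_2\vee\ell_2)+(\ell_2\vee\ell_2)\otimes(\ell_1\vee\ell_1)$), so they cancel; and in the $|I|=1$ part (and, by cocommutativity, symmetrically in $|I|=3$) the left tensor factors are the $1$-trees $\ell_2,\ell_3$ while the paired right tensor factors of $\tau_1$ and $\tau_2$ differ by one of $(\ell_0\vee\ell_1)\vee\ell_1-\ell_1\vee(\ell_1\vee\ell_0)$, $(\ell_1\vee\ell_1)\vee\ell_1-\ell_2\vee(\ell_1\vee\ell_0)$, $(\ell_0\vee\ell_1)\vee\ell_2-\ell_1\vee(\ell_1\vee\ell_1)$, each of which is $\pm c(\phi,\psi,\chi)$ with $\phi\in\{\ell_0,\ell_1\}$, $\psi=\ell_1$, $\chi\in\{\ell_0,\ell_1\}$, hence in ${\mathcal I}$; so the $|I|=1$ part lies in $\mathds{T}\otimes{\mathcal I}$. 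Adding these up gives $\Delta u=u\otimes\mathds{1}+\mathds{1}\otimes u$, i.e.\ $u$ is primitive.

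For \emph{(iv)}: the structural point is that ${\mathcal P}_0:={\rm Prim}(\mathds{T}/{\mathcal I})\cap\ker\al$ is a linear subspace closed under $\vee$. Indeed, if $\al(P)=\al(Q)=0$ and $\Delta P=P\otimes\mathds{1}+\mathds{1}\otimes P$, $\Delta Q=Q\otimes\mathds{1}+\mathds{1}\otimes Q$, then by compatibility of $\Delta$ with $\vee$ we expand $\Delta(P\vee Q)=\Delta P\vee\Delta Q$ as
\[
(P\vee Q)\otimes(\mathds{1}\vee\mathds{1})+(P\vee\mathds{1})\otimes(\mathds{1}\vee Q)+(\mathds{1}\vee Q)\otimes(P\vee\mathds{1})+(\mathds{1}\vee\mathds{1})\otimes(P\vee Q),
\]
and since $\mathds{1}\vee\mathds{1}=\mathds{1}$ while $P\vee\mathds{1}=\al(P)=0$ and $\mathds{1}\vee Q=\al(Q)=0$, the two middle terms vanish, leaving $\Delta(P\vee Q)=(P\vee Q)\otimes\mathds{1}+\mathds{1}\otimes(P\vee Q)$; moreover $\al(P\vee Q)=\al(P)\vee\al(Q)=0$. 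By \emph{(i)} and \emph{(iii)} we have $u\in{\mathcal P}_0$, so the subalgebra generated by $u$ is contained in ${\mathcal P}_0$, and in particular all of its elements are primitive. The only genuinely difficult step is \emph{(ii)}; everything else is a bounded computation with $\Delta$ and Hom-associativity.
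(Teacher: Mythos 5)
Your argument follows the paper's own proof step for step wherever the paper actually gives one, and is more explicit where the paper is terse. The peeling computation for $\al(u)=0$ is exactly the paper's appeal to Hom-associativity, written out; your verification that $u$ is primitive is a correct expansion of what the paper dismisses as ``a direct computation'' (the $|I|=2$ parts of $\Delta\tau_1$ and $\Delta\tau_2$ agree on the nose, the $|I|=1$ and $|I|=3$ parts differ by generators $c(\phi,\psi,\chi)$ with $\phi,\psi,\chi\in B_1$, so the defect of primitivity lies in ${\mathcal I}\otimes{\mathds{T}}+{\mathds{T}}\otimes{\mathcal I}$); and your closure argument for item \emph{(iv)} --- that primitive elements killed by $\al$ form a $\vee$-stable subspace because the cross terms reduce to $\al(P)\otimes\al(Q)=0$ --- is precisely the paper's.

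The one step you leave unexecuted is $u\neq 0$. You are right that it is the substantive claim, and you should know that the paper does not prove it either (it declares the non-membership of $\tau_1-\tau_2$ in ${\mathcal I}$ to be ``clear''), so you are not missing an idea the paper supplies; but your first strategy does close the gap cheaply, and it is worth recording why. In leaf-degree $4$ the ideal ${\mathcal I}$ is spanned by the generators $c(\phi,\psi,\chi)$ with $|\phi|+|\psi|+|\chi|=4$ together with $\ell_a\vee c(\ell_b,\ell_c,\ell_d)$ and $c(\ell_b,\ell_c,\ell_d)\vee\ell_a$ (left or right multiplication by $\mathds{1}$ only produces $\al$ of a generator, which is again a generator), and each of these is a difference of exactly two distinct basis trees of $B_4$. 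The balanced shape $(\,\cdot\vee\cdot\,)\vee(\,\cdot\vee\cdot\,)$ occurs in a generator only as $(\phi\vee\psi)\vee\al(\chi)$ with $|\chi|=2$, forcing its last two weights to be $\geq 1$, or as $\al(\phi)\vee(\psi\vee\chi)$ reversed, i.e.\ as $(\al\phi_1\vee\al\phi_2)\vee(\psi\vee\chi)$ with $|\phi|=2$, forcing its first two weights to be $\geq 1$; since $\tau_1$ has weights $(0,1,1,0)$ it satisfies neither, so $\tau_1$ appears in no generator and the coordinate functional dual to $\tau_1$ vanishes on ${\mathcal I}\cap\mathrm{span}(B_4)$ while taking the value $1$ on $\tau_1-\tau_2$. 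Your side remark that a twist of an associative algebra cannot separate $\tau_1$ from $\tau_2$ is also correct, and explains why the representation-theoretic alternative would require a genuinely non-associative Hom-associative algebra.
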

\begin{proof}
It is clear that the tree  that defines $u$ is not contained in $ {\mathcal I}$, which is therefore not equal to zero.
Also, $\al(u)= \begin{tikzpicture}[xscale=0.6, yscale=0.6, baseline={([yshift=-.8ex]current bounding box.center)}]
\draw[line width=1pt] (0,0.5) -- (0,1) -- (1,2);
\draw[line width=1pt] (0,1) -- (-1,2);
\draw[line width=1pt] (0.6,1.6) -- (0.2,2);
\draw[line width=1pt] (-0.6,1.6) -- (-0.2,2);
\draw (-1,2.3) node {$1$};
\draw (-0.2,2.3) node {$2$};
\draw (0.2,2.3) node {$2$};
\draw (1,2.3) node {$1$};
\end{tikzpicture} - \begin{tikzpicture}[xscale=0.6, yscale=0.6,baseline={([yshift=-.8ex]current bounding box.center)}]
\draw[line width=1pt] (0,0.5) -- (0,1) -- (1,2);
\draw[line width=1pt] (0.3,1.3) -- (-0.2,2);
\draw[line width=1pt] (0.07,1.6) -- (0.45,2);
\draw[line width=1pt] (0,1) -- (-1,2);
\draw (-1,2.3) node {$2$};
\draw (-0.2,2.3) node {$1$};
\draw (0.45,2.3) node {$1$};
\draw (1,2.3) node {$1$};
\end{tikzpicture}=0$, using Hom-associativity. By a direct computation, $\Delta(u)= u \otimes  \mathds{1}  + \mathds{1} \otimes  u$, i.e. $u$ is a primitive element.

As a consequence, $ u$ is a primitive element contained in the kernel of $ \al$.
Now, for every pair $ u_1,u_2$ of primitive elements contained in the kernel of $\al$,
$u_1 \vee u_2$ is a primitive element, as follows from  equation (\ref{compatibility}):
 \begin{eqnarray*}  \Delta (u_1 \vee  u_2 )  &=&  \Delta (u_1 )  \vee \Delta (u_2)  \\
 &=& (u_1 \otimes  \mathds{1}  +  \mathds{1}  \otimes  u_1) \otimes ( u_2 \otimes  \mathds{1}  +  \mathds{1}  \otimes  u_2)  \\
& = & (u_1 \vee u_2) \otimes  \mathds{1}  +  \mathds{1}  \otimes (u_1 \vee u_2)  \\
&   & + (  u_1 \vee  \mathds{1}  ) \otimes  ( \mathds{1}  \vee u_2 ) +( \mathds{1}  \vee u_2) \otimes (u_1 \vee  \mathds{1} ) \\
& = &  (u_1 \vee u_2) \otimes  \mathds{1}  +  \mathds{1}  \otimes (u_1 \vee u_2) \\
& &  +  \al(  u_1) \otimes  \al( u_2 ) + \al (u_2) \otimes \al (u_1) \\
 & =&   (u_1 \vee u_2) \otimes  \mathds{1}  +  \mathds{1}  \otimes (u_1 \vee u_2) \end{eqnarray*}
Moreover, Equation (\ref{alphavee}) implies that any element in the algebra generated by $u $ is in the kernel of $\al$. Altogether, these properties imply that the space of elements with are primitive and in the kernel of $\al$
is stable under $\vee$. In particular, every element in the algebra generated by $u$ is primitive.
\end{proof}

\begin{rem}
 In view of Remark \ref{rem:universal}, Proposition \ref{prop:remarquable} implies that for any Hom-associative algebra $(\A,\vee, \al)$, and any
 $x,y,z,t \in \A$, the element
  $$ \al(t) \vee ((x \vee y) \vee t) - (t\vee \al(x)) \vee (\al(y) \vee z)$$
is  in the kernel of $\al$.
\end{rem}

\subsection{Canonical $n$-ary operations on Hom-associative algebras}
Let $A$ be a Hom-associative algebra.
On  $ \A$, making the product of $n$ elements, $n \geq 3$  depends on the order in which the products are taken.  There is however a natural manner manner  to define $n$-ary operations $\A^{\otimes n} \to \A $, as we will see in the sequel. Recall that by Remark \ref{rem:universal}, operations of $n$ elements of $A$ are encoded by leaf weighted $n$-trees.

Given a  $n$-tree $ \varphi \in T_n $ (without weights) and an integer $n \leq k$,
we define a leaf weighted $n$-tree $\varphi[k] $ by assigning  to the leaf with label $i$ the integer
$ k - \ell (i)$ with $ \ell$ being the length of the branch from the root to the leaf. The length of the leaf $i$ is then the length of the path from the root to the leaf,
when the tree is seen as a graph.
For instance, let
$\varphi=\begin{tikzpicture}[xscale=0.4, yscale=0.4,baseline={([yshift=-.8ex]current bounding box.center)}]
\draw[line width=1pt] (0,1) -- (0,1.5) -- (-1.5,3) -- (-2.5,4);
\draw[line width=1pt] (-2,3.5) -- (-1.5,4);
\draw[line width=1pt] (-1.5,3) -- (-0.5,4);
\draw[line width=1pt] (0,1.5) -- (1.75,4) ;
\draw[line width=1pt] (1.25,3.25) -- (0.5,4);
\end{tikzpicture}
$ then
$\varphi[7]=\begin{tikzpicture}[xscale=0.4, yscale=0.4,baseline={([yshift=-.8ex]current bounding box.center)}]
\draw[line width=1pt] (0,1) -- (0,1.5) -- (-1.5,3) -- (-2.5,4);
\draw[line width=1pt] (-2,3.5) -- (-1.5,4);
\draw[line width=1pt] (-1.5,3) -- (-0.5,4);
\draw[line width=1pt] (0,1.5) -- (1.75,4) ;
\draw[line width=1pt] (1.25,3.25) -- (0.5,4);
\draw (-2.5,4.5) node {$3$};
\draw (-1.5,4.5) node {$3$};
\draw (-0.5,4.5) node {$4$};
\draw (0.5,4.5) node {$4$};
\draw (1.75,4.5) node {$4$};
\end{tikzpicture}
$ and $\varphi[5]=\begin{tikzpicture}[xscale=0.4, yscale=0.4,baseline={([yshift=-.8ex]current bounding box.center)}]
\draw[line width=1pt] (0,1) -- (0,1.5) -- (-1.5,3) -- (-2.5,4);
\draw[line width=1pt] (-2,3.5) -- (-1.5,4);
\draw[line width=1pt] (-1.5,3) -- (-0.5,4);
\draw[line width=1pt] (0,1.5) -- (1.75,4) ;
\draw[line width=1pt] (1.25,3.25) -- (0.5,4);
\draw (-2.5,4.5) node {$1$};
\draw (-1.5,4.5) node {$1$};
\draw (-0.5,4.5) node {$2$};
\draw (0.5,4.5) node {$2$};
\draw (1.75,4.5) node {$2$};
\end{tikzpicture}
$.

Let us call \textbf{right $n$-fern} the $n$-tree (without weights) obtained by successive graftings on the right of the $1$-tree and denote it by $F_n^r$, i.e. $$F_n^r= \left(\left(\dots \left( \left(\begin{tikzpicture}[xscale=0.3, yscale=0.3,baseline={([yshift=-.8ex]current bounding box.center)}]
\draw[line width=1pt] (0,0) -- (0,1.5) ;
\end{tikzpicture} \vee \begin{tikzpicture}[xscale=0.3, yscale=0.3,baseline={([yshift=-.8ex]current bounding box.center)}]
\draw[line width=1pt] (0,0) -- (0,1.5) ;
\end{tikzpicture}\right) \vee  \begin{tikzpicture}[xscale=0.3, yscale=0.3,baseline={([yshift=-.8ex]current bounding box.center)}]
\draw[line width=1pt] (0,0) -- (0,1.5) ;
\end{tikzpicture} \right) \vee \dots \vee  \begin{tikzpicture}[xscale=0.3, yscale=0.3,baseline={([yshift=-.8ex]current bounding box.center)}]
\draw[line width=1pt] (0,0) -- (0,1.5) ;
\end{tikzpicture} \right) \vee \begin{tikzpicture}[xscale=0.3, yscale=0.3,baseline={([yshift=-.8ex]current bounding box.center)}]
\draw[line width=1pt] (0,0) -- (0,1.5) ;
\end{tikzpicture}\right)= \begin{tikzpicture}[xscale=0.4, yscale=0.4,baseline={([yshift=-.8ex]current bounding box.center)}]
\draw[line width=1pt] (0,1) -- (0,1.5) -- (-1.5,3) -- (-2.5,4);
\draw[line width=1pt] (-2,3.5) -- (-1.5,4);
\draw[line width=1pt] (-1.5,3) -- (-0.5,4);
\draw[line width=1pt] (0,1.5) -- (2.2,4) ;
\draw[line width=1pt] (-0.3,1.75) -- (1.65,4) ;
\draw (0.4,4) node {$\dots$};
\end{tikzpicture}$$
Of course, the right $n$-fern is a fern. Similarly we  call \textbf{left $n$-fern} the $n$-tree (without weights) obtained by successive graftings on the left of the $1$-tree and denote it by $F_n^l$, i.e. $$F_n^l= \left( \begin{tikzpicture}[xscale=0.3, yscale=0.3,baseline={([yshift=-.8ex]current bounding box.center)}]
\draw[line width=1pt] (0,0) -- (0,1.5) ;
\end{tikzpicture} \vee \left( \begin{tikzpicture}[xscale=0.3, yscale=0.3,baseline={([yshift=-.8ex]current bounding box.center)}]
\draw[line width=1pt] (0,0) -- (0,1.5) ;
\end{tikzpicture} \vee \dots \vee \left(  \begin{tikzpicture}[xscale=0.3, yscale=0.3,baseline={([yshift=-.8ex]current bounding box.center)}]
\draw[line width=1pt] (0,0) -- (0,1.5) ;
\end{tikzpicture} \vee \left( \begin{tikzpicture}[xscale=0.3, yscale=0.3,baseline={([yshift=-.8ex]current bounding box.center)}]
\draw[line width=1pt] (0,0) -- (0,1.5) ;
\end{tikzpicture}  \vee   \begin{tikzpicture}[xscale=0.3, yscale=0.3,baseline={([yshift=-.8ex]current bounding box.center)}]
\draw[line width=1pt] (0,0) -- (0,1.5) ;
\end{tikzpicture} \right) \right) \dots \right) \right)
= \begin{tikzpicture}[xscale=0.4, yscale=0.4,baseline={([yshift=-.8ex]current bounding box.center)}]
\draw[line width=1pt] (0,1) -- (0,1.5) -- (1.5,3) -- (2.5,4);
\draw[line width=1pt] (2,3.5) -- (1.5,4);
\draw[line width=1pt] (1.5,3) -- (0.5,4);
\draw[line width=1pt] (0,1.5) -- (-2.2,4) ;
\draw[line width=1pt] (0.3,1.75) -- (-1.65,4) ;
\draw (-0.4,4) node {$\dots$};
\end{tikzpicture}$$

We first prove a lemma:

\begin{lem}\label{KFougeres}
 Let $k,n $ be non-negative integers with  $ n \leq k$. The identity
 $$
 F_n^l[k]= F_n^r[k]
 $$
holds in the free Hom-associative algebra with 1-generator  ${\mathds{T}}/{\mathcal I}  $.
\end{lem}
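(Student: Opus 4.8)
The idea is to connect $F_n^l[k]$ and $F_n^r[k]$ by a chain of rotations, each of which becomes, once the trees are decorated by the $[k]$-labelling, a single instance of the defining relation of $\mathcal{I}$.

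For $1\le a\le n-1$ set $b:=n-a$ and let $R_a$ be the binary $n$-tree $F_a^r\vee F_b^l$, the first $a$ leaves forming the right fern and the last $b$ leaves the left fern; note $R_{n-1}=F_n^r$ (since $F_1^l$ is the $1$-tree) and $R_1=F_n^l$. Because $F_a^r=F_{a-1}^r\vee|$ and $|\vee F_b^l=F_{b+1}^l$, performing the root rotation $(A\vee B)\vee C\mapsto A\vee(B\vee C)$ on $R_a$, with $A=F_{a-1}^r$, $B=|$ the middle leaf and $C=F_b^l$, produces $R_{a-1}$. A rotation of this shape changes leaf depths by $-1$ on the leaves of $A$, by $0$ on those of $B$ and by $+1$ on those of $C$, so it carries the $[k]$-decoration $R_a[k]$ exactly onto the $[k]$-decoration $R_{a-1}[k]$ (a leaf of weight $w$ having depth $k-w$). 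It therefore suffices to check that this passage from $R_a[k]$ to $R_{a-1}[k]$ is realised modulo $\mathcal{I}$, for each $a=n-1,n-2,\dots,2$.

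The one thing to verify is that in $R_a[k]$ every weight of the block $C=F_b^l$ is $\ge 1$. Indeed $F_b^l$ is a $b$-tree, so each of its leaves sits at depth $\le b$ inside $F_b^l$; grafting it under the new root of $R_a$ raises these depths by one, so every such leaf has depth $\le b+1$ in $R_a$ and hence weight $\ge k-(b+1)=k-n+a-1\ge 1$, using $a\ge 2$ and $k\ge n$. Now read $R_a[k]$ at the root as $(\Phi\vee\lambda)\vee\Xi$, where $\Phi$ is the $[k]$-decorated block $F_{a-1}^r$, $\lambda$ the decorated middle leaf and $\Xi$ the decorated block $F_b^l$; since all weights of $\Xi$ are $\ge 1$ we may write $\Xi=\al(\Xi')$. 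Then
\[
R_a[k]=(\Phi\vee\lambda)\vee\al(\Xi')=\al(\Phi)\vee(\lambda\vee\Xi')\pmod{\mathcal{I}},
\]
the middle step being precisely a generator $(\phi\vee\psi)\vee\al(\chi)-\al(\phi)\vee(\psi\vee\chi)$ of $\mathcal{I}$ with $\phi=\Phi$, $\psi=\lambda$, $\chi=\Xi'$; and by the depth bookkeeping above the right-hand side, whose underlying tree is $F_{a-1}^r\vee(|\vee F_b^l)=R_{a-1}$, is exactly $R_{a-1}[k]$.

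Chaining these congruences for $a=n-1,\dots,2$ gives $F_n^r[k]=R_{n-1}[k]=R_{n-2}[k]=\dots=R_1[k]=F_n^l[k]$ in $\mathds{T}/\mathcal{I}$, which is the claimed identity; for $n\le 2$ the two ferns are literally equal and there is nothing to prove. The substantive point — and the only place anything can go wrong — is the weight bookkeeping that keeps each intermediate tree a genuine $[k]$-decoration, and the hypothesis $n\le k$ is exactly what guarantees that every $C$-block met along the way lies in the image of $\al$, so that each rotation is a legitimate instance of Hom-associativity.
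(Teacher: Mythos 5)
Your proof is correct and is essentially the argument the paper has in mind: its one-line proof ("a finite induction using the definitions of these trees and Hom-associativity") is exactly your chain of root rotations $R_{n-1}[k]\to\cdots\to R_1[k]$, each realised by one generator of $\mathcal I$. Your added value is making explicit the weight bookkeeping showing that each $C$-block lies in the image of $\al$ — which is precisely where the hypothesis $n\le k$ enters — a point the paper leaves unstated.
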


\begin{proof}
This follows by a finite induction using   definitions of these trees and Hom-associativity.
\end{proof}
The following Lemma is straightforward.
\begin{lem}\label{lemmaKtoK-1}
Let  $\varphi_1 \in T_p, \varphi_2 \in T_q $ be two trees and $k$ an integer such that $k\geq p+q$.  Then
$$
(\varphi_1\vee \varphi_2)[k]= \varphi_1[k-1] \vee \varphi_2[k-1].
$$
\end{lem}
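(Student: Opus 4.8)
The plan is to prove the identity directly at the level of leaf weighted trees: unlike Lemma \ref{KFougeres}, no relation coming from $\mathcal{I}$ is needed here, and in fact both sides will turn out to be \emph{literally the same} leaf weighted $(p+q)$-tree, so that the equality already holds in $\mathds{T}$ and \emph{a fortiori} in $\mathds{T}/\mathcal{I}$. Thus the whole proof reduces to bookkeeping about how the lengths of the leaves change under a single grafting.

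The key observation I would record first is the effect of grafting on leaf lengths. By definition $\varphi_1 \vee \varphi_2$ is obtained by adjoining one new root and joining the roots of $\varphi_1$ and of $\varphi_2$ to it by two new edges; in particular the underlying tree of $(\varphi_1 \vee \varphi_2)[k]$ is $\varphi_1 \vee \varphi_2$, and for every leaf $i$ of $\varphi_1$ its length $\ell(i)$ measured in $\varphi_1 \vee \varphi_2$ equals $\ell_{\varphi_1}(i)+1$, where $\ell_{\varphi_1}(i)$ is the length of $i$ measured inside $\varphi_1$; symmetrically for the leaves of $\varphi_2$. I would also check that the three occurrences of the bracket $[\,\cdot\,]$ make sense: the hypothesis $k\geq p+q$ allows decorating $\varphi_1\vee\varphi_2\in T_{p+q}$ by $[k]$, and since $p,q\geq 1$ one has $k-1\geq p+q-1\geq p$ and likewise $k-1\geq q$, so $\varphi_1[k-1]$ and $\varphi_2[k-1]$ are defined.

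Finally I would compare the two leaf weighted trees. Grafting of leaf weighted trees only adds a new root and keeps all the weights, so the underlying tree of $\varphi_1[k-1]\vee\varphi_2[k-1]$ is again $\varphi_1\vee\varphi_2$, and both sides have the same underlying tree. On a leaf $i$ coming from $\varphi_1$, the left-hand side $(\varphi_1\vee\varphi_2)[k]$ carries the weight $k-\ell(i)=k-\bigl(\ell_{\varphi_1}(i)+1\bigr)$, while the right-hand side carries the weight that $\varphi_1[k-1]$ had already assigned there, namely $(k-1)-\ell_{\varphi_1}(i)$; these two integers coincide. The identical computation applies verbatim to the leaves coming from $\varphi_2$. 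Hence the two leaf weighted trees are equal. There is essentially no obstacle in this argument; the only two points deserving a line of care are the ``$+1$'' shift of leaf lengths produced by a single grafting and the inequalities $k-1\geq p$, $k-1\geq q$ making $[k-1]$ legitimate, both of which are immediate from $p,q\geq 1$ and $k\geq p+q$.
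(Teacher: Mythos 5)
Your proof is correct, and it is exactly the direct verification from the definitions that the paper has in mind when it declares the lemma ``straightforward'' and omits the proof: grafting adds one to every leaf length, so the weight $k-\bigl(\ell_{\varphi_j}(i)+1\bigr)$ on the left coincides with the weight $(k-1)-\ell_{\varphi_j}(i)$ on the right, and the two sides agree already in $\mathds{T}$. Your additional remarks (that no relation from $\mathcal{I}$ is needed, and that $k-1\geq p$, $k-1\geq q$ so the brackets are well defined) are accurate and a useful supplement to the paper's omitted argument.
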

Now, we state the main result of this section showing that the operations on Hom-associative algebras encoded by the weighted $n$-trees $\varphi[k]$ depends only on $n$ and $k$.
\begin{prop}
\label{prop:indifference}
 Let $k,n $ be two non-negative integers with  $ n \leq k$. For any $n$-trees $\varphi,\psi  \in T_n$, the identity  $\varphi[k] = \psi[k]$ holds in the free Hom-associative algebra with 1-generator  ${\mathds{T}}/{\mathcal I}  $.
\end{prop}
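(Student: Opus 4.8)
The plan is to reduce everything to a single normal form: I will show that $\varphi[k]=F_n^r[k]$ for \emph{every} $\varphi\in T_n$, and the proposition then follows at once since $\varphi[k]=F_n^r[k]=\psi[k]$. I would argue by strong induction on $n$. For $n=1$ this is immediate, because $T_1=\{\,|\,\}$ and $F_1^r=|$. For $n\ge 2$, write $\varphi=\varphi_1\vee\varphi_2$ with $\varphi_1\in T_p$, $\varphi_2\in T_q$, $p+q=n$, where $p,q,\varphi_1,\varphi_2$ are uniquely determined by $\varphi$. Lemma \ref{lemmaKtoK-1} applies since $n\le k$, and the inductive hypothesis applies to $\varphi_1,\varphi_2$ since $p,q\le n-1\le k-1$, so
\[
\varphi[k]=\varphi_1[k-1]\vee\varphi_2[k-1]=F_p^r[k-1]\vee F_q^r[k-1].
\]
It therefore remains to prove $F_p^r[k-1]\vee F_q^r[k-1]=F_{p+q}^r[k]$.

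This fern identity is the crux, and I would isolate it as a lemma: for all $p,q\ge 1$ and all integers $k\ge p+q$, one has $F_p^r[k-1]\vee F_q^r[k-1]=F_{p+q}^r[k]$ in ${\mathds{T}}/{\mathcal I}$. I would prove it by induction on $q$ (for all admissible $p,k$ simultaneously). If $q=1$, then $F_1^r=|$, and from $F_{p+1}^r=F_p^r\vee|$ and Lemma \ref{lemmaKtoK-1} we get $F_p^r[k-1]\vee|[k-1]=(F_p^r\vee|)[k]=F_{p+1}^r[k]$. If $q\ge 2$, use $F_q^r=F_{q-1}^r\vee|$ and Lemma \ref{lemmaKtoK-1} to write $F_q^r[k-1]=F_{q-1}^r[k-2]\vee|[k-2]$, and observe the two plain equalities of weighted trees $F_p^r[k-1]=\al(F_p^r[k-2])$ and $\al(|[k-2])=|[k-1]$. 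Then
\[
F_p^r[k-1]\vee F_q^r[k-1]=\al\!\left(F_p^r[k-2]\right)\vee\left(F_{q-1}^r[k-2]\vee|[k-2]\right),
\]
and a single application of the Hom-associativity relation generating ${\mathcal I}$, in the form $\al(\phi)\vee(\psi\vee\chi)=(\phi\vee\psi)\vee\al(\chi)$ with $\phi=F_p^r[k-2]$, $\psi=F_{q-1}^r[k-2]$, $\chi=|[k-2]$, rewrites the right-hand side as $\left(F_p^r[k-2]\vee F_{q-1}^r[k-2]\right)\vee|[k-1]$. By the inductive hypothesis for the pair $(p,q-1)$ with the integer $k-1\ge p+q-1$, the first factor equals $F_{p+q-1}^r[k-1]$, and one more use of Lemma \ref{lemmaKtoK-1} (with $F_{p+q}^r=F_{p+q-1}^r\vee|$) finishes: $F_{p+q-1}^r[k-1]\vee|[k-1]=F_{p+q}^r[k]$.

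The delicate point — and the step I expect to absorb most of the bookkeeping — is the fern lemma: one must check that every decorated tree occurring is legitimate (its decorating integer is at least its number of leaves, which is why $q\ge 2$ forces $p\le k-2$, and so on), that the two plain equalities $F_p^r[k-1]=\al(F_p^r[k-2])$ and $\al(|[k-2])=|[k-1]$ indeed hold with the paper's depth conventions, and that the $\mathcal I$-relation is invoked only at the outermost node, so that no rotation deep inside a tree is ever needed; everything else is just iterating Lemma \ref{lemmaKtoK-1}. Note that the statement being proved contains Lemma \ref{KFougeres} (take $\varphi=F_n^l$, $\psi=F_n^r$), so Lemma \ref{KFougeres} is not needed above; alternatively one could feed it in as the base case of the fern induction.
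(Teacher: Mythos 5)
Your proof is correct. It follows the paper's overall strategy — reduce every $\varphi[k]$ to the right-fern normal form $F_n^r[k]$ by strong induction on $n$, splitting $\varphi=\varphi_1\vee\varphi_2$ via Lemma \ref{lemmaKtoK-1} — but the crucial recombination step is handled by a genuinely different key lemma. The paper converts the second factor to a \emph{left} fern via Lemma \ref{KFougeres} and then migrates its leaves one at a time onto the right fern, through the chain $F^r_p[k-1]\vee F^l_q[k-1]=F^r_{p+1}[k-1]\vee F^l_{q-1}[k-1]=\cdots$, each step being one application of Hom-associativity at the outermost node. You instead keep both factors as right ferns and prove the self-contained concatenation identity $F_p^r[k-1]\vee F_q^r[k-1]=F_{p+q}^r[k]$ by induction on $q$, again using exactly one outermost Hom-associativity per step. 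What your route buys is that Lemma \ref{KFougeres} is never needed — it drops out as the special case $\varphi=F_n^l$, $\psi=F_n^r$ of the proposition — and the single place where the relation generating ${\mathcal I}$ is invoked is stated completely explicitly, with the weight bookkeeping ($q\geq 2$ forcing $p\leq k-2$ so that $F_p^r[k-2]$ is a legitimate weighted tree, $F_p^r[k-1]=\al(F_p^r[k-2])$, $\al(|[k-2])=|[k-1]$) all checked; these are precisely the points the paper leaves as "routine" or "using in each step Hom-associativity". The cost is that you prove slightly more than the minimum needed at each stage, but nothing is circular and all the side conditions you flag do hold under the hypothesis $n\leq k$.
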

\begin{proof}
It suffices to show that, for any tree $\varphi \in T_n$, we have $\varphi[k]= F_n^r[k]$. For $n=1,2,3$, it is routine to check that this identity is true (using Hom-associativity in case $n=3$). Let us suppose now that this equality holds for any tree in $T_p$ with $p< n$. Let  $\varphi$ be a tree in $T_n$ with $n>3$, then there exist $\varphi_1 \in T_p, \varphi_2 \in T_q $ such that $\varphi= \varphi_1 \vee \varphi_2$ and $p+q=n$. By Lemma \ref{lemmaKtoK-1} we have
$$
\varphi[k]= \varphi_1[k-1] \vee \varphi_2[k-1]
$$
and applying the hypothesis $\varphi_1[k-1]= F^r_p[k-1]$ and $\varphi_2[k-1]= F^r_q[k-1]=F^l_q[k-1]$, using also the Lemma \ref{KFougeres}. If $q=1$, then $\varphi[k]=F^r_p[k-1] \vee \begin{tikzpicture}[xscale=0.3, yscale=0.3,baseline={([yshift=-.8ex]current bounding box.center)}]
\draw[line width=1pt] (0,0) -- (0,1.5) ;
\draw (0,1.7) node {\tiny $k-2$};
\end{tikzpicture} = F^r_n[k] $, if not, using in each step Hom-associativity, we have  $\varphi[k]=F^r_p[k-1] \vee F^l_q[k-1] = F^r_{p+1}[k-1] \vee F^l_{q-1}[k-1]= F^r_{p+2}[k-1] \vee F^l_{q-2}[k-1]= \dots=F^r_{p+q-1}[k-1] \vee  \begin{tikzpicture}[xscale=0.3, yscale=0.3,baseline={([yshift=-.8ex]current bounding box.center)}]
\draw[line width=1pt] (0,0) -- (0,1.5) ;
\draw (0,1.7) node {\tiny $k-2$};
\end{tikzpicture}= F^r_{p+q}[k] = F^r_{n}[k]$.
\end{proof}

As a consequence, for every non-negative integers $k,n $  with  $ n \leq k$, we denote by  $\lfloor e^n \rfloor_k  $ the element  in  ${\mathds{T}}/{\mathcal I}  $, defined by  $\varphi[k] \in {\mathds{T}}/{\mathcal I} $ for an arbitrary $n$-tree $ \varphi \in T_n$. It is called the \textbf{$k$-weighted $n$-ary product}.

\begin{ex} $ \lfloor e^1 \rfloor_k= $\begin{tikzpicture}[xscale=0.4, yscale=0.4,baseline={([yshift=-.8ex]current bounding box.center)}]
\draw[line width=1pt] (0,0) -- (0,2);
\draw (0,2.5) node {$k-1$};
\end{tikzpicture} , $\lfloor e^2 \rfloor_k= $\begin{tikzpicture}[xscale=0.5, yscale=0.5,baseline={([yshift=-.8ex]current bounding box.center)}]
\draw[line width=1pt] (0,0) -- (0,1) -- (1,2);
\draw[line width=1pt] (0,1) -- (-1,2);
\draw (1,2.2) node {\scriptsize$k-2$};
\draw (-1,2.2) node {\scriptsize$k-2$};
\end{tikzpicture} , $\lfloor e^3 \rfloor_k=$ \begin{tikzpicture}[xscale=1, yscale=1,baseline={([yshift=-.8ex]current bounding box.center)}]
\draw[line width=1pt] (0,0) -- (0,1) -- (1,2);
\draw[line width=1pt] (0.5,1.5) -- (0,2);
\draw[line width=1pt] (0,1) -- (-1,2);
\draw (1,2.1) node {\scriptsize$k-3$};
\draw (0,2.1) node {\scriptsize$ k-3$};
\draw (-1,2.1) node {\scriptsize$ k-2$};
\end{tikzpicture}.
\end{ex}
\begin{lem}\label{lem:coproduct-explicit}
 For all non-negative integers $k,n,m $ with  $ n < k$ and $ m < k$, we have
\begin{enumerate}
\item
 $ \Delta ( \lfloor e^n \rfloor_k) = \sum_{i=0}^n
\left(
\begin{matrix}
  n   \\
i
\end{matrix}
\right)
 \lfloor e^i\rfloor_k \otimes \lfloor e^{n-i}\rfloor_{k} ,$
\item
   $ \lfloor e^n\rfloor_k \vee \lfloor e^m\rfloor _k = \lfloor e^{n+m}\rfloor_{k+1} =\al (\lfloor e^{n+m}\rfloor_k), $
   \item $S(\lfloor e^{n}\rfloor_{k} )=(-1)^n\lfloor e^{n}\rfloor_{k}.$
   \end{enumerate}
\end{lem}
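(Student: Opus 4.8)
The whole statement rests on one principle proved earlier: by Proposition \ref{prop:indifference} the element $\lfloor e^n\rfloor_k$ is represented in ${\mathds{T}}/{\mathcal I}$ by $\varphi[k]$ for \emph{any} $\varphi\in T_n$, so in each item I will pick whichever representative makes the computation transparent and then read off the explicit descriptions of the structure maps on ${\mathds{T}}$: $\al$ raises every leaf weight by $1$, $S$ reflects a tree through the vertical symmetry at each vertex and multiplies by $(-1)^n$ ($n$ the number of leaves), and $\Delta$ is the pruning formula \eqref{def:coproduct}. I would first prove item (2). Representing $\lfloor e^n\rfloor_k$ and $\lfloor e^m\rfloor_k$ by the right ferns $F_n^r[k]$ and $F_m^r[k]$, Lemma \ref{lemmaKtoK-1} applied to $F_n^r\in T_n$ and $F_m^r\in T_m$ (so that $F_n^r\vee F_m^r\in T_{n+m}$) gives $F_n^r[k]\vee F_m^r[k]=(F_n^r\vee F_m^r)[k+1]$, which equals $\lfloor e^{n+m}\rfloor_{k+1}$ by Proposition \ref{prop:indifference}. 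For the second equality, $\al$ acts on ${\mathds{T}}$ (hence on the quotient) by adding $1$ to every leaf weight, so $\al(\varphi[k])=\varphi[k+1]$ for every $\varphi\in T_{n+m}$, whence $\al(\lfloor e^{n+m}\rfloor_k)=\lfloor e^{n+m}\rfloor_{k+1}$. Throughout I read the hypotheses as requiring every symbol $\lfloor e^i\rfloor_j$ that appears to satisfy $i\le j$, and I set $\lfloor e^0\rfloor_k:={\mathds{1}}$.

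For item (1) I would argue combinatorially. Fix $\varphi\in T_n$, so $\lfloor e^n\rfloor_k=\varphi[k]$, and apply \eqref{def:coproduct}:
$$
\Delta(\lfloor e^n\rfloor_k)=\sum_{\substack{I\cup J=\{1,\dots,n\}\\ I\cap J=\emptyset}}(\varphi[k])_I\otimes(\varphi[k])_J .
$$
The heart of the matter is the identity $(\varphi[k])_I=\lfloor e^{|I|}\rfloor_k$. Forming $(\varphi[k])_I$ replaces every leaf outside $I$ by ${\mathds{1}}$ and then simplifies using $\chi\vee{\mathds{1}}={\mathds{1}}\vee\chi=\al(\chi)$ and ${\mathds{1}}\vee{\mathds{1}}={\mathds{1}}$; each such absorption deletes one internal vertex and therefore shortens the root-to-leaf path of every surviving leaf below it by $1$ while raising that leaf's weight by $1$. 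Hence the property that, for every leaf, its weight plus the length of its path to the root equals $k$ --- which holds for all leaves of $\varphi[k]$ --- persists for the remaining leaves, so $(\varphi[k])_I$ is either ${\mathds{1}}$ (if $I=\emptyset$) or a leaf weighted $|I|$-tree of the normalized form $\psi[k]$ with $\psi\in T_{|I|}$; Proposition \ref{prop:indifference} then identifies it with $\lfloor e^{|I|}\rfloor_k$. Grouping the sum by $|I|=i$ --- there being $\binom{n}{i}$ such subsets $I$, each forcing $|J|=n-i$ --- gives the stated binomial formula. Alternatively one can induct on $n$: item (2) gives $\lfloor e^n\rfloor_k=\lfloor e^{n-1}\rfloor_{k-1}\vee\lfloor e^1\rfloor_{k-1}$, and applying the compatibility of $\Delta$ with $\vee$ (Lemma \ref{Deltavee}), the primitivity of the leaf weighted $1$-tree $\lfloor e^1\rfloor_{k-1}$ (immediate from \eqref{def:coproduct}), item (2), the $\al$-formula, and Pascal's rule closes the induction.

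For item (3), represent $\lfloor e^n\rfloor_k$ by $\varphi[k]$ for an arbitrary $\varphi\in T_n$. By the description of $S$ recalled above, $S(\varphi[k])$ equals $(-1)^n$ times the tree obtained from $\varphi[k]$ by the vertical symmetry at every vertex. That reflection permutes the leaves but leaves every root-to-leaf path length unchanged and carries each weight along with its leaf; hence it sends $\varphi[k]$ to $\bar\varphi[k]$, where $\bar\varphi$ is the mirror tree, again an element of $T_n$. Proposition \ref{prop:indifference} gives $\bar\varphi[k]=\lfloor e^n\rfloor_k$, so $S(\lfloor e^n\rfloor_k)=(-1)^n\lfloor e^n\rfloor_k$.

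The one step that is not a formal consequence of Proposition \ref{prop:indifference} together with the explicit formulas for $\vee,\al,\Delta,S$ on ${\mathds{T}}$ is the sub-claim used in item (1): that pruning leaves and absorbing units turns $\varphi[k]$ into another normalized tree $\psi[k]$, i.e. preserves the relation "weight plus root-to-leaf path length equals $k$". This deserves to be written out carefully --- by a short induction on the number of pruned leaves, tracking how each unit absorption simultaneously removes one ancestor edge and adds $1$ to the weight --- after which items (1), (2) and (3) all follow immediately.
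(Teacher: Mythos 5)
Your proof is correct. I should point out that the paper itself states Lemma \ref{lem:coproduct-explicit} with no proof at all (it is immediately followed by the definition of the $k$-weighted realization), so there is no argument of the authors' to compare yours against; what you have written supplies exactly the missing justification. Your reduction of everything to Proposition \ref{prop:indifference} together with the explicit formulas for $\vee$, $\al$, $\Delta$ and $S$ on $\mathds{T}$ is the natural route, and you correctly isolate the one genuinely non-formal point, namely that pruning a normalized tree $\varphi[k]$ yields another normalized tree: each absorption $\chi\vee\mathds{1}=\mathds{1}\vee\chi=\al(\chi)$ removes one ancestor vertex from the root-to-leaf path of every surviving leaf below it while raising that leaf's weight by $1$, so the invariant ``weight plus depth equals $k$'' is preserved and $(\varphi[k])_I=\psi[k]=\lfloor e^{|I|}\rfloor_k$ for some $\psi\in T_{|I|}$ (with the convention $\lfloor e^0\rfloor_k=\mathds{1}$). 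Items (2) and (3) are immediate as you say, from Lemma \ref{lemmaKtoK-1} applied with $k+1$ and from the fact that the vertical reflection preserves all root-to-leaf path lengths. Your remark that the stated hypotheses $n<k$, $m<k$ do not by themselves guarantee that $\lfloor e^{n+m}\rfloor_{k+1}$ is defined, and that one must read the lemma as asserting each identity whenever all symbols occurring in it make sense, is a fair (and needed) reading of an imprecision in the statement itself, not a gap in your argument.
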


\

Given a formal power series in one variable with real coefficients $ f(\nu)= \sum_{i \geq 0}^\infty a_i \nu^i$, we denote by $  \widehat{f}_p (\nu)$ and call it  \textbf{ $k$-weighted  realization of $f$ } the element in  ${\mathds{T}}/{\mathcal I}[[\nu]]$ modulo $\nu^{p+1}$ given by:
\begin{equation}\label{ChapFnu}
\widehat{f}_p(\nu) =a_0 \mathds{1}+ \sum_{i \geq 1}^p a_i \nu^i  \lfloor e^i \rfloor_p.
\end{equation}
We call   the sequence $(\widehat{f}_p(\nu))_{p\in \mathbb{N}}$  the \textbf{   realization of $f$ } and denote it by $\widehat{f}(\nu)$.\\

We provide the following properties:

\begin{prop}\label{Prop:proprietiesWidehat}
Given two  formal power series in one variable with real coefficients $ f(\nu)= \sum_{i \geq 1}^\infty a_i \nu^i$,
$ g(\nu)= \sum_{i \geq 1}^\infty b_i \nu^i$. We have:
\begin{enumerate}
  \item $\widehat{f}(\nu)\vee \widehat{g}(\nu)=\al( \widehat{fg}(\nu)),$
  \item for all $k\in \mathbb{N}$, $\widehat{f}_{p+1}(\nu)=\al(\widehat{f}_p(\nu))$ modulo $\nu^{p+1}$,
  \item $S(\widehat{f}(\nu))=\widehat{f}(-\nu)$,
	\item the invertibility index of $\widehat{f}_p(\nu)$ is equal to $0$ for all $p \in {\mathbb N}$,
\end{enumerate}
where $\al$, $\vee$ and $S$ are the structure operations of ${\mathds{T}}/{\mathcal I} $ extended by $\mathbb{R}[[\nu]]$-linearity  to ${\mathds{T}}/{\mathcal I} [[\nu]]$.
\end{prop}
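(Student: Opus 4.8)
The plan is to reduce all four items to Lemma~\ref{lem:coproduct-explicit}, the defining formula (\ref{ChapFnu}), and the $\mathbb{R}[[\nu]]$-linearity of $\al$, $\vee$ and $S$; the only point requiring real care is the bookkeeping of $\nu$-degrees modulo $\nu^{p+1}$. I will use repeatedly the identity $\al(\lfloor e^n\rfloor_p)=\lfloor e^n\rfloor_{p+1}$, which is contained in Lemma~\ref{lem:coproduct-explicit}(2).

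First I would dispose of items (2) and (3), which are immediate. For (2), expand $\widehat{f}_{p+1}(\nu)=a_0\mathds{1}+\sum_{i=1}^{p+1}a_i\nu^i\lfloor e^i\rfloor_{p+1}$: the summand $i=p+1$ is a multiple of $\nu^{p+1}$ and hence vanishes modulo $\nu^{p+1}$, while for $i\leq p$ one has $\lfloor e^i\rfloor_{p+1}=\al(\lfloor e^i\rfloor_p)$, so $\mathbb{R}[[\nu]]$-linearity of $\al$ identifies the remaining sum with $\al(\widehat{f}_p(\nu))$. For (3), since $S$ is $\mathbb{R}[[\nu]]$-linear, $S(\mathds{1})=\mathds{1}$, and $S(\lfloor e^i\rfloor_p)=(-1)^i\lfloor e^i\rfloor_p$ by Lemma~\ref{lem:coproduct-explicit}(3), one reads off $S(\widehat{f}_p(\nu))=a_0\mathds{1}+\sum_{i\geq 1}a_i(-1)^i\nu^i\lfloor e^i\rfloor_p=\widehat{f}_p(-\nu)$.

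For item (1) I would expand, by bilinearity of $\vee$, the product $\widehat{f}_p(\nu)\vee\widehat{g}_p(\nu)=\sum_{i,j\geq 1}a_ib_j\,\nu^{i+j}\,(\lfloor e^i\rfloor_p\vee\lfloor e^j\rfloor_p)$ and reduce it modulo $\nu^{p+1}$: only the terms with $i+j\leq p$ survive, and for those $i<p$ and $j<p$, so Lemma~\ref{lem:coproduct-explicit}(2) applies to each of them and replaces $\lfloor e^i\rfloor_p\vee\lfloor e^j\rfloor_p$ by $\lfloor e^{i+j}\rfloor_{p+1}=\al(\lfloor e^{i+j}\rfloor_p)$. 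Collecting the terms of fixed total degree $n=i+j$ produces the Cauchy coefficient $c_n=\sum_{i+j=n}a_ib_j$ of the product series $fg$, so that $\widehat{f}_p(\nu)\vee\widehat{g}_p(\nu)\equiv\sum_n c_n\nu^n\lfloor e^n\rfloor_{p+1}$ modulo $\nu^{p+1}$, which, comparing with (\ref{ChapFnu}) applied to $fg$ and using $\al(\lfloor e^n\rfloor_p)=\lfloor e^n\rfloor_{p+1}$ once more, is exactly $\al(\widehat{fg}_p(\nu))$ modulo $\nu^{p+1}$.

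Finally, for item (4) I would invoke the structural results already available. By Proposition~\ref{prop:indifference}, $\lfloor e^n\rfloor_p$ equals the class of the weighted right fern $F_n^r[p]$, so it belongs to the space of ferns, and by Proposition~\ref{prop:invert-index-ferns} every element of that space satisfies (\ref{eq:antipodeK0}), i.e.\ has invertibility index $0$. Since each of the three members of (\ref{eq:antipodeK0}) depends linearly on $g$, the set of elements of invertibility index $0$ is a linear subspace (and it contains $\mathds{1}$); extending $\mathbb{R}[[\nu]]$-linearly, the combination $\widehat{f}_p(\nu)=a_0\mathds{1}+\sum_{i\geq 1}a_i\nu^i\lfloor e^i\rfloor_p$ again has invertibility index $0$. (The conceptual reason behind this is the vanishing of $\sum_{j=0}^{i}\binom{i}{j}(-1)^j$ for $i\geq 1$, which is what makes a direct verification of (\ref{eq:antipodeK0}) work as well.) I expect no serious obstacle: each assertion is essentially a transcription of Lemma~\ref{lem:coproduct-explicit}. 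The subtle point --- where a careless argument would fail --- is that $\widehat{f}_p(\nu)$ and $\lfloor e^n\rfloor_p$ are \emph{polynomial} representatives in $\mathds{T}/\mathcal{I}[\nu]$ considered modulo $\nu^{p+1}$, so in item (1) the product must be kept modulo $\nu^{p+1}$ in order that every appeal to Lemma~\ref{lem:coproduct-explicit}(2) --- which requires indices \emph{strictly} below the weight --- be legitimate, the offending terms being precisely those annihilated by the reduction.
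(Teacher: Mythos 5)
Your proposal is correct and follows essentially the same route as the paper: items (1)--(3) are read off from Lemma \ref{lem:coproduct-explicit} and the definition (\ref{ChapFnu}), and item (4) is obtained by representing $\lfloor e^i\rfloor_p$ by ferns via Proposition \ref{prop:indifference} and invoking Proposition \ref{prop:invert-index-ferns} together with linearity. Your version merely spells out the bookkeeping modulo $\nu^{p+1}$ that the paper leaves implicit.
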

\begin{proof}
The first assertion follows from the second item of  Lemma \ref{lem:coproduct-explicit}. The second one is obtained by considering the definition \eqref{ChapFnu} and the relation $ \lfloor e^{i}\rfloor_{p+1} =\al (\lfloor e^{i}\rfloor_p)$. The third one is a consequence of the third item of Lemma \ref{lem:coproduct-explicit}.
Let us prove the last assertion. Proposition \ref{prop:indifference} implies that
$$\widehat{f}_p(\nu) =a_0 \mathds{1}+ \sum_{i \geq 1}^p a_i \nu^i  \lfloor e^i \rfloor_p$$
can be represented by ferns. The conclusion then follows from Proposition \ref{prop:invert-index-ferns}.
\end{proof}



\section{A Hom-group integrating a Hom-Lie algebra}

In this section, we aim to associate to any Hom-Lie algebra a Hom-group.
This construction uses  the study of the universal enveloping algebra
and elements of group-like type.

\subsection{Group-like elements in the free Hom-associative algebra with $1$-generator}

For $g(\nu)= {\mathds 1}+\sum_{i=1}^\infty {g_i\nu^i}$ a formal group-like element, $g_1$ is primitive. Unlike for  the free associative algebra with $1$-generator, not any primitive element of ${\mathds{T}}/{\mathcal I}$  could be the first order element of a  formal group-like element with $g_0={\mathds 1}$.

\begin{prop}\label{prop:negativeGroupLike}
The $(\al,\id)$-Hom-Hopf algebra ${\mathds{T}}/{\mathcal I}[[\nu]]$ does not admit  formal group-like elements
of the form $g(\nu)= {\mathds 1}+\sum_{i=1}^\infty {g_i\nu^i}$ where  $g_1$ is the leaf weighted 1-tree $\begin{tikzpicture}[baseline={([yshift=-.8ex]current bounding box.center)}]
\draw[line width=1pt] (0,0) -- (0,0.4);
\draw (0,0.6) node {$0$};
\end{tikzpicture}$.
\end{prop}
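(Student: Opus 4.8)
The plan is to derive a contradiction already from the coefficient of $\nu^2$ in the group-like equation \eqref{eq:group-like}; in fact I would prove the stronger statement that no $g(\nu)\in{\mathds{T}}/{\mathcal I}[[\nu]]$ satisfying \eqref{eq:group-like} can have first-order term $g_1$ equal to the weight-$0$ leaf weighted $1$-tree $e$, which a fortiori rules out formal group-like elements of this shape (these living inside ${\mathds{T}}/{\mathcal I}[[\nu]]$).

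The first step is to record that ${\mathds{T}}/{\mathcal I}$ is graded by the number of leaves: assign $\mathds{1}$ degree $0$ and each leaf weighted $n$-tree degree $n$, so ${\mathds{T}}=\kk\mathds{1}\oplus\bigoplus_{n\ge1}\kk B_n$, and observe that $\vee$, $\al$, $\Delta$, $\epsilon$ and $S$ are homogeneous of degree $0$ — for $\Delta$ this is immediate from \eqref{def:coproduct}, since $\varphi_I$ has $|I|$ leaves. Each generator $(\phi\vee\psi)\vee\al(\chi)-\al(\phi)\vee(\psi\vee\chi)$ of ${\mathcal I}$ is homogeneous: it vanishes when one of $\phi,\psi,\chi$ is $\mathds{1}$ (Lemma \ref{alphavee}), and otherwise lies in $\kk B_{n+m+k}$ with $n+m+k\ge3$. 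Hence ${\mathcal I}$ is a graded ideal concentrated in degrees $\ge3$; in particular ${\mathcal I}\cap\kk B_1={\mathcal I}\cap\kk B_2=0$, so $B_1$ and $B_2$ inject into ${\mathds{T}}/{\mathcal I}$, the quotient inherits the grading, the degree-$(1,1)$ component of $({\mathds{T}}/{\mathcal I})^{\otimes 2}$ equals $\kk B_1\otimes\kk B_1$, and $\epsilon$ is just the projection onto the degree-$0$ line.

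Next, assuming $g(\nu)=\mathds{1}+\sum_{i\ge1}g_i\nu^i$ satisfies \eqref{eq:group-like} with $g_1=e$, I would expand $\Delta(g(\nu))=g(\nu)\otimes g(\nu)$ to order $\nu^2$ and use $g_0=\mathds{1}$ to obtain
\[
\Delta(g_2)-g_2\otimes\mathds{1}-\mathds{1}\otimes g_2=e\otimes e ,
\]
while $\epsilon(g(\nu))=1$ gives $\epsilon(g_2)=0$. I would then project this identity onto $\kk B_1\otimes\kk B_1$. Decomposing $g_2=\sum_{n\ge0}g_2^{(n)}$ into homogeneous components, $\epsilon(g_2)=0$ forces $g_2^{(0)}=0$; and since $\Delta$ is graded, for homogeneous $x$ of degree $n$ the reduced coproduct $\Delta(x)-x\otimes\mathds{1}-\mathds{1}\otimes x$ lies in $\bigoplus_{i+j=n,\ i,j\ge1}(\deg i)\otimes(\deg j)$, which meets $(\deg 1)\otimes(\deg 1)$ only when $n=2$. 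Thus only $g_2^{(2)}\in\kk B_2$ contributes; writing $g_2^{(2)}=\sum_{a,b\ge0}c_{ab}\,((|,a)\vee(|,b))$ and using \eqref{def:coproduct} (equivalently the computation in the proof of Lemma \ref{Deltavee}), the $(1,1)$-part of the left-hand side equals $\sum_{a,b}c_{ab}\big((|,a+1)\otimes(|,b+1)+(|,b+1)\otimes(|,a+1)\big)$.

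The concluding observation — the whole point of the argument — is then immediate: this element of $\kk B_1\otimes\kk B_1$ is a linear combination of the basis vectors $(|,p)\otimes(|,q)$ with $p,q\ge1$, whereas the right-hand side $e\otimes e=(|,0)\otimes(|,0)$ is the basis vector with $p=q=0$; comparing the coefficient of $(|,0)\otimes(|,0)$ yields $0=1$, a contradiction. The only point requiring care is that $g_2$ is an \emph{arbitrary} element of the quotient, possibly with components in every degree, which is precisely why the grading is invoked to isolate $g_2^{(2)}$; everything else is a one-line verification. (One may also note that the obstruction is exactly the vanishing weight: had $g_1$ been $\al(e)=(|,1)$ instead, the order-$\nu^2$ relation would be solved by $g_2=\tfrac12\,e\vee e$.)
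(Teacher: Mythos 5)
Your proof is correct and follows the same route as the paper: extract the coefficient of $\nu^2$, reduce to the equation $\Delta(g_2)-g_2\otimes\mathds{1}-\mathds{1}\otimes g_2 = g_1\otimes g_1$, and observe that the projection of the left-hand side onto $\kk B_1\otimes\kk B_1$ only involves leaf weighted $1$-trees of strictly positive weight, contradicting $g_1\otimes g_1=(|,0)\otimes(|,0)$. Your treatment is in fact slightly more careful than the paper's (which asserts "this imposes that $g_2$ is in $B_2$"): by checking that ${\mathcal I}$ is graded and concentrated in degrees $\ge 3$, you correctly isolate the degree-$2$ homogeneous component of an arbitrary $g_2$ rather than assuming $g_2\in\kk B_2$.
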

\begin{proof}
Assume that  $g(\nu)= {\mathds 1}+\sum_{i=1}^\infty {g_i\nu^i}$ is a formal group element. For example, the coefficient of $\nu^2$ in $\Delta (g(\nu))=g(\nu)\otimes g(\nu) $ yields
$$\Delta (g_2)= g_1\otimes g_1+ g_2\otimes {\mathds 1}+{\mathds 1}\otimes g_2.$$
This imposes that $g_2$ is in $B_2$ which is impossible, because the projection of $\Delta (g_2)$ on $B_1\otimes B_1$ is a linear combination of  elements of the form
\begin{tikzpicture}[xscale=0.4, yscale=0.4,baseline={([yshift=-.8ex]current bounding box.center)}]
\draw[line width=1pt] (0,-0.5) -- (0,1.5) ;
\draw (0,2) node {$k$};
\end{tikzpicture}
$\otimes$
\begin{tikzpicture}[xscale=0.4, yscale=0.4,baseline={([yshift=-.8ex]current bounding box.center)}]
\draw[line width=1pt] (0,-0.5) -- (0,1.5) ;
\draw (0,2) node {$l$};
\end{tikzpicture}
with $k$ or $l$ strictly positive.
\end{proof}
\begin{rem}
The proof of Proposition \ref{prop:negativeGroupLike} gives indeed that
 there is no $2$-order formal group-like elements of the form
$g(\nu)= {\mathds 1}+\sum_{i=1}^p {g_i\nu^i}$ where  $g_1$ is the leaf weighted 1-tree $\begin{tikzpicture}[baseline={([yshift=-.8ex]current bounding box.center)}]
\draw[line width=1pt] (0,0) -- (0,0.4);
\draw (0,0.6) node {$0$};
\end{tikzpicture}$.
\end{rem}

Let us show that formal group-like sequence is a relevant object by showing that the free Hom-associative algebra with $1$-generator admits a $1$-parameter family of formal group-like sequences, although it admits very few
group-like elements.

For all $s\in {\mathbb  R}$, consider the realization $\widehat{exp}(s)$ of the formal series $exp(s)=\sum_{i=0}^\infty{\frac{s^i}{i!}\nu^i}$. We   call the assignment $s \rightarrow \widehat{exp}(s)$  the \textbf{exponential sequence}.

For a better understanding of $\widehat{exp}(s)$, we give its first terms:
$$
\begin{array}{rcl}
\widehat{exp}_0(s)&=& {\mathds{ 1}} \\ \widehat{exp}_1(s) &=&{\mathds{ 1}} +  s \nu \begin{tikzpicture}[xscale=0.4, yscale=0.4,baseline={([yshift=-.8ex]current bounding box.center)}]
\draw[line width=1pt] (0,0) -- (0,2);
\draw (0,2.5) node {$0$};
\end{tikzpicture} \\
  \widehat{exp}_2(s) &=&{\mathds{ 1}} + s \nu \begin{tikzpicture}[xscale=0.4, yscale=0.4,baseline={([yshift=-.8ex]current bounding box.center)}]
\draw[line width=1pt] (0,0) -- (0,2);
\draw (0,2.5) node {$1$};
\end{tikzpicture}
+ \frac{s^2 \nu^2}{2!}
\begin{tikzpicture}[xscale=0.5, yscale=0.5,baseline={([yshift=-.8ex]current bounding box.center)}]
\draw[line width=1pt] (0,0) -- (0,1) -- (1,2);
\draw[line width=1pt] (0,1) -- (-1,2);
\draw (1,2.2) node {\scriptsize$0$};
\draw (-1,2.2) node {\scriptsize$0$};
\end{tikzpicture} \\
\widehat{exp}_3(s) &=&{\mathds{ 1}} + s \nu \begin{tikzpicture}[xscale=0.4, yscale=0.4,baseline={([yshift=-.8ex]current bounding box.center)}]
\draw[line width=1pt] (0,0) -- (0,2);
\draw (0,2.5) node {$2$};
\end{tikzpicture}
+ \frac{s^2 \nu^2}{2!}
\begin{tikzpicture}[xscale=0.5, yscale=0.5,baseline={([yshift=-.8ex]current bounding box.center)}]
\draw[line width=1pt] (0,0) -- (0,1) -- (1,2);
\draw[line width=1pt] (0,1) -- (-1,2);
\draw (1,2.2) node {\scriptsize$1$};
\draw (-1,2.2) node {\scriptsize$1$};
\end{tikzpicture}
+\frac{s^3 \nu^3}{3!}
 \begin{tikzpicture}[xscale=1, yscale=1,baseline={([yshift=-.8ex]current bounding box.center)}]
\draw[line width=1pt] (0,0) -- (0,1) -- (1,2);
\draw[line width=1pt] (0.5,1.5) -- (0,2);
\draw[line width=1pt] (0,1) -- (-1,2);
\draw (1,2.1) node {\scriptsize$0$};
\draw (0,2.1) node {\scriptsize$ 0$};
\draw (-1,2.1) node {\scriptsize$ 1$};
\end{tikzpicture}.
\end{array}
$$
Generally, we have
$$\widehat{exp}_p(s) =\mathds{1}+ \sum_{i = 1}^p \frac{ s^i}{i!} \nu^i   \lfloor e^i \rfloor_p .
$$

\begin{thm}
\label{theo:group-like_order_kappa}
 The exponential sequence $s \rightarrow \widehat{exp}(s)$ is  valued in
 the Hom-group $G_{seq}({\mathds T}/{\mathcal I}) $ of formal group-like sequence.\\
 Moreover,
 \begin{enumerate}
  \item  $ \widehat{exp}(0) $ is the unit element of the Hom-group of formal group-like sequences.
  \item  For all $s,t \in {\mathbb R}, k \in {\mathbb N}$,
     $ \widehat{exp}(s) \vee  \widehat{exp}(t) = \al(\widehat{exp}(s+t)) $.
  \item  $ S(\widehat{exp}(s)) = \widehat{exp}(-s)$ is a strict inverse, i.e.
   $$ \widehat{exp}(s) \vee  \widehat{exp}(-s) =   \widehat{exp}(-s) \vee \widehat{exp}(s) = \mathds{1} . $$
 \end{enumerate}
\end{thm}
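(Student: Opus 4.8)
The plan is to reduce everything to the properties of the realization map already established in Proposition \ref{Prop:proprietiesWidehat} and Lemma \ref{lem:coproduct-explicit}, so that the only point needing a direct computation is that each truncation $\widehat{exp}_p(s)$ is a $p$-order formal group-like element. I would begin by recording the two elementary facts about the scalar power series $exp(s)=\sum_{i\geq 0}\frac{s^i}{i!}\nu^i$ that will be used: $exp(s)\,exp(t)=exp(s+t)$ and $exp(s)(-\nu)=exp(-s)$, both in $\mathbb{R}[[\nu]]$. Next I would check that $\widehat{exp}(s)$ is a formal group-like sequence, i.e. satisfies (a), (b), (c) of Definition \ref{def:groupLikeAndTheLikes}(iii). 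Condition (b), namely $\widehat{exp}_{p+1}(s)=\al(\widehat{exp}_p(s))$ modulo $\nu^{p+1}$, is exactly Proposition \ref{Prop:proprietiesWidehat}(2); condition (c), boundedness of the invertibility indices, follows from Proposition \ref{Prop:proprietiesWidehat}(4), which gives in fact that they all vanish.

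The substantive step is condition (a): that $\widehat{exp}_p(s)=\mathds{1}+\sum_{i=1}^p\frac{s^i}{i!}\nu^i\lfloor e^i\rfloor_p$ is a $p$-order formal group-like element. The counit identity $\epsilon(\widehat{exp}_p(s))=1$ is immediate since $\epsilon(\mathds{1})=1$ and $\epsilon(\lfloor e^i\rfloor_p)=0$ for $i\geq 1$. For the coproduct I would apply Lemma \ref{lem:coproduct-explicit}(1) termwise and reorganize the double sum, using $\binom{n}{i}=\frac{n!}{i!(n-i)!}$ and the convention $\lfloor e^0\rfloor_p=\mathds{1}$:
\begin{align*}
\Delta\bigl(\widehat{exp}_p(s)\bigr)
&=\mathds{1}\otimes\mathds{1}+\sum_{n=1}^p\frac{s^n}{n!}\nu^n\sum_{i=0}^n\binom{n}{i}\lfloor e^i\rfloor_p\otimes\lfloor e^{n-i}\rfloor_p\\
&=\sum_{\substack{i,j\geq 0\\ i+j\leq p}}\frac{s^i}{i!}\,\frac{s^j}{j!}\,\nu^{i+j}\,\lfloor e^i\rfloor_p\otimes\lfloor e^j\rfloor_p .
\end{align*}
The right-hand side is precisely $\widehat{exp}_p(s)\otimes\widehat{exp}_p(s)$ after deleting all terms of total $\nu$-degree $>p$, i.e. it equals $\widehat{exp}_p(s)\otimes\widehat{exp}_p(s)$ modulo $\nu^{p+1}$. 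Hence $\widehat{exp}(s)\in G_{seq}(\mathds{T}/\mathcal{I})$.

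Then I would deduce the three additional assertions. For (1): putting $s=0$ annihilates every term with $i\geq 1$, so $\widehat{exp}(0)$ is the constant sequence $\mathds{1}$, which by Proposition \ref{prop:grouplike} is the unit of the Hom-group $G_{seq}(\mathds{T}/\mathcal{I})$. For (2): applying Proposition \ref{Prop:proprietiesWidehat}(1) to $f(\nu)=exp(s)$ and $g(\nu)=exp(t)$ gives $\widehat{exp}(s)\vee\widehat{exp}(t)=\al(\widehat{h}(\nu))$ with $h(\nu)=exp(s)\,exp(t)=exp(s+t)$, i.e. $\widehat{exp}(s)\vee\widehat{exp}(t)=\al(\widehat{exp}(s+t))$; alternatively one computes this level by level from $\lfloor e^n\rfloor_p\vee\lfloor e^m\rfloor_p=\al(\lfloor e^{n+m}\rfloor_p)$ of Lemma \ref{lem:coproduct-explicit}(2) together with the binomial expansion of $(s+t)^n$. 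For (3): Proposition \ref{Prop:proprietiesWidehat}(3) applied to $f(\nu)=exp(s)$ gives $S(\widehat{exp}(s))=\widehat{exp}(-s)$ since $exp(s)(-\nu)=exp(-s)$, and taking $t=-s$ in (2) gives $\widehat{exp}(s)\vee\widehat{exp}(-s)=\al(\widehat{exp}(0))=\al(\mathds{1})=\mathds{1}$; the reversed product $\widehat{exp}(-s)\vee\widehat{exp}(s)=\mathds{1}$ follows identically (or from cocommutativity of $\Delta$), so $S(\widehat{exp}(s))$ is a strict inverse, i.e. the invertibility index is $0$.

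I expect the main obstacle to be purely organizational: keeping track of the truncations ``modulo $\nu^{p+1}$'' uniformly over the levels $p$ of the sequence, and matching the binomial coefficients in the coproduct identity above; there is no delicate argument once Proposition \ref{Prop:proprietiesWidehat} and Lemma \ref{lem:coproduct-explicit} are in hand. One minor point worth flagging is that Proposition \ref{Prop:proprietiesWidehat} is stated for power series without constant term while $exp(s)$ has constant term $1$; its proof, however, applies verbatim to series with a constant term, the constant contributing only the harmless relations $\mathds{1}\vee x=\al(x)$ and $\mathds{1}\vee\mathds{1}=\mathds{1}$.
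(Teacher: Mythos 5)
Your proposal is correct and follows essentially the same route as the paper: conditions (b) and (c) of Definition \ref{def:groupLikeAndTheLikes}(iii) from items (2) and (4) of Proposition \ref{Prop:proprietiesWidehat}, the group-like property of each $\widehat{exp}_p(s)$ by the same termwise application of Lemma \ref{lem:coproduct-explicit}(1) and binomial reorganization, and items (1)--(3) from the remaining parts of Proposition \ref{Prop:proprietiesWidehat}. Your remark about the constant term in Proposition \ref{Prop:proprietiesWidehat} is a legitimate (and welcome) clarification that the paper glosses over, but it does not change the argument.
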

\begin{proof}
The first item just follows from the definition of the realization of the formal series $f(\nu)=e^{0\nu}=1$.
The second item follows from the first item in Proposition \ref{Prop:proprietiesWidehat} applied to the classical relation $e^{s \nu} e^{t \nu} = e^{(s+t)\nu}$. The third item follows from the third item in Proposition \ref{Prop:proprietiesWidehat}.

It remains to show that the exponential sequence is  valued in formal group-like sequence, defined in item (iii)
of Definition \ref{def:groupLikeAndTheLikes}. The fourth item in Proposition \ref{Prop:proprietiesWidehat} implies that $\widehat{exp}_p(s)$ (i.e. the $p$-th term in the sequence $\widehat{exp}(s)$) has invertibility index equal to $0$, so that condition c) holds.
The second item in Proposition \ref{Prop:proprietiesWidehat} implies item b) in
Definition \ref{def:groupLikeAndTheLikes}. We are left with the task of showing that
$\widehat{exp}_p(s)$ is a $p$-order group-like element. This follows from the following computation,
which is done modulo $\nu^{p+1}$:
\begin{eqnarray*} \Delta  (\widehat{exp}_p(s)) &=& \Delta(\mathds{1})+ \sum_{i = 1}^p \frac{ s^i}{i!} \nu^i  \Delta \lfloor e^i \rfloor_p \\
&=&  \mathds{1} \otimes \mathds{1}+ \sum_{i = 1}^p \sum_{j = 1}^i \frac{ s^i}{i!} \nu^i  \left( \begin{matrix}
  i   \\
j
\end{matrix}\right)  \lfloor e^{j} \rfloor_p \otimes
 \lfloor e^{i-j} \rfloor_p \\
&=&  \widehat{exp}_p(s) \otimes  \widehat{exp}_p(s),
\end{eqnarray*}
where the first item of Lemma \ref{lem:coproduct-explicit} was used to go from the first to the second line.
\end{proof}

\subsection{Formal group-like sequences of the universal enveloping algebra}

Now, we define the exponential map for a Hom-Lie algebra $(\gg,[\cdot,\cdot],\al)$, in order to achieve a construction of a functor from the category of Hom-Lie algebras to the category of Hom-groups.
 For all $x_1, \dots,x_i \in \gg$, and  $p,i \in \mathbb{N}$ with $ p \geq i $,
define an element in ${\mathcal U}\gg$ by:
$$ \lfloor x_1, \dots,x_i \rfloor_p = e^i_p \otimes (x_1 \otimes \dots \otimes x_i). $$
If $x_1= \dots=x_i=x$, then we denote this product as $\lfloor x^i\rfloor_p $.
For $f(\nu) = \sum a_i \nu^i$  a formal series, we call \textbf{realization of $f(\nu)$ evaluated at $x$} the sequence
 $$\left(\sum_{i=0}^p \frac{s^i}{i!} \lfloor x^i \rfloor_p \right)_{p \in {\mathbb N}}.$$
For $f=exp(\nu)$ in particular, we define
the exponential map $\widehat{exp}(sx)$ to be the sequence in ${\mathcal U}\gg[[\nu]]$
obtained by taking the realization evaluated at $x$ of the formal series $e^{s\nu}$.
By construction, $\widehat{exp}(sx)$ is obtained by applying the Schur construction to
$\widehat{exp}(s)$ and to the element $x$, and the following theorem
can be derived easily from Theorem \ref{theo:group-like_order_kappa}.

\begin{thm}
\label{theo:group-like_x}
 For all $x \in {\gg} $ the exponential sequence $s \rightarrow \widehat{exp}(sx)$ is  valued in
 the Hom-group $G_{seq}(\gg) $. Moreover,
 \begin{enumerate}
  \item  $ \widehat{exp}(0 x) $ is the unit element $\mathds{1} \in G_{seq}(\gg) $.
  \item  For all $s,t \in {\mathbb R}, k \in {\mathbb N}$,
     $ \widehat{exp}(sx) \vee  \widehat{exp}(tx) = \al(\widehat{exp}((s+t)x)) = \widehat{exp}((s+t)\al(x))$.
  \item  $ S(\widehat{exp}(sx)) = \widehat{exp}(-sx)$ is a strict inverse, i.e.
   $$ \widehat{exp}(sx) \vee  \widehat{exp}(-sx) =   \widehat{exp}(-sx) \vee \widehat{exp}(sx) = \mathds{1} . $$
 \end{enumerate}
\end{thm}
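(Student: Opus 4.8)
The plan is to deduce the theorem from Theorem \ref{theo:group-like_order_kappa} by transporting the exponential sequence along an evaluation morphism attached to $x$. Fix $x \in \gg$. Applying the Schur construction to the linear map $\kk \to \gg$, $1 \mapsto x$, and composing with the canonical projection onto ${\mathcal U}\gg$, one obtains a linear map
\[
\mathrm{ev}_x \colon {\mathds{T}}/{\mathcal I} \To {\mathcal U}\gg , \qquad \varphi \longmapsto \varphi \otimes (x \otimes \cdots \otimes x), \quad \mathds{1} \longmapsto \mathds{1},
\]
where $\varphi$ is a leaf weighted $n$-tree and $x$ appears $n$ times. The first step is to observe that $\mathrm{ev}_x$ is a morphism of $(\al,\id)$-Hom-Hopf algebras: it is a morphism of Hom-associative algebras because it is exactly the Schur construction applied to a Hom-associative algebra and one of its elements, in the sense of Remark \ref{rem:universal}; it intertwines the coproducts and the counits because $\Delta$ and $\epsilon$ are given on both sides by the same leaf-splitting formula \eqref{def:coproduct}; it intertwines the antipodes since $S$ obeys the same recipe on both sides; and it commutes with $\al$ since, on ${\mathds{T}}/{\mathcal I}$, the map $\al$ raises every leaf weight by one, while on ${\mathcal U}\gg$, by the relations of type (i) generating the ideal ${\mathcal J}^\gg$, raising the weight of a leaf carrying $y \in \gg$ amounts to replacing $y$ by $\al(y)$.

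Next, by the functoriality of $G_{seq}$ recorded in Remark \ref{rem:functorgroup}, $\mathrm{ev}_x$ (extended by $\mathbb R[[\nu]]$-linearity) induces a morphism of Hom-groups
\[
\underline{\mathrm{ev}_x} \colon G_{seq}({\mathds{T}}/{\mathcal I}) \To G_{seq}({\mathcal U}\gg) = G_{seq}(\gg).
\]
Comparing the formula $\widehat{exp}_p(s) = \mathds{1} + \sum_{i=1}^p \frac{s^i}{i!} \nu^i \lfloor e^i \rfloor_p$ of Theorem \ref{theo:group-like_order_kappa} with the definition $\lfloor x^i \rfloor_p = e^i_p \otimes (x \otimes \cdots \otimes x)$, one reads off that $\underline{\mathrm{ev}_x}(\widehat{exp}(s)) = \widehat{exp}(sx)$; this is precisely the meaning of the assertion that $\widehat{exp}(sx)$ is obtained by applying the Schur construction to $\widehat{exp}(s)$ and to $x$. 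Since $\widehat{exp}(s) \in G_{seq}({\mathds{T}}/{\mathcal I})$ by Theorem \ref{theo:group-like_order_kappa}, applying $\underline{\mathrm{ev}_x}$ yields $\widehat{exp}(sx) \in G_{seq}(\gg)$, which is the first assertion. Item (1) follows because a Hom-group morphism sends the unit to the unit; the first equality in item (2), $\widehat{exp}(sx) \vee \widehat{exp}(tx) = \al(\widehat{exp}((s+t)x))$, follows by applying $\underline{\mathrm{ev}_x}$ to the corresponding identity of Theorem \ref{theo:group-like_order_kappa}, since $\underline{\mathrm{ev}_x}$ intertwines $\vee$ and $\al$; and item (3) follows in the same way from the strict-inverse relations $\widehat{exp}(s)\vee\widehat{exp}(-s) = \widehat{exp}(-s)\vee\widehat{exp}(s) = \mathds{1}$ together with $S(\widehat{exp}(s)) = \widehat{exp}(-s)$.

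The only point that is not a direct transport is the second equality of item (2), namely $\al(\widehat{exp}((s+t)x)) = \widehat{exp}((s+t)\al(x))$, and I expect this to be the sole (and minor) obstacle. The plan is to prove the naturality identity $\al(\widehat{exp}(sy)) = \widehat{exp}(s\,\al(y))$ for every $y \in \gg$, which reduces term by term to the identity $\al(\lfloor y^i\rfloor_p) = \lfloor \al(y)^i\rfloor_p$ in ${\mathcal U}\gg$. This again follows from the relations of type (i) in ${\mathcal J}^\gg$: they allow an arbitrary leaf weight to be absorbed into the generator that leaf carries, and since ${\mathcal J}^\gg$ is a two-sided ideal this absorption may be carried out one leaf at a time inside any leaf weighted tree; hence $\al$, which raises every leaf weight by one, acts on $\lfloor y^i\rfloor_p = e^i_p \otimes (y \otimes \cdots \otimes y)$ by replacing each occurrence of $y$ with $\al(y)$. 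Combining this with the structural transport above establishes all the claims of the theorem.
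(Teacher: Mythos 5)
Your proof is correct and follows essentially the same route as the paper, which simply observes that $\widehat{exp}(sx)$ is obtained by applying the Schur construction to $\widehat{exp}(s)$ and $x$ and then derives the theorem from Theorem \ref{theo:group-like_order_kappa}; you merely make the transporting morphism $\mathrm{ev}_x$ and the functoriality of $G_{seq}$ (Remark \ref{rem:functorgroup}) explicit. Your additional verification of the identity $\al(\widehat{exp}((s+t)x)) = \widehat{exp}((s+t)\al(x))$ via the type (i) generators of ${\mathcal J}^\gg$ is a detail the paper leaves implicit, and it is handled correctly.
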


For a better understanding of $\widehat{exp}(sx)$, we give its first terms:
$$
\begin{array}{rcl}
\widehat{exp}_0(sx)&=& {\mathds{ 1}} \\ \widehat{exp}_1(sx) &=&{\mathds{ 1}} +  s \nu \begin{tikzpicture}[xscale=0.4, yscale=0.4,baseline={([yshift=-.8ex]current bounding box.center)}]
\draw[line width=1pt] (0,0) -- (0,2);
\draw (0,2.5) node {$x$};
\end{tikzpicture} \\
  \widehat{exp}_2(sx) &=&{\mathds{ 1}} + s \nu \begin{tikzpicture}[xscale=0.4, yscale=0.4,baseline={([yshift=-.8ex]current bounding box.center)}]
\draw[line width=1pt] (0,0) -- (0,2);
\draw (0,2.5) node {$\al(x)$};
\end{tikzpicture}
+ \frac{s^2 \nu^2}{2!}
\begin{tikzpicture}[xscale=0.5, yscale=0.5,baseline={([yshift=-.8ex]current bounding box.center)}]
\draw[line width=1pt] (0,0) -- (0,1) -- (1,2);
\draw[line width=1pt] (0,1) -- (-1,2);
\draw (1,2.2) node {\scriptsize$x$};
\draw (-1,2.2) node {\scriptsize$x$};
\end{tikzpicture} \\
\widehat{exp}_3(sx) &=&{\mathds{ 1}} + s \nu \begin{tikzpicture}[xscale=0.4, yscale=0.4,baseline={([yshift=-.8ex]current bounding box.center)}]
\draw[line width=1pt] (0,0) -- (0,2);
\draw (0,2.5) node {$\al^2(x)$};
\end{tikzpicture}
+ \frac{s^2 \nu^2}{2!}
\begin{tikzpicture}[xscale=0.5, yscale=0.5,baseline={([yshift=-.8ex]current bounding box.center)}]
\draw[line width=1pt] (0,0) -- (0,1) -- (1,2);
\draw[line width=1pt] (0,1) -- (-1,2);
\draw (1,2.2) node {\scriptsize$\al(x)$};
\draw (-1,2.2) node {\scriptsize$\al(x)$};
\end{tikzpicture}
+\frac{s^3 \nu^3}{3!}
 \begin{tikzpicture}[xscale=1, yscale=1,baseline={([yshift=-.8ex]current bounding box.center)}]
\draw[line width=1pt] (0,0) -- (0,1) -- (1,2);
\draw[line width=1pt] (0.5,1.5) -- (0,2);
\draw[line width=1pt] (0,1) -- (-1,2);
\draw (1,2.1) node {\scriptsize$x$};
\draw (0,2.1) node {\scriptsize$ x$};
\draw (-1,2.1) node {\scriptsize$ \al(x)$};
\end{tikzpicture}.
\end{array}
$$

An immediate consequence of the expression of $ \widehat{exp}_1(sx) $ is the next proposition, that we invite the reader to see as saying that $G_{seq}(\gg)$ is large enough to be meaningful.
\begin{prop}\label{prop:expinjective}
For every $s \neq 0$, the assignment
$x \mapsto \widehat{exp}(sx)$
is an injection from $\gg$ to the Hom-group of formal group-like sequences $G_{seq}(\gg)$.
\end{prop}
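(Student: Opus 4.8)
The plan is to read off injectivity from the very first term of the exponential sequence, which is what the phrase ``immediate consequence of the expression of $\widehat{exp}_1(sx)$'' alludes to. Recall that, by Definition~\ref{def:groupLikeAndTheLikes}, an element of $G_{seq}(\gg)$ is a \emph{sequence} $(g_p(\nu))_{p\in\mathbb{N}}$ of elements of ${\mathcal U}\gg[\nu]$, and two such elements are equal precisely when they agree term by term. Hence it is enough to prove that already the assignment $x\mapsto\widehat{exp}_1(sx)$, the $p=1$ term of $\widehat{exp}(sx)$, is injective on $\gg$.

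First I would write $\widehat{exp}_1(sx)$ explicitly, using the description of $\widehat{exp}(sx)$ from Theorem~\ref{theo:group-like_x}: since $\lfloor e^1\rfloor_1$ is the weight-$0$ one-leaf tree, one has $\widehat{exp}_1(sx)=\mathds{1}+s\nu\,\lfloor x\rfloor_1$, where $\lfloor x\rfloor_1=e^1_1\otimes x$ is the class in ${\mathcal U}\gg$ of the weight-$0$ one-leaf tree carrying $x$, i.e.\ the image of $x$ under the canonical map $\iota\colon\gg\to{\mathcal U}\gg$. Comparing coefficients of $\nu$ in $\widehat{exp}_1(sx)=\widehat{exp}_1(sy)$ then gives $s\,\iota(x)=s\,\iota(y)$ in ${\mathcal U}\gg$, and since $s\neq0$ (and $\mathbb{R}$ is a field) we conclude $\iota(x)=\iota(y)$.

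The argument is therefore complete once one knows that $\iota$ is injective, which is the sole non-formal point --- the Hom-Lie analogue of the Poincar\'e--Birkhoff--Witt embedding $\gg\hookrightarrow U(\gg)$. I would derive it from the universal property implicit in Definition~\ref{def:Hom-Universal} (and anticipated in Remark~\ref{rem:universal}): for every Hom-associative algebra $(A,m,\beta)$ and every Hom-Lie morphism $\phi\colon\gg\to(A,[\cdot,\cdot]_m,\beta)$ there is a unique Hom-associative morphism $\bar\phi\colon{\mathcal U}\gg\to A$ with $\bar\phi\circ\iota=\phi$, because the generators of ${\mathcal I}^\gg$ and ${\mathcal J}^\gg$ are exactly what one needs in order that the tautological ``evaluate the weighted trees inside $A$'' assignment descend to the quotient. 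Consequently $\ker\iota$ lies in the kernel of every such $\phi$, so it suffices to exhibit, for each nonzero $z\in\gg$, one Hom-associative algebra together with one Hom-Lie morphism not killing $z$; for instance, when $\al=0$ the triple $(\gg,\tfrac12[\cdot,\cdot],0)$ is Hom-associative (Hom-associativity being vacuous there) and $\id_\gg$ is such a morphism, so $\iota$ is injective, whereas for general $\al$ one argues as in the classical PBW theorem. I expect this last step --- the PBW-type embedding $\gg\hookrightarrow{\mathcal U}\gg$ --- to be the real obstacle; granting it, the proposition is precisely the immediate observation about $\widehat{exp}_1(sx)$ described above, and in particular $x\mapsto\widehat{exp}(sx)$ is injective.
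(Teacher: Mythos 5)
Your main step --- reading $x$ off the coefficient of $\nu$ in $\widehat{exp}_1(sx)=\mathds{1}+s\nu\,\iota(x)$, where $\iota\colon\gg\to{\mathcal U}\gg$ is the canonical map sending $x$ to the class of the weight-zero one-leaf tree decorated by $x$ --- is exactly the argument the paper intends: the paper offers nothing beyond the remark that the proposition is ``an immediate consequence of the expression of $\widehat{exp}_1(sx)$''. You are also right, and more careful than the paper, in observing that this only \emph{reduces} the proposition to the injectivity of $\iota$, a PBW-type statement that the paper never states, let alone proves. Note that the reduction is in fact an equivalence: since every term of $\widehat{exp}(sx)$ is obtained from $\iota(x)$ by applying $\al$ and $\vee$ (using $e_n(x)=e_0(\al^n x)$ in ${\mathcal U}\gg$), if $\iota(x)=\iota(y)$ then $\widehat{exp}(sx)=\widehat{exp}(sy)$, so the proposition stands or falls with the injectivity of $\iota$.

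The gap is in your treatment of that last step. The universal property you invoke is plausible (and routine to verify on the generators of ${\mathcal I}^\gg$ and ${\mathcal J}^\gg$), but the separating Hom-Lie morphism it calls for is only exhibited when $\al=0$; for general $\al$, ``one argues as in the classical PBW theorem'' is not an argument. The classical proofs do not transfer directly: the ideal ${\mathcal J}^\gg$ is inhomogeneous for the grading by number of leaves (its type-(ii) generators mix degrees $2$ and $1$), so the degree-one part of ${\mathcal U}\gg$ is not visibly $\gg$; and producing a Hom-associative algebra $A$ with an injective Hom-Lie morphism $\gg\to A$ is precisely the content of what must be proved. The cheap argument yields only partial information: quotienting ${\mathcal U}\gg$ by the span of all classes of trees with at least two leaves (a two-sided ideal) leaves $\kk\mathds{1}\oplus\gg/\brr{\gg,\gg}$, because the type-(i) relations identify the weight-$n$ one-leaf tree decorated by $x$ with $\iota(\al^n x)$ and the surviving part of the type-(ii) relations then kills exactly $\brr{\gg,\gg}$; this shows $\ker\iota\subseteq\brr{\gg,\gg}$ and no more. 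So your write-up correctly locates the missing ingredient but does not supply it --- a gap it shares with, and indeed inherits from, the paper itself.
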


In order to integrate a Hom-Lie algebra into a Hom-group, we compose the following two functors:
\begin{enumerate}
\item The functor $ {\mathcal U} $ from the  category of Hom-Lie algebras to the category of
Hom-Hopf algebras which consists in assigning  to a Hom-Lie algebra $\gg$
its universal algebra ${\mathcal U}\gg$, as in Remark \ref{rem:functor}.
\item The functor $G_{seq}$ from the category of Hom-Hopf algebras to the category of Hom-groups,
which consists in assigning  to a Hom-Hopf algebra $A$ its formal group-like sequences,
as in Remark \ref{rem:functorgroup}.
\end{enumerate}
Therefore the composition of these functors is a functor ${\mathfrak G}$ from the category of Hom-Lie algebras to the category of Hom-groups. Proposition \ref{prop:expinjective} implies that this functor is not trivial. Notice that it is compatible with the exponential map in the sense that  for every morphism of Hom-Lie algebra $\varphi:  \gg \to \gg'$,
the following diagram
 $$ \xymatrix{ \gg \ar[r]^{\varphi} \ar[d]^{  \widehat{exp}(s \cdot) }&  \gg' \ar[d]^{  \widehat{exp}(s \cdot) }\\ {\mathfrak G} (\gg) \ar[r]^{{\mathfrak  G}(\varphi)}& {\mathfrak G}(\gg').} $$
is commutative.

\end{document}